\documentclass[12 pt, hidelinks, leqno]{amsart}

\usepackage[utf8]{inputenc} % Required for inputting international UTF-8 characters
\usepackage[T1]{fontenc} % Output font encoding for international characters
\usepackage{babel} % Language package (language is defined in documentclass)

\usepackage{amsmath} % Create matrices, subequations and more
\usepackage{amssymb} % Math font to create symbols from characters
\usepackage{amsthm}
\usepackage{esint} %only for the mean value integral
\usepackage{mathtools} %only for the \coloneq
\usepackage[heavycircles]{stmaryrd} %for the Kulkarni-Nomizu product
\newcommand{\diff}{\mathrm{d}}
\newcommand{\R}{\mathbf{R}}

\newcommand{\N}{\mathbf{N}}
\newcommand{\Sph}{\mathbf{S}}

\newcommand{\gdo}[1]{\mathrm{O}\left( #1 \right)}

\usepackage[top=2.3cm, bottom=2cm, left=2.2cm, right=2.2cm]{geometry}
\usepackage[style=alphabetic]{biblatex}
\usepackage{csquotes} %ensure that references appear correctly when using biblatex

\usepackage[colorlinks = true,
linkcolor = red,
urlcolor  = black,
citecolor = red,
anchorcolor = black]{hyperref}
\usepackage{cleveref}
\usepackage{thmtools}

\newtheorem{theorem}{Theorem}[section]
\newtheorem{proposition}[theorem]{Proposition}
\newtheorem{lemma}[theorem]{Lemma}
\newtheorem{corollary}[theorem]{Corollary}
\newtheorem{definition}[theorem]{Definition}
\newtheorem{notation}[theorem]{Notation}
\theoremstyle{remark}
\newtheorem{remark}[theorem]{Remark}

\crefname{hyp}{hypothesis}{hypotheses}
\crefname{equation}{}{}

\title{Pohozaev identities and bubbling obstruction for Yang-Mills fields in conformal dimension}
\author{Mario Gauvrit}
\address{Mario Gauvrit
	\newline
	Universit\'e Paris Cit\'e and Sorbonne Universit\'e, CNRS, IMJ-PRG, F-75013 Paris, France.}
\email{mario.gauvrit@imj-prg.fr}
\date{}

\begin{document}
	
	\maketitle

 		\begin{abstract}
 	We study bubbling for sequences of Yang–Mills connections on closed four-manifolds and we derive a compatibility condition of Pohozaev-type between the weak limit connection and the bubble formed at a concentration point, involving the Weyl curvature of the background metric. This yields obstructions to bubbling extending earlier results of Yin \cite{yin_blow-up_2023} beyond the locally conformally flat case. As an application, we rule out certain bubbling configurations on $\mathbf{CP}^2$. 
 	\end{abstract}
	
\section*{Introduction}
The study of compactness properties of Yang–Mills connections originates in the foundational work of Uhlenbeck \cite{UhlenbeckKarenK1982CwLb}. This framework plays a central role in the analysis of the moduli space of (anti-)self-dual connections, in particular in four-dimensional geometry where they arise as absolute minimizers of the Yang–Mills functional.

A major development in this direction was achieved by Donaldson \cite{Donaldson}, who used these compactness results to construct a compactification of the moduli space of instantons with topological charge $1$. In this setting, bubbling phenomena are completely understood: sequences of instantons may degenerate by energy concentration at isolated points, leading to the formation of bubbles modelled on the standard 1-instanton on $\Sph^4$, as described via Taubes’ gluing theory \cite{taubes82}.

For higher topological charge, however, the structure of the resulting bubble-tree compactification remains only partially understood. In particular, a precise description of how energy decomposes between the weak limit connection and the emerging bubbles is still not fully understood. A natural question is whether there exist compatibility constraints between the limiting connection and the bubbles that appear in the blow-up process.

The main result of this paper shows that, in a bubbling configuration, the weak limit connection and the bubble are not independent. Instead, they satisfy a nontrivial compatibility condition at the concentration point, involving both the curvature of the limit connections and the conformal geometry of the underlying manifold and such constraints can rule out certain bubbling configurations.

This type of interaction is expected from analogy with the theory of harmonic maps from $\Sph^2$ to $\Sph^2$, which are either holomorphic or anti-holomorphic, preventing an anti-holomorphic bubble from developing on a holomorphic weak limit. In the Yang–Mills setting, related phenomena appear in the work of Yin \cite{yin_blow-up_2023} on the blow-up analysis of Yang–Mills fields in dimension four under a local conformal flatness condition: in the case of one bubble, algebraic constraints on curvature at concentration points are obtained. As a consequence no $1$-instanton can blow up on a $-1$-instanton.

The aim of this paper is to extend such obstruction results beyond the locally conformally flat case. In particular, we investigate conditions under which certain bubbling configurations are ruled out, with applications to compact four-manifolds such as $\mathbf{CP}^2$.

Let $(M^4,h)$ be a closed oriented manifold and $G$ be a compact Lie group. The Yang-Mills functional is defined for connections $A$ on principal $G$-bundles over $M$ by \[ \mathcal{YM}(A) = \int_M |F_A|^2_h \mathrm{vol}_h,\] where $F_A$ is the curvature of $A$. By definition, Yang-Mills connections are the critical points of this functional. In general, sequences of Yang-Mills connections with bounded energy are subject to concentration phenomena which we recall below in the case of one bubble for simplicity.
	
		\begin{definition} Let $(A_k)_k$ be a sequence of Yang-Mills connections on $(M^4,h)$, $A_\infty$ and $\widehat{A}_\infty$ Yang-Mills connections on $M$ and $\R^4$ respectively, $p$ a point in $M$ and $(\lambda_k)_k$ a sequence of positive numbers with $\lim \lambda_k = 0$.  We say that $(A_k)_k$ bubble-tree converges to $(A_\infty, \widehat{A}_\infty)$  with concentration at $p\in M$ of scale $(\lambda_k)_k$ if \begin{enumerate}
        \item $A_k$ converges to $A_\infty$ in $\mathcal{C}^\infty_{\mathrm{loc}}(M\setminus \{p\})$, up to gauge transformations,
        \item there exists $(p_k)_k$ with $p_k\to p$ such that $\phi_{k}^*A_k$ converges to $\widehat{A}_\infty$ in $\mathcal{C}^\infty_{\mathrm{loc}}(\R^4)$, up to gauge transformations, 
        where $\phi_{k} (x)=p_k+ \lambda _k x$ in local coordinates,
        \item 
        $$ \lim_{\delta\to 0}\lim_{k\rightarrow +\infty}\int_{\mathrm{B}_\delta(p_k)\backslash\mathrm{B}_{\lambda_k/\delta}(p_k)} \vert F_{A_k}\vert^2\, \mathrm{vol}_h  = 0$$
    \end{enumerate}
		    \label{bubbletree}
		\end{definition} The reader can refer for instance to \cite{taubes_path-connected_1984,taubes_framework_1988,laurainriviere2014angular, feehan_discreteness_2015,rivière2015variations} for a description of the general picture. In the following, for the sake of clarity, $p=p_k$, denoted by $0$ in normal coordinates. In the single bubble setting, our main result is the following (see \cref{mainthm4} for the general bubble-tree case):
		
		\begin{theorem}
		    \label{mainthm}
		     Under the conditions of \cref{bubbletree} and in the gauge of \cref{Rivieregauge}, the $2$-tensor $\mathrm{P}$ defined below is traceless symmetric:
		    \begin{equation*} \mathrm{P} = 
		    \left( F_{A_\infty} \circ \psi^*{F}_{\widehat{A}_\infty} \right)(0) + \int_{\R^4}  \langle  S_{{F}_{\widehat{A}_\infty}} \otimes \mathrm{W} , \mathrm{T} \rangle \mathrm{vol}
		    \end{equation*} Here $\psi:x \mapsto x/|x|^2$ is the inversion in $\R^4$, $S_{{F}_{\widehat{A}_\infty}}$ is the stress-energy tensor of the bubble (see \cref{defstresstensor}), $\mathrm{W}$ is the Weyl tensor of $h$ at $0$ and $\mathrm{T}$ is an explicit $(2,6)$-tensor (see \cref{mainthm4}).
		\end{theorem}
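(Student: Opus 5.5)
The plan is to derive a Pohozaev-type identity from the conservation law satisfied by the stress-energy tensor of a Yang--Mills field, tested against conformal Killing fields of the flat model. Write the stress-energy tensor of $A_k$ as
\[
S_{F}=F\circ F-\tfrac14|F|^2 h,\qquad (F\circ F)_{ij}=\langle F_{il},F_{jl}\rangle .
\]
The Yang--Mills equation (together with Bianchi) gives $\operatorname{div}_h S_{F_{A_k}}=0$. In normal coordinates at $0$, take the vector fields $X$ that are conformal Killing for the Euclidean metric; the relevant ones are the inversion-type fields $X^i(x)=2x^i\langle x,a\rangle-|x|^2a^i$ and, with an index pair freed, the associated matrix-valued fields. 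For each such $X$ and a ball $B_\delta$, integrate $\operatorname{div}_h(S_{F_{A_k}}(X,\cdot))$. This gives
\[
\int_{\partial B_\delta}S(X,\nu)=\tfrac12\int_{B_\delta}\langle S,\mathcal L_X h\rangle .
\]
The trace part of $\mathcal L_X h$ pairs to zero against $S$, because $S$ is traceless in dimension four. What survives is the non-conformal part of $\mathcal L_X h$, i.e. the deviation of $h$ from flat. In normal coordinates $h_{ij}=\delta_{ij}-\tfrac13R_{iajb}x^ax^b+O(|x|^3)$, so this part is quadratic in $x$ and involves the curvature at $0$. Its trace-free part that is not absorbed by a conformal change is the Weyl tensor, once we use conformal invariance of the YM energy to pass to a conformal metric in which the Ricci part vanishes to the needed order (conformal normal coordinates).

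Next, pass to the limit $k\to\infty$ by splitting $B_\delta$ into three regions: the neck $B_\delta\setminus B_{\lambda_k/\delta}$, the outer part near $\partial B_\delta$, and the bubble region $B_{\lambda_k/\delta}$.

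\textbf{Boundary term.} On $\partial B_\delta$ we have smooth convergence to $A_\infty$. Because $|X|\sim|x|^2$, the flux is $O(\delta^5)$ and is negligible when $A_\infty$ is alone. The cross term is the important one. The Uhlenbeck--Rivière gauge of \cref{Rivieregauge} gives an expansion of $F_{A_k}$ on the neck as $F_{A_\infty}(0)$ plus the inverted bubble curvature $\lambda_k^2|x|^{-4}\,\psi^*F_{\widehat A_\infty}$ at infinity, plus lower-order terms. Substituting this into the quadratic flux, with $|X|\sim|x|^2$ and $|\partial B_\delta|\sim\delta^3$, the cross term $F_{A_\infty}(0)\circ\psi^*F_{\widehat A_\infty}$ comes out at order $\lambda_k^2$, independent of $\delta$. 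The pure limit and pure bubble pieces are either of higher order or traceless-symmetric-compatible.

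\textbf{Interior term.} Rescale by $\phi_k$. The curvature error integrand $\langle S,\mathcal L_Xh-\text{conf}\rangle$ scales as $\lambda_k^2\,\langle S_{F_{\widehat A_\infty}}\otimes\mathrm W,\mathrm T\rangle$ after dividing by $\lambda_k^2$. Here $\mathrm T$ collects the polynomial coefficients $x^ax^b$ against $X$ and the derivative structure. Dominated convergence on $\R^4$ applies thanks to the neck energy condition (3) and the decay $|F_{\widehat A_\infty}|\lesssim|x|^{-4}$.

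Dividing by $\lambda_k^2$ and letting $k\to\infty$, then $\delta\to0$, gives for every admissible $a$ and the paired index structure an identity saying that the antisymmetric part and the trace of $\mathrm P$ vanish. The conformal fields of rotation type yield symmetry, and the dilation/inversion type yield the trace condition.

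The main obstacle is the neck analysis: showing that the cross term is exactly $F_{A_\infty}(0)\circ\psi^*F_{\widehat A_\infty}$ with no extra contribution. This requires a sharp decay estimate of $F_{A_k}-F_{A_\infty}(0)-\lambda_k^2\psi^*\ldots$ in the neck at rate $o(\lambda_k^2)$ in weighted norms. I would first try to obtain it from the $\epsilon$-regularity and the Coulomb-gauge elliptic estimates on dyadic annuli furnished by the Rivière gauge, together with a three-annulus/Fourier mode decomposition of the linearized equation.
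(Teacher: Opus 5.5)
Your ingredients are the right ones: conservation of the stress-energy tensor tested against conformal fields, a neck expansion of the curvature, and reduction to the Weyl tensor. Conformal normal coordinates are a legitimate alternative to the paper's direct check that the Ricci part $f\xi+\nabla^{\mathrm{s}}\omega$ of $\gamma$ only contributes a traceless symmetric tensor, as the paper itself remarks. However, the scaling bookkeeping at the core of your argument does not work, and this is a genuine gap.

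\emph{Test fields.} ``$\mathrm{P}$ is traceless symmetric'' means $\mathrm{P}\perp\R\xi\oplus\Lambda^2\R^4$. So the identity must come from the \emph{linear} conformal fields $X_T=Tx$ with $T\in\R\xi\oplus\Lambda^2\R^4$ (dilation and rotations), as in \cref{tensorpohozaev}. With your inversion-type fields, $|X|\sim|x|^2$, the cross term through a sphere of radius $\rho$ in the neck has size $\lambda_k^2\rho^{-4}\cdot\rho^2\cdot\rho^3=\lambda_k^2\rho$, not $\lambda_k^2$. In fact its leading part is an odd function on the sphere and integrates to zero. Meanwhile the bubble-region Weyl term is $\mathrm{O}(\lambda_k^3)$. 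So no identity of the stated form appears at order $\lambda_k^2$.

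\emph{Domain.} You cannot keep a fixed ball $\mathrm{B}_\delta$ and divide by $\lambda_k^2$. The limit--limit part of the flux through $\partial\mathrm{B}_\delta$ is of order $\delta^4$ for linear fields ($\delta^5$ for yours). The metric interior term over the outer region is of order $\delta^6$ (resp.\ $\delta^7$). For fixed $\delta$, both diverge after division by $\lambda_k^2$, so they are not negligible. Subtracting the identity of $A_\infty$ does not help: smooth convergence gives no $o(\lambda_k^2)$ control of $F_{A_k}-F_{A_\infty}$ at fixed distance from the concentration point, since the limit may be approached at a rate unrelated to $\lambda_k$. For the same reason the neck expansion cannot be used on $\partial\mathrm{B}_\delta$.

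\emph{Missing idea.} Use $\mathrm{div}_h S=0$ to move the boundary to the middle of the neck, $\partial\mathrm{B}_{\sqrt{\lambda_k}}$. There the limit--limit, cross and bubble--bubble parts of the flux are all exactly of order $\lambda_k^2$. After division by $\lambda_k^2$:
\begin{itemize}
\item the limit--limit part tends to $S_{F_{A_\infty}}(0)$, which is traceless symmetric;
\item the cross part gives $F_{A_\infty}\circ\psi^*F_{\widehat A_\infty}(0)$ modulo $\R\xi\oplus\Lambda^2\R^4$;
\item the bubble--bubble part is the flux of $\sigma_k^*\psi^*F_{\widehat A_\infty}$, with $\sigma_k(x)=\lambda_kx/|x|^2$, which satisfies the flat Pohozaev identity because $\sigma_k$ is conformal.
\end{itemize}
The remainder then only requires $\sup_{\partial\mathrm{B}_{\sqrt{\lambda_k}}}|F_{A_k}-F_{A_\infty}-\lambda_k^{-2}F_{\widehat A_\infty}(\lambda_k^{-1}\cdot)|\to 0$, not an $o(\lambda_k^2)$ rate.

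\emph{Interior term.} Over $\mathrm{B}_{\sqrt{\lambda_k}}$ it is controlled by the \emph{pointwise} neck bound $|F_{A_k}|\le C(1+\lambda_k^2r^{-4})$ of \cref{Rivieregauge}. The $\mathrm{O}(r^3)$ metric error then contributes $\mathrm{O}(\lambda_k)$ and the neck contributes $\mathrm{O}(\delta^2)$. The energy condition (3) alone does not suffice, because the weight $r^2\lambda_k^{-2}$ is large throughout the neck.

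\emph{Neck estimate.} The paper proves the required uniform estimate on $\partial\mathrm{B}_{\sqrt{\lambda_k}}$ via \cref{key}. It decomposes $e_k=A_k-A_\infty-\lambda_k^{-1}\widehat A_\infty(\lambda_k^{-1}\cdot)$ into low harmonic modes plus a remainder bounded by $\omega_{\lambda_k}^\alpha$. It uses $\diff^*e_k=0$ to discard the non-closed modes. Smooth convergence at both ends of the neck then shows that the leftover constant $2$-forms tend to $0$.
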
 	
		
		This constraint is a Pohozaev identity, analogous to the balancing law between the radial part and the angular part of the energy of harmonic maps. It is obtain in the same manner, by integrating the PDE against a suitable solution of its linearisation. The origin of such infinitesimal balancing conditions is the conformal invariance of the problem which implies, by Noether's theorem, the existence of conservation laws involving the stress-energy tensor. \Cref{mainthm} is the result of a careful analysis of such quantities where the two parts of the bubble-tree interact, namely in the middle of the neck regions. Note that additional constraints can be obtained from gauge invariance, see \cref{mainthm5}.
		Such ideas proved to be useful to rule out bubbling in various contexts: Yamabe metrics, e.g. \cite{schoen2006variational,druet_compactness_2004}; Ginzburg-Landau \cite{gianocca2025rigidityginzburglandauapproximationharmonic}; Yang-Mills \cite{yin_blow-up_2023}. 
		 \Cref{mainthm} generalises \cite[Theorem 1.4]{yin_blow-up_2023} where $\mathrm{W}=0$. In the case where $\widehat{A}_\infty$ is either self-dual or anti-self-dual, its stress-energy tensor vanishes and therefore the obstructions obtained in \cite[Theorem 1.4]{yin_blow-up_2023} still hold in this level of generality:
		 \begin{corollary}
		 	\label{coro3}
		 Under the conditions of \cref{mainthm}, if we further assume that $\widehat{A}_\infty$ is SD or ASD then the $2$-tensor $\mathrm{P}$ defined by \begin{equation*} \mathrm{P} = 
		    \left( F_{A_\infty} \circ \psi^*{F}_{\widehat{A}_\infty} \right)(0)
		    \end{equation*} is traceless symmetric.
		 \end{corollary}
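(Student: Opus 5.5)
The plan is to deduce the corollary directly from \cref{mainthm}: I will show that the integral term in $\mathrm{P}$ vanishes identically as soon as $\widehat{A}_\infty$ is SD or ASD.

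The only needed fact is that the stress-energy tensor of a self-dual or anti-self-dual curvature vanishes pointwise. I would recall it from \cref{defstresstensor}. In local orthonormal coordinates on $\R^4$ it reads, up to normalization,
\[
S_F(X,Y) = \langle \iota_X F, \iota_Y F\rangle - \tfrac14 |F|^2 \langle X, Y\rangle,
\]
the traceless part of $F\circ F$. Write $F = F^+ + F^-$ with $F^\pm$ taking values in $\Lambda^\pm\otimes\mathfrak g$. Using the splitting $\Lambda^2 = \Lambda^+\oplus\Lambda^-$ and $\Lambda^+\otimes\Lambda^- \cong \mathrm{Sym}^2_0$, one gets the classical identity
\[
S_F = c\,\langle F^+ , F^- \rangle_{\mathfrak g},
\]
where the right-hand side is contracted through this isomorphism and $c$ is a constant. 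Equivalently, $\sum_k F_{ik}F_{jk} = \tfrac14|F|^2\delta_{ij}$ whenever $F = \pm\star F$. This can be checked either on the basis $e^{12}\pm e^{34}$, $e^{13}\mp e^{24}$, $e^{14}\pm e^{23}$, or by noting that $(\iota_X F)$ for $F\in\Lambda^\pm$ is a quaternionic-type structure, hence conformal. So if $\widehat A_\infty$ is SD or ASD, one of $F^\pm$ is zero and $S_{F_{\widehat A_\infty}}\equiv 0$ on $\R^4$.

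Then the integrand $\langle S_{F_{\widehat A_\infty}}\otimes \mathrm W, \mathrm T\rangle$ vanishes pointwise, whatever the explicit tensor $\mathrm T$ is. The integral term is therefore zero. Its convergence is not an issue, because \cref{mainthm} already asserts it is finite and in any case the integrand is identically zero. The tensor $\mathrm P$ of \cref{mainthm} thus reduces to $(F_{A_\infty}\circ\psi^*F_{\widehat A_\infty})(0)$, which is therefore traceless symmetric.

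The main point to be careful about is matching the normalization of $S$ in \cref{defstresstensor}, in particular its behaviour under the inversion $\psi$ and the fact that it is computed with the flat metric of the bubble. Since $\psi$ is conformal and the SD/ASD condition is conformally invariant up to orientation, no subtlety arises: $\psi^*F_{\widehat A_\infty}$ is still ASD or SD, and the pointwise algebraic vanishing holds in any case. No additional use of the Weyl tensor is needed.
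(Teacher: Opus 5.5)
Your proposal is correct and matches the paper's route: the paper also gets the corollary from \cref{mainthm} by noting that $S_{\widehat{F}^\infty}\equiv 0$ when $\widehat{A}_\infty$ is SD or ASD, via \cref{stressSDASD} ($S_{F,h}=-2F^{+,h}\circ_h F^{-,h}$, proved with \cref{formsinteriorproduitscalaire}), which is the same algebraic fact you check on a basis or through the conformality of $X\mapsto \iota_X F$. Your discussion of the inversion $\psi$ is unnecessary, since the stress-energy tensor in the integral is that of $\widehat{A}_\infty$ itself; likewise the sign of your $S_F$, which is opposite to \cref{defstresstensor}, does not affect the vanishing.
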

	\noindent	 A direct consequence of our analysis in the non-conformally flat setting is the following: 
		 \begin{corollary}
	Under the assumptions of \cref{mainthm}, in the case $M=\mathbf{CP}^2$ and $G=\mathrm{SU}$(2), it is not possible for $A_\infty$ and $\widehat{A}_\infty$ to be respectively a $1$-instanton and a $-1$-instanton.
	\end{corollary}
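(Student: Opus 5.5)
The plan is to combine \cref{coro3} with an algebraic analysis of the tensor $\mathrm{P}$ at the concentration point, and then to use the explicit structure of $1$-instantons on $\mathbf{CP}^2$.

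\textbf{Reduction to \cref{coro3}.} Argue by contradiction and assume the configuration exists. The bubble $\widehat{A}_\infty$ is a $-1$-instanton on $\R^4$, hence SD or ASD with the orientation convention fixed by the sign of the charge. Its stress-energy tensor therefore vanishes, the Weyl term in $\mathrm{P}$ drops out, and \cref{coro3} says that
\[ \mathrm{P} = \big(F_{A_\infty}\circ \psi^*F_{\widehat{A}_\infty}\big)(0) \]
is traceless symmetric.

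\textbf{Matching duality types.} The inversion $\psi$ reverses orientation, so $\psi^*F_{\widehat{A}_\infty}$ has the opposite duality type to $F_{\widehat{A}_\infty}$. Since the charges of $A_\infty$ and $\widehat{A}_\infty$ have opposite signs, the value $\psi^*F_{\widehat{A}_\infty}(0)$ has the \emph{same} duality type as $F_{A_\infty}(0)$, say both in $\Lambda^+_0\otimes\mathfrak{su}(2)$. This is the nontrivial case. If the types were opposite, the product of an element of $\Lambda^+$ with an element of $\Lambda^-$ would already be traceless symmetric, and no constraint would follow.

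\textbf{Linear algebra at the point.} Identify $\Lambda^+_0\otimes\mathfrak{su}(2)$ with $3\times3$ matrices, writing $F_{A_\infty}(0)=M$ and $\psi^*F_{\widehat{A}_\infty}(0)=N$. Use the quaternionic relations $\omega_a\omega_b=-\delta_{ab}\mathrm{Id}+\epsilon_{abc}\omega_c$ for an orthonormal basis of $\Lambda^+$, together with the analogous relations in $\mathfrak{su}(2)$. Then the trace of $\mathrm{P}$ is a multiple of $\mathrm{tr}(M^TN)$, and the antisymmetric part of $\mathrm{P}$ is a multiple of the antisymmetric part of $M^TN$. So the condition reads: $M^TN$ is symmetric and traceless.

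The bubble is the standard BPST instanton, up to scaling, gauge and rotation. After inversion, its curvature at $0$ is $N=c\,R$ with $c\neq0$ and $R\in \mathrm{SO}(3)$, because the standard instanton identifies $\Lambda^+$ isometrically with $\mathfrak{su}(2)$. Hence $X=M^TR$ must be symmetric traceless with $XX^T=M^TM$ and $\det X=\det M$. Consequently the eigenvalues of $X$ are $\pm\sigma_i$, where the $\sigma_i$ are the singular values of $M$, with sign pattern constrained by $\det M$, and they must sum to zero.

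\textbf{The instanton on $\mathbf{CP}^2$ (main obstacle).} It remains to control the singular values of $M=F_{A_\infty}(0)$ for a $1$-instanton on $\mathbf{CP}^2$, at an arbitrary point. I would use the known description of the charge-one moduli space on $\mathbf{CP}^2$ (Buchdahl; Groisser--Parker): every such instanton is invariant under a large subgroup of $\mathrm{PU}(3)$, and the isotropy $\mathrm{U}(2)$ of a point acts irreducibly on $\Lambda^+$.

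First, I would try to show from this symmetry that $M$ is conformal, i.e. that the $\sigma_i$ are all equal to some $\sigma>0$. Then no choice of signs gives $\pm\sigma\pm\sigma\pm\sigma=0$, which is the contradiction. If the equal-singular-value claim fails at some points of the moduli space, the fallback is to compute $F$ explicitly from the ADHM-type description and check directly that $\pm\sigma_1\pm\sigma_2\pm\sigma_3\neq0$ under the determinant sign constraint. Either way, this verification on the explicit moduli space is where the real work lies. One must also ensure $M\neq0$: if $M$ vanished at a point, $\mathrm{P}=0$ would give no obstruction.
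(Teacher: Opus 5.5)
Your reduction matches the paper's. By \cref{coro3} and the matching of duality types, both $F_{A_\infty}(0)$ and $\psi^*F_{\widehat{A}_\infty}(0)$ are maps $\mathfrak{su}(2)\to\Lambda^+$, the second one conformal. A traceless symmetric $X$ with $X^2=M^TM$ then forces $\pm\sigma_1\pm\sigma_2\pm\sigma_3=0$. The paper works instead with $s=f^*\tilde f$ on $\mathfrak{su}(2)$, whose symmetry needs the gauge constraint of \cref{mainthm5}; your version on the $\Lambda^+$ side needs only the conformal constraint. The determinant condition depends on orientation conventions and plays no role.

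\textbf{The gap.} The gap is the step you postpone, and your primary plan for it rests on a false claim. The isotropy $\mathrm{U}(2)$ does not act irreducibly on $\Lambda^+$: on a K\"ahler surface $\Lambda^+=\R\omega\oplus\mathrm{Re}\,\Lambda^{2,0}$, and $\mathrm{U}(2)$ fixes $\omega$. Moreover, an instanton is invariant only under the isotropy of its own centre, not of the concentration point. In fact $F_{A_\infty}(0)$ is never conformal. The paper instead uses the explicit form $F=2\frac{1-t^2}{(D-t^2)^2}\bigl(-D^2\omega\,\mathbf{i}+t\,\diff z^1\wedge\diff z^2\,\mathbf{j}\bigr)$, $t\in[0,1)$. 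Its singular values are proportional to $(1,\beta,\beta)$; it is rank one at the reducible point $t=0$ and never zero. The argument then needs $2\beta\neq1$. Your proposal supplies none of this, so as it stands it does not reach a contradiction.

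\textbf{The fallback is delicate.} The inequality $2\beta\neq1$ is exactly the delicate point, so your fallback would not go through as stated. The paper asserts $\beta=t/(2\sqrt D)<1/2$. Take $\omega$ to be the K\"ahler form of the Fubini--Study metric normalised to be Euclidean at the origin. In that metric $|\omega|=\sqrt2$ but $|\diff z^1\wedge\diff z^2|=2D^{3/2}$, so $\beta=t/\sqrt D$. Two checks indicate this is the normalisation consistent with the formula. First, with it $\int|F|^2\,\mathrm{vol}$ is independent of $t$. Second, it gives $\beta\to1$ at the centre as $t\to1$, as it must, since the rescaled limit is a standard instanton whose curvature is conformal at every point.

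With this value, $\beta=1/2$ on $\{D=4t^2\}$, for instance at the centre when $t=1/2$. At such a point write $M=U\Sigma V^T$ with $\Sigma$ proportional to $\mathrm{diag}(1,\tfrac12,\tfrac12)$, and take $\tilde f=\alpha\,U\,\mathrm{diag}(\epsilon,-\epsilon,-\epsilon)\,V^T$ with $\epsilon=\pm1$. This choice satisfies every constraint used: traceless symmetric on $\Lambda^+$, symmetric on $\mathfrak{su}(2)$, in either orientation class. So neither your argument nor the paper's final step rules out concentration at points where $\beta=1/2$. Away from that locus the contradiction does follow, but the statement as given needs an extra ingredient there.
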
 
	
	\noindent For the sake of the presentation we have in the introduction considered the case of only one bubble, but our result easily generalizes to the general bubble-tree case. For instance if we consider the improved definition of a bubble-tree given in \cref{bbobstruction}, so we have a limit solution, with some branches, see remark \ref{branch}, each branch has a bottom bubble. Then one consequence of \cref{coro3} is that the bottom bubble determines the behaviour of the whole branch, see \cref{bbobstruction} for a precise statement.
	
	\begin{theorem} In the case of a general bubble-tree, in a branch, if all bubbles are SD or ASD with absolute degree $1$, then the bottom bubble determines the sign of the whole branch, in other words, if the bottom bubble is SD (resp. ASD) then all the bubbles of the branch are SD (resp. ASD).
	    \label{thmgeneralbubble}
	\end{theorem}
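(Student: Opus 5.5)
The plan is to reduce \cref{thmgeneralbubble} to repeated applications of \cref{coro3}, one for each parent--child pair in the branch, propagating the sign from the bottom bubble upward by induction on depth.

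\textbf{Setting up the induction.} In a bubble tree, each bubble $\widehat{A}^{(j)}$ other than the bottom one sits on a parent connection $\widehat{A}^{(i)}$ on $\R^4 \cong \Sph^4\setminus\{\infty\}$, with concentration at some point $q\in\R^4$. Locally near $q$, the pair (parent, child) forms a single-bubble configuration in the sense of \cref{bubbletree}. The background here is flat $\R^4$, conformally $\Sph^4$. I will use the general bubble-tree version (\cref{mainthm4}), which should reduce to \cref{mainthm} at each concentration point, with the parent playing the role of $A_\infty$. Since the child is SD or ASD, its stress-energy tensor vanishes. \cref{coro3} then applies and gives that
\[ \mathrm{P}=\bigl(F_{\widehat{A}^{(i)}}\circ \psi^*F_{\widehat{A}^{(j)}}\bigr)(q) \]
is traceless symmetric. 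The claim follows by induction along the branch, starting from the bottom bubble, once I prove the following lemma.

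\textbf{Key lemma.} If the parent is SD (resp.\ ASD) with $|\deg|=1$ and the child is ASD (resp.\ SD) with $|\deg|=1$, then $\mathrm{P}$ is not traceless symmetric. I would prove it with Yin's algebraic argument \cite{yin_blow-up_2023}:

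\begin{enumerate}
\item For $\mathrm{SU}(2)$ with degree $\pm1$, every instanton is gauge equivalent to a conformal image of the standard $1$-instanton. Its curvature at each point is, up to gauge and a positive factor, the standard identification $\Lambda^\pm\cong\mathfrak{su}(2)$. This holds at every point, since the standard instanton's curvature never vanishes.
\item The inversion $\psi$ reverses orientation, so $\psi^*F_{\widehat{A}^{(j)}}(0)$ has the opposite duality type. Its value at $0$ is also nonzero, because $\psi^*$ of a degree-one instanton extends smoothly across $0$ with nonvanishing curvature there.
\item The contraction $F\circ G$ is taken over the form indices and the Lie algebra inner product. A product of a form in $\Lambda^+$ with a form in $\Lambda^-$ is automatically traceless symmetric, since $\Lambda^+\otimes\Lambda^-\cong \mathrm{Sym}^2_0$. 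A product of two forms of the same type has a nonzero antisymmetric or trace part, coming from $\Lambda^+\otimes\Lambda^+\to\R\oplus\Lambda^+$.
\end{enumerate}

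So traceless symmetry holds exactly when the parent and $\psi^*$(child) have opposite types, i.e.\ when the parent and the child have the \emph{same} type, up to the orientation flip in step 2. I must check this sign convention carefully. It should be consistent with the $1$-instanton/$-1$-instanton exclusion stated in the introduction.

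\textbf{Main obstacle.} The hard part is the nondegeneracy in the same-type case. It is not enough to know $F(q)\neq0$ and $\psi^*F(0)\neq0$; I need $F(q)\circ\psi^*F(0)$ itself to have a nonzero trace or antisymmetric part. This requires that the two $\mathfrak{su}(2)$-valued $\Lambda^\pm$ forms be rank-three isomorphisms $\Lambda^\pm\to\mathfrak{su}(2)$, which holds for degree-one instantons. Then $\sum_a \langle F_{ik},G_{jk}\rangle$ can be written as a contraction with $R^TR'$ for $R,R'\in \mathrm{GL}$. Its antisymmetric part is given by $\epsilon$-type structure constants and vanishes only if $R^TR'$ is symmetric, while its trace is $\mathrm{tr}(R^TR')$. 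I would show these cannot both vanish, using $\det>0$ from orientation considerations, or else reduce to Yin's computation in \cite[Theorem 1.4]{yin_blow-up_2023}, which the flat background makes directly applicable.
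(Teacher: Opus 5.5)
Your reduction is the one the paper uses, but the step that carries the content is left open, and the tools you propose for it would fail. What matches the paper: for each parent--child pair in the branch, \cref{mainthm4} together with \cref{mainrmk} gives that $F^\infty\circ\tilde F^\infty(0)$ is traceless symmetric (the integral term drops because the child has vanishing stress-energy tensor). In the forbidden configuration the inverted child $\tilde F^\infty$ has the same duality type as the parent. Your $3\times3$ matrix $R^TR'$, obtained by contracting the Lie algebra indices, is up to transposition the map $\tilde F^\infty (F^\infty)^*:\Lambda^+\R^4\to\Lambda^+\R^4$ of \cref{lemmasingv}. Its trace and antisymmetric part are those of $F^\infty\circ\tilde F^\infty(0)$.

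The gap is the step you yourself single out: you never show that $R^TR'$ cannot be symmetric and traceless. Invertibility plus a sign on the determinant cannot do it. The matrix $\mathrm{diag}(2,-1,-1)$ is symmetric, traceless and has determinant $2>0$, and $\mathrm{diag}(1,1,-2)$ handles the other sign. Taking $R=I$ and $R'=\mathrm{diag}(2,-1,-1)$ produces two rank-three $\mathfrak{su}(2)$-valued self-dual forms $F,G$ with $F\circ G=0$. This also shows that the claim in your step 3 (two forms of the same type always give a nonzero trace or antisymmetric part) is false for Lie algebra valued forms. Deferring to Yin's computation would simply import the lemma you are missing rather than prove it.

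The missing idea is one you already wrote in step 1 and then weakened to ``rank three''. The curvature of a degree $\pm1$ $\mathrm{SU}(2)$ instanton is at every point a positive multiple of an isometry $\mathfrak{su}(2)\to\Lambda^\pm\R^4$. This is gauge invariant since $\mathrm{Ad}_g$ is orthogonal. It holds for $\tilde F^\infty(0)$ because $\psi^*\widehat A_\infty$ extends across $0$ to a degree-one instanton of the opposite type. Hence $R=cO$ and $R'=c'O'$ with $c,c'>0$ and $O,O'\in\mathrm{O}(3)$, so $R^TR'=cc'\,U$ with $U$ orthogonal. If $R^TR'$ is symmetric then $U^2=I$, so the eigenvalues of $U$ are $\pm1$. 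Then $\mathrm{tr}\,U=3-2\dim\ker(U+I)$ is odd, hence nonzero. This is precisely the paper's closing argument.

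With this inserted, your proof is complete and slightly leaner than the paper's. The paper runs the argument on $s=(F^\infty)^*\tilde F^\infty:\mathfrak{su}(2)\to\mathfrak{su}(2)$. It takes symmetry from the gauge constraint of \cref{mainthm5} and tracelessness from the conformal one. Working on $\Lambda^+\R^4$ as you do, both properties already follow from \cref{coro3}, so \cref{mainthm5} is not needed.
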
 
	
\noindent	\textbf{Organisation of the paper.} We first recall the definition and relevant properties of the stress-energy tensor in the Yang-Mills setting and formulate the corresponding Pohozaev identities. We then establish a pointwise asymptotic expansion of the curvature in the neck regions, relying on Rivière's gauge. In \cref{proofmainthm}, these ingredients are combined to prove the main theorem: we treat separately the constraints coming from infinitesimal conformal symmetry and from infinitesimal gauge symmetry, and extend the result to the general bubble-tree setting. \Cref{bbobstruction} is devoted to applications: rigidity statements along the branches of a bubble-tree and the exclusion of certain bubbling configurations on $\mathbf{CP}^2$. Technical results are collected in the appendix.\\

\noindent	\textbf{Added in proof.} During the final preparation of this manuscript, Waldron and Yin obtained related results independently \cite{waldronyin2026}. Their Theorem 1.1 overlaps with \cref{coro3}. In fact, under the additional assumption that the limit is unobstructed, they obtain a stronger conclusion. In contrast, \cref{mainthm} applies to general sequences of Yang–Mills connections, which is more general than the SD/ASD case. 	\\

\noindent	\textbf{Acknowledgements.} 
I would like to express my sincere gratitude to my supervisor, Paul Laurain, for his valuable advice, his support, and his careful proofreading of this paper.\\

\noindent	\textbf{Miscellaneous notations:} \begin{itemize}
	    \item $F\circ_h G (X,Y) = \langle \iota_X F, \iota_Y G \rangle_h$,
	    \item $\xi$ denotes the flat metric on $\R^4$,
	   \item when the metric is omitted, we mean the flat one.
	   \item if $T$ is a 2-tensor, $T_{[ij]} \coloneq T_{ij} - T_{ji}$
	\end{itemize}  
		
	\section{The stress-energy tensor}
	
	\subsection{Definition and first properties}

In this section, we recall general properties of the stress-energy tensor, see \cite{XinMonotonicity}.

\begin{definition}\label{defstresstensor} Let $F\in \Gamma(\Lambda^2T^*M \otimes \mathfrak{g})$. The stress-energy tensor of $F$ is the symmetric $2$-tensor denoted by $S_{F,h}$ and defined as \[ S_{F,h} = \frac{1}{4}|F|^2_h h - F{\circ}_h F.\]
\end{definition}
 \begin{remark}
    Up to a sign, $ S_{F,h}$ is the traceless part of $F{\circ}_h F$.
\end{remark}
\begin{remark} Consider $F\in \Gamma(\Lambda^2T^*M \otimes \mathfrak{g})$ and $g:M\mapsto G$. Then $S_{F^g,h} = S_{F,h}$ where $F^g\coloneq g^{-1} F g$.
\end{remark}

We now state an integration by parts formula for the stress-energy tensor which will be crucial in the following.
\begin{proposition}[(2.14) in \cite{XinMonotonicity}] \label{pohozaev1} Consider a domain $D\subset M$ with smooth boundary. Denote by $\nu$ its unit outward normal vector field. For every $X\in \Gamma(T M)$ and $F\in \Gamma(\Lambda^2T^*M \otimes \mathfrak{g})$, \begin{equation*}
		 \int_D \left( \frac{1}{2} \langle S_{F, h}, \mathcal{L}_X h \rangle_h + \mathrm{div}_h S_{F,h} (X) \right) \mathrm{vol}_h =
			\int_{\partial D} S_{F, h} (X, \nu) {\mathrm{vol}_{h}}_{| \partial D}.
		\end{equation*} 
\end{proposition}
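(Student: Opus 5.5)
The plan is to obtain the identity from the divergence theorem applied to the vector field $Y \coloneq S_{F,h}(X,\cdot)^\sharp$, the $h$-dual of the $1$-form $S_{F,h}(X,\cdot)$. The divergence theorem on $D$ gives
\[
\int_D \mathrm{div}_h Y \,\mathrm{vol}_h = \int_{\partial D} \langle Y,\nu\rangle_h \,{\mathrm{vol}_h}_{|\partial D} = \int_{\partial D} S_{F,h}(X,\nu)\,{\mathrm{vol}_h}_{|\partial D},
\]
so it suffices to check pointwise that $\mathrm{div}_h Y = \frac12\langle S_{F,h},\mathcal L_X h\rangle_h + (\mathrm{div}_h S_{F,h})(X)$.

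For the pointwise identity, work in a local orthonormal frame $(e_i)$ and use the Levi-Civita connection $\nabla$. The Leibniz rule gives
\[
\mathrm{div}_h Y = \nabla_i\bigl(S_{ij}X^j\bigr) = (\nabla_i S_{ij})X^j + S_{ij}\nabla_i X^j .
\]
The first term is $(\mathrm{div}_h S_{F,h})(X)$ by definition. Next, use the symmetry of $S_{F,h}$, which holds by \cref{defstresstensor} since both $h$ and $F\circ_h F$ are symmetric. Symmetry lets us replace $\nabla_i X_j$ by its symmetric part $\frac12(\nabla_iX_j+\nabla_jX_i) = \frac12(\mathcal L_X h)_{ij}$, a standard fact for the Levi-Civita connection. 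Hence $S_{ij}\nabla_iX^j = \frac12\langle S_{F,h},\mathcal L_X h\rangle_h$, which gives the claim.

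No step here is genuinely hard. The only point needing care is the sign and normalisation convention of $\langle\cdot,\cdot\rangle_h$ on symmetric $2$-tensors: it must be the full contraction $S_{ij}T_{ij}$, which is what makes the factor $\frac12$ come out correctly. Note also that the identity holds for arbitrary $F$ and $X$; no Yang--Mills equation is used. The Yang--Mills equation enters only later, when one computes $\mathrm{div}_h S_{F,h}$, which vanishes for Yang--Mills curvatures by the Bianchi identity together with $d_A^*F_A=0$.
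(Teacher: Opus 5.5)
Your proof is correct and is essentially the standard argument behind this formula. The paper gives no proof of its own and simply cites (2.14) of Xin: apply the divergence theorem to the vector field dual to $\iota_X S_{F,h}$, then use the symmetry of $S_{F,h}$ to replace $\nabla X$ by $\frac12\mathcal{L}_X h$. Your remark that $\langle\cdot,\cdot\rangle_h$ must be the full contraction also matches how the paper uses this identity later, in its tensorial Pohozaev identity.
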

In the context of Yang-Mills fields, this formula can be simplified using the following property:
\begin{proposition}[(2.10) in \cite{XinMonotonicity}] The stress-energy tensor associated to the curvature of a Yang-Mills connection is divergence-free.\label{YMdiv0}
\end{proposition}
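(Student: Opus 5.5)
We will prove that $\operatorname{div}_h S_{F_A,h}=0$ when $A$ is Yang–Mills by a direct computation in a local orthonormal frame $(e_i)$ that is synchronous at the point under consideration, so that $\nabla e_i=0$ there. Throughout, $\nabla$ denotes the coupled Levi-Civita/$A$-connection on $\Lambda^2T^*M\otimes\mathfrak{g}$. The inner product on $\mathfrak{g}$ is $\mathrm{Ad}$-invariant, so the metric is parallel for this coupled connection. The divergence of $S$ then only involves covariant derivatives of $F=F_A$, and the problem reduces to two identities: the Bianchi identity $d_AF=0$ and the Yang–Mills equation $d_A^*F=0$.

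With the convention $|F|^2=\frac12\sum_{i,j}|F_{ij}|^2$ (matching the sum over $i<j$), we have $(F\circ F)_{ij}=\sum_k\langle F_{ik},F_{jk}\rangle$, and the trace identity $\operatorname{tr}(F\circ F)=2|F|^2$ shows that $S$ as defined is traceless in dimension four. We compute each term of
\[
(\operatorname{div}S)_j=\tfrac14\,\nabla_j|F|^2-\sum_{i,k}\nabla_i\langle F_{ik},F_{jk}\rangle .
\]

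\emph{Second term.} Expanding,
\[
\nabla_i\langle F_{ik},F_{jk}\rangle=\langle\nabla_iF_{ik},F_{jk}\rangle+\langle F_{ik},\nabla_iF_{jk}\rangle .
\]
Summed over $i$, the first summand is $-\langle (d_A^*F)_k,F_{jk}\rangle$, which vanishes by the Yang–Mills equation.

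\emph{First term.} We have $\tfrac14\nabla_j|F|^2=\tfrac14\sum_{i,k}\langle\nabla_jF_{ik},F_{ik}\rangle$. The Bianchi identity gives
\[
\nabla_jF_{ik}=-\nabla_iF_{kj}-\nabla_kF_{ji}.
\]
Substituting, and using the antisymmetry of $F$ together with relabelling $i\leftrightarrow k$, both resulting terms equal $\sum_{i,k}\langle\nabla_iF_{jk},F_{ik}\rangle$. Hence
\[
\tfrac14\nabla_j|F|^2=\tfrac12\sum_{i,k}\langle\nabla_iF_{jk},F_{ik}\rangle .
\]

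The main obstacle is reconciling the coefficient $\frac12$ from the first term with the coefficient $1$ from the second term, so the bookkeeping of normalisations must be done carefully.

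If one instead normalises $|F|^2=\sum_{i,j}|F_{ij}|^2$, the first term becomes $\frac14\nabla_j|F|^2=\sum_{i,k}\langle\nabla_iF_{jk},F_{ik}\rangle$. This cancels the surviving piece $\sum_{i,k}\langle F_{ik},\nabla_iF_{jk}\rangle$ of the second term exactly, and gives $\operatorname{div}S=0$. With this normalisation, however, $S$ is traceless only in dimension two, not four.

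I therefore plan to fix the normalisation of $|F|^2$ and of $\circ$ so that they are consistent with \cite{XinMonotonicity}. In Xin's convention the stress tensor is $\frac12|F|^2h-F\circ F$ with $|F|^2=\frac12\sum|F_{ij}|^2$, which is the paper's $S$ up to the factor in front of $|F|^2$. I would redo the count carefully in that convention and verify that the Bianchi term and the Yang–Mills-reduced term cancel. This is the only delicate point; everything else is mechanical.
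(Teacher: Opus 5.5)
Your computation is the standard argument (Bianchi identity plus Yang--Mills equation) behind the result the paper cites from Xin; the paper itself gives no proof. Both halves of your computation are correct. The proposal nevertheless never reaches a conclusion. The reason is an arithmetic slip in the trace bookkeeping, which leads you to discard the one normalisation in which everything fits.

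Since $(F\circ F)_{ij}=\sum_k\langle F_{ik},F_{jk}\rangle$, one has $\operatorname{tr}(F\circ F)=\sum_{i,k}|F_{ik}|^2$ in any convention.
\begin{itemize}
\item With $|F|^2=\frac12\sum_{i,j}|F_{ij}|^2$ this gives $\operatorname{tr}S=\frac{n}{4}|F|^2-2|F|^2=\frac{n-8}{4}|F|^2$. So in that convention the paper's $S$ is \emph{not} traceless in dimension four.
\item With $|F|^2=\sum_{i,j}|F_{ij}|^2$ it gives $\operatorname{tr}S=\frac{n-4}{4}|F|^2$, which vanishes exactly in dimension four, not two.
\end{itemize}
Both of your tracelessness claims are therefore reversed.

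Once this is corrected, no tension remains. The full-sum normalisation $|F|^2=\sum_{i,j}|F_{ij}|^2$ is the one under which $S=\frac14|F|^2h-F\circ F$ is, up to sign, the traceless part of $F\circ F$. It is also the convention the paper actually uses elsewhere: the identity $\langle F^\infty,\diff r\wedge\iota_{\partial_r}\tilde F^\infty\rangle=2\langle\iota_{\partial_r}F^\infty,\iota_{\partial_r}\tilde F^\infty\rangle$ in the proof of \cref{mainthm2} holds precisely for the full sum.

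Moreover, Xin's tensor $\frac12|F|^2_{\mathrm{std}}h-F\circ F$, with $|F|^2_{\mathrm{std}}=\frac12\sum_{i,j}|F_{ij}|^2$, is literally the same tensor, since $\frac12\cdot\frac12\sum=\frac14\sum$. So your plan to redo the count in Xin's convention is exactly the computation you already carried out in your penultimate paragraph. There, $\frac14\nabla_j|F|^2=\sum_{i,k}\langle\nabla_iF_{jk},F_{ik}\rangle$ cancels the surviving term $\sum_{i,k}\langle F_{ik},\nabla_iF_{jk}\rangle$, and $\operatorname{div}_hS=0$.

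To finish: fix $|F|^2=\sum_{i,j}|F_{ij}|^2$ at the outset, drop the discussion of the half-sum convention, and state the conclusion. Tracelessness plays no role in the divergence computation itself, which uses only $d_AF=0$, $d_A^*F=0$ and metric compatibility of the coupled connection.
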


\noindent Recall that the Hodge operator $\star_h: \Lambda^2 T^*M \to \Lambda^2 T^*M$ is an isometry satisfying $\star_h ^2 = \mathrm{id}$ hence induce an orthogonal splitting \begin{equation*}
    \Lambda^2 T^*M = \Lambda^+ T^*M \oplus \Lambda^- T^*M
\end{equation*} where $\Lambda^\pm T^*M$ is the $\pm 1$ eigenspace of $\star_h$. Sections of $\Lambda^+ T^*M$ (resp. $\Lambda^- T^*M$) are called self-dual forms (resp. anti-self-dual forms) and if $\omega$ is a section of $\Lambda^2 T^*M$, we write $\omega = \omega^{+,h} + \omega^{-,h}$ its decomposition according to this splitting.

	\begin{proposition}\label{stressSDASD} For every $F\in \Gamma(\Lambda^2T^*M \otimes \mathfrak{g})$, 
		\begin{equation*} S_{F, h} =-2 F^{+,h} {\circ}_h F^{-,h}. \end{equation*} In particular, if either $F^{+,h}=0$ or $F^{-,h}=0$ then $S_{F,h}=0$. 
	\end{proposition}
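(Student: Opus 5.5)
The identity is pointwise and linear algebraic, so I will prove it at a single point, working in an $h$-orthonormal frame $(e_1,\dots,e_4)$ and using only that $\star_h$ is a self-adjoint involutive isometry of $\Lambda^2$. Write $F = F^+ + F^-$ with $F^\pm = F^{\pm,h}$. By bilinearity,
\begin{equation*}
F\circ_h F = F^+\circ_h F^+ + F^-\circ_h F^- + F^+\circ_h F^- + F^-\circ_h F^+ .
\end{equation*}
The plan has three steps: show that each pure term $F^\pm\circ_h F^\pm$ is a multiple of $h$; show that the mixed term $F^+\circ_h F^-$ is symmetric and traceless; and then compare with $S_{F,h} = \frac14|F|^2_h h - F\circ_h F$.

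\textbf{Pure terms.} For a real $2$-form $\omega$, set $(\omega\circ\omega)_{ij} = \sum_k \omega_{ik}\omega_{jk}$. A basis of $\Lambda^+$ is $e^{12}+e^{34}$, $e^{13}-e^{24}$, $e^{14}+e^{23}$, and the corresponding antisymmetric $4\times 4$ matrices $I,J,K$ satisfy the quaternion relations. Hence, for $\omega = aI + bJ + cK$,
\begin{equation*}
\omega\,\omega^T = -\omega^2 = (a^2+b^2+c^2)\,\mathrm{Id} = \tfrac14 |\omega|^2\,\mathrm{Id},
\end{equation*}
where $|\omega|^2 = \frac12\sum_{ij}\omega_{ij}^2$, consistent with $|e^{12}|=1$. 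With $\mathfrak{g}$-valued coefficients and an invariant inner product the same computation gives $F^+\circ_h F^+ = \frac14|F^+|^2_h\,h$; the cross terms $\langle a,b\rangle(IJ^T + JI^T)$ cancel because $IJ = -JI$. The same argument applies to $\Lambda^-$.

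\textbf{Mixed term.} $\Lambda^+$ and $\Lambda^-$ generate commuting copies of $\mathfrak{su}(2)$ in $\mathfrak{so}(4)$, so $AB = BA$ for $A\in\Lambda^+$ and $B\in\Lambda^-$. Since $AB^T = -AB$ and $BA^T = -BA$, this gives $F^+\circ_h F^- = F^-\circ_h F^+$, i.e. the mixed term is symmetric. Its trace is $\sum_{ij}\langle F^+_{ij},F^-_{ij}\rangle = 2\langle F^+,F^-\rangle_h = 0$ by orthogonality of the splitting.

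\textbf{Conclusion.} Using $|F|^2 = |F^+|^2 + |F^-|^2$,
\begin{equation*}
F\circ_h F = \tfrac14|F|^2_h\,h + 2F^+\circ_h F^-,
\end{equation*}
so $S_{F,h} = -2F^+\circ_h F^-$. The normalisation is consistent: $F\circ_h F$ has trace $2|F|^2$, which matches $4\cdot\frac14|F|^2 + 0\cdot|F|^2$. If $F^+ = 0$ or $F^- = 0$, the mixed term vanishes and therefore $S_{F,h} = 0$.

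The main obstacle is the commutation $[\Lambda^+,\Lambda^-] = 0$ and the quaternion relations. I will check these directly on the explicit bases; for example, $(e^{12}+e^{34})$ and $(e^{12}-e^{34})$ commute as matrices because they act on the orthogonal planes $\mathrm{span}(e_1,e_2)$ and $\mathrm{span}(e_3,e_4)$. The remaining point is the norm convention $|\omega|^2 = \frac12\sum\omega_{ij}^2$, which has to be the one consistent with the $\frac14$ in \cref{defstresstensor}.
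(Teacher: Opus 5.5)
Your strategy is sound: the pure terms are multiples of $h$ by the quaternion relations, and the mixed term is symmetric because $\Lambda^\pm$ commute as matrices and traceless by orthogonality. As written, however, the proof fails at exactly the constant that decides the statement.

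You fix $|\omega|^2=\frac12\sum_{i,j}\omega_{ij}^2$ (so $|e^{12}|=1$) and then assert $\omega\omega^T=(a^2+b^2+c^2)\,\mathrm{Id}=\frac14|\omega|^2\,\mathrm{Id}$. Under that norm each of $e^{12}+e^{34}$, $e^{13}-e^{24}$, $e^{14}+e^{23}$ has squared norm $2$. Hence $|\omega|^2=2(a^2+b^2+c^2)$, and the correct identity is $\omega\omega^T=\frac12|\omega|^2\,\mathrm{Id}$. Your closing ``consistency check'' hides a compensating slip: $\mathrm{tr}(F\circ_hF)=2|F|^2$ does not match $4\cdot\frac14|F|^2=|F|^2$.

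Done correctly in your convention, the computation gives $F\circ_hF=\frac12|F|_h^2h+2F^+\circ_hF^-$. Hence $S_{F,h}=-\frac14|F|_h^2h-2F^+\circ_hF^-$, which is not the claim. For example, with $F=e^{12}$ your convention gives $S_F=\mathrm{diag}(-\frac34,-\frac34,\frac14,\frac14)$, while $-2F^+\circ F^-=\mathrm{diag}(-\frac12,-\frac12,\frac12,\frac12)$.

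You flagged the right issue in your last sentence but resolved it the wrong way. The norm compatible with the $\frac14$ in \cref{defstresstensor} is the full tensor norm $|F|_h^2=\sum_{i,j}|F_{ij}|^2$, so $|e^{12}|^2=2$. This is the normalisation implicit in the paper. It is exactly the one that makes $S_{F,h}$ traceless, as the remark after \cref{defstresstensor} asserts. It is also the one invoked in the proof of \cref{mainthm2} when writing $\langle F,\diff r\wedge\iota_{\partial_r}\tilde F\rangle=2\langle\iota_{\partial_r}F,\iota_{\partial_r}\tilde F\rangle$. With this norm, $\mathrm{tr}(F\circ_hF)=|F|_h^2$ and $\omega\omega^T=\frac14|\omega|^2\,\mathrm{Id}$ is correct. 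The mixed term is still traceless, and the rest of your argument goes through unchanged.

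Once repaired, your route genuinely differs from the paper's. The paper never evaluates $F^\pm\circ_hF^\pm$. It uses \cref{formsinteriorproduitscalaire} to get $S_{F,h}=-S_{\star_hF,h}$ and writes $S_{F,h}=-\frac12\left(F\circ_hF-\star_hF\circ_h\star_hF\right)$. Substituting $\star_hF=F^+-F^-$ then makes the pure terms cancel identically. It applies the same lemma with $a=F^+$, $b=F^-$ to obtain $F^+\circ_hF^-=F^-\circ_hF^+$.

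That argument is frame-free and needs neither the quaternion relations nor $[\Lambda^+,\Lambda^-]=0$. Yours is more explicit and gives the extra information that $F^\pm\circ_hF^\pm$ are pure trace. Both hinge on the same constant: $S_{F,h}=-S_{\star_hF,h}$ is equivalent to $F\circ_hF+\star_hF\circ_h\star_hF=\frac12|F|_h^2h$, which holds only for the tensor norm. In that norm, the right-hand side of \cref{formsinteriorproduitscalaire} carries a factor $\frac12$.
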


\begin{proof}
		From \cref{formsinteriorproduitscalaire}, $S_{F,h} = - S_{\star_h F,h}$ and since $\star_h$ is an isometry and $\circ_h$ is bilinear \begin{align*}
			S_{F,h} &= \frac12 (S_{F,h} - S_{\star_h F,h}) \\
			&=- \frac12 (F \circ_h F -  \star_h F \circ_h \star_h F)\\
			&= -\frac12 ( (F^{+,h} + F^{-,h}) \circ_h (F^{+,h}+F^{-,h}) - (F^{+,h}-F^{-,h}) \circ_h (F^{+,h}-F^{-,h}))\\
			&= - \left(F^{+,h} \circ_hF^{-,h} +F^{-,h} \circ_hF^{+,h}  \right)
		\end{align*} Finally, use  \cref{formsinteriorproduitscalaire} again to check that $F^{+,h} \circ_hF^{-,h} = F^{-,h} \circ_hF^{+,h}$.
	\end{proof}
	\begin{remark}
		It is also true that being the curvature of a Yang-Mills $\mathrm{SU}(2)$-connection with vanishing stress-energy 
		tensor is equivalent to being SD/ASD (see \cite{girardi1978self}). 
	\end{remark}

\subsection{Pohozaev identities}
\begin{notation} We assume that the bundle has been trivialised over a ball in geodesic coordinates.
\end{notation}

\begin{remark} Denote by $\xi$ the flat metric on $\R^4$. We use the identification of square matrices of size $4$ with $2$-tensors on $\R^4$: a matrix $T$ corresponds to the bilinear form $\xi(\cdot, T\cdot)$.
Recall that if $T=\xi$ or $T$ is skew-symmetric, the vector field $X_T \coloneq T r\partial_r$ is conformal that is $X_T$ generates a flow of conformal transformations for the metric $\xi$ (in this context, of scaling and rotations respectively). For this reasons, we will denote by $\mathfrak{conf}(4)$ the space $\R \xi \oplus \Lambda^2 \R^4$.
\end{remark}
\noindent  The identities of the type of \cref{pohozaev1} with $X=X_T$ can all be accounted for in the following compact form:

\begin{proposition}\label{tensorpohozaev} Consider a ball $B\subset M$ in geodesic coordinates. If $S$ is the stress-energy tensor of a Yang-Mills connection on $(M,h)$ the tensor $\mathrm{P}$ defined below is traceless symmetric: \begin{align} \label{deftensorpohozaev} \mathrm{P}&=
\int_{\partial B} \left(\iota_{\partial_r}S\otimes r\diff r\right) {\mathrm{vol}_{h}}_{| \partial B}\\
&\quad- \int_{B} \left(\frac{1}{2} \langle  h^{-1}\nabla h, h^{-1}S \rangle \otimes r \diff r + S\circ (h^{-1}-\xi) + \frac{1}{4}\langle \xi - h^{-1},  S\rangle \xi \right)\mathrm{vol}_h\notag
\end{align}
where the integrals are understood coefficient-wise in the geodesic coordinates.
\end{proposition}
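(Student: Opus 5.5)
The plan is to apply \cref{pohozaev1} on $D=B$ with $F=F_A$ and the vector fields $X=X_T$, $T\in\mathfrak{conf}(4)$. By \cref{YMdiv0} the divergence term drops, so for every such $T$
\begin{equation*}
\frac12\int_B \langle S,\mathcal{L}_{X_T}h\rangle_h\,\mathrm{vol}_h=\int_{\partial B} S(X_T,\nu)\,{\mathrm{vol}_h}_{|\partial B}.
\end{equation*}
Both sides are linear in $T$, so each is the Frobenius pairing of $T$ with a fixed matrix-valued integral. I will show that this identity reads $\langle \mathrm{P},T\rangle=0$ for all $T\in\mathfrak{conf}(4)$. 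The orthogonal complement of $\R\xi\oplus\Lambda^2\R^4$ in $4\times4$ matrices is exactly the space of traceless symmetric matrices, and the statement follows.

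\emph{Boundary term.} In geodesic coordinates the Gauss lemma gives $\nu=\partial_r$ on $\partial B$, and $X_T=T\,r\partial_r$, i.e. $X_T^i=T^i{}_jx^j$. Hence $S(X_T,\nu)=r\,S(\partial_r,T\partial_r)$, which is the pairing of $T$ with $\iota_{\partial_r}S\otimes r\,\diff r$ (written coefficientwise as $S_{\cdot k}\tfrac{x^k}{r}\,x^j$). This yields the first term of \eqref{deftensorpohozaev}.

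\emph{Interior term.} In coordinates,
\begin{equation*}
(\mathcal{L}_{X_T}h)_{ij}=X_T^k\partial_kh_{ij}+h_{kj}T^k{}_i+h_{ik}T^k{}_j.
\end{equation*}
Contracting with $h^{ia}h^{jb}S_{ab}$ produces two pieces.
\begin{itemize}
\item The transport piece $x^lT^k{}_l\,\partial_kh_{ij}h^{ia}h^{jb}S_{ab}$ is the pairing of $T$ with $\langle h^{-1}\nabla h,h^{-1}S\rangle\otimes r\,\diff r$. With the prefactor $\tfrac12$ this gives the first interior term.
\item The linear piece collapses, using the symmetry of $S$ and the relation $h^{jb}h_{kj}=\delta^b_k$, to $2\,(S\circ h^{-1})\!:\!T$, where $:$ denotes the coefficientwise pairing of the coordinate arrays. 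After the factor $\tfrac12$ it is $(S\circ h^{-1})\!:\!T$.
\end{itemize}

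Next I split $h^{-1}=\xi+(h^{-1}-\xi)$ in the linear piece. The part $S\circ(h^{-1}-\xi)$ gives the middle term of \eqref{deftensorpohozaev}. The flat part $S\!:\!T$ is handled according to $T$:
\begin{itemize}
\item If $T$ is skew, $S\!:\!T=0$ because $S$ is symmetric.
\item If $T=\xi$, then $S\!:\!\xi=\mathrm{tr}_\xi S$. Since $S$ is $h$-traceless (definition of $S$, using $|h|^2_h=4$), we have $\mathrm{tr}_\xi S=\langle \xi-h^{-1},S\rangle$.
\end{itemize}
Both cases are captured by replacing $S\!:\!T$ with $\tfrac14\langle\xi-h^{-1},S\rangle\,\xi\!:\!T$, using $\xi\!:\!\xi=4$ and $\xi\!:\!T=0$ for $T$ skew. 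Moving all terms to one side gives $\langle\mathrm{P},T\rangle=0$ for every $T\in\mathfrak{conf}(4)$.

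\emph{Expected difficulty.} The main obstacle is purely bookkeeping. I must fix the matrix–bilinear-form identification and the index placement in the notation $F\circ G$ and $\otimes\, r\,\diff r$, so that every term pairs with $T$ rather than $T^{t}$ and the signs match \eqref{deftensorpohozaev} exactly. I will check this first on the case $h=\xi$, where only the boundary term survives and the statement is the classical flat Pohozaev identity. Then I will verify the correction terms as they appear, tracking the dilation case $T=\xi$ separately, since that is where the trace correction enters.
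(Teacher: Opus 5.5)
Your proposal is correct and follows essentially the paper's proof. You apply \cref{pohozaev1} with $X=X_T$, drop the divergence term by \cref{YMdiv0}, and expand $\mathcal{L}_{X_T}h$ in coordinates into the transport term and $2S\circ h^{-1}$. You then use $\mathrm{tr}_h S=0$ to trade the flat part $S$ for $\tfrac14\langle\xi-h^{-1},S\rangle\xi$. The only cosmetic difference is the last step: the paper first shows the raw tensor $\Phi$ is orthogonal to $\mathfrak{conf}(4)$ and then observes directly that $\mathrm{P}-\Phi=\int_B\bigl(S-\tfrac14\mathrm{tr}(S)\xi\bigr)\mathrm{vol}_h$ is traceless symmetric, whereas you verify the same fact case by case on $T=\xi$ and on skew $T$.
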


\begin{proof} Apply \cref{pohozaev1} with $D=B$ and $X=X_T$ where $T\in \mathfrak{conf}(4)$ is arbitrary, using \cref{YMdiv0}. Note that $\nu = \partial_r$ on $\partial B$ according to Gauss's lemma.
 Write $T_{i j} = \xi (\partial_i, T\partial_j)$. Then \begin{align*}
    0 &=  \int_{\partial B} S (T r\partial_r, \partial_r) {\mathrm{vol}_{h}}_{| \partial B} - \frac12 \int_B \langle S, \mathcal{L}_{Tr\partial_r} h \rangle_h \mathrm{vol}_h \\
    &= T_{ij} \left( \int_{\partial B} S (x^j\partial_i, \partial_r) {\mathrm{vol}_{h}}_{| \partial B} - \frac12\int_B \langle S, \mathcal{L}_{x^j\partial_i} h \rangle_h \mathrm{vol}_h \right)
\end{align*} and this means that the $2$-tensor $\Phi$ given by \begin{equation*}
    \Phi_{ij} = \int_{\partial B} S (x^j\partial_i, \partial_r) {\mathrm{vol}_{h}}_{| \partial B} - \frac12\int_B \langle S, \mathcal{L}_{x^j\partial_i} h \rangle_h \mathrm{vol}_h 
\end{equation*} is orthogonal to $\mathfrak{conf}(4)$, that is $\Phi$ is traceless symmetric. On the one hand, it is straightforward that \begin{equation}
    S (x^j\partial_i, \partial_r) = \left(\iota_{\partial_r}S\otimes r\diff r\right)_{ij}. \label{tensorisationbdy}
\end{equation} On the other hand, writing \begin{equation}
    h=h_{\alpha\beta} \diff x^{\alpha}\otimes\diff x^{\beta},
\end{equation}  we get the following expression \begin{align*}
    \mathcal{L}_{x^j\partial_i} h &=  \left(\mathcal{L}_{x^j\partial_i}h_{\alpha\beta} \right)\diff x^{\alpha}\otimes\diff x^{\beta} + h_{\alpha\beta} \left(\mathcal{L}_{x^j\partial_i}\diff x^{\alpha}\right)\otimes\diff x^{\beta} + h_{\alpha\beta}\diff x^{\alpha} \otimes \left(\mathcal{L}_{x^j\partial_i}\diff x^{\beta}\right)\\
    &= \left(x^j\partial_i h_{\alpha\beta} \right)\diff x^{\alpha}\otimes\diff x^{\beta} + h_{\alpha\beta} \delta_{i\alpha}\diff x^j  \otimes\diff x^{\beta} + h_{\alpha\beta}\diff x^{\alpha} \otimes \delta_{i\beta}\diff x^j\\
    &= \nabla_{x^j\partial_i}h + h_{i\beta}\diff x^j  \otimes\diff x^{\beta} + h_{\alpha i}\diff x^{\alpha} \otimes \diff x^j
\end{align*}
which implies \begin{align*}
    \langle S, \mathcal{L}_{x^j\partial_i} h \rangle_h - \langle S, \nabla_{x^j\partial_i} h \rangle_h &= h_{i\beta}  \langle S, \diff x^j  \otimes\diff x^{\beta} \rangle_h+  h_{i\alpha} \langle S, \diff x^i  \otimes\diff x^{\alpha} \rangle_h \\
    &= h_{i\beta} h^{j\mu}S_{\mu \nu}h^{\nu\beta} + h_{i\alpha} h^{j\mu}S_{\mu \nu}h^{\nu\alpha}\\
    &= 2 h^{j\mu}S_{\mu i}.
\end{align*} Finally, we obtain \begin{equation} \label{tensorisationvol}
     \langle S, \mathcal{L}_{x^j\partial_i} h \rangle_h = \left(\langle h^{-1}S, h^{-1}\nabla h \rangle \otimes r\diff r + 2 S\circ h^{-1}\right)_{ij}.
\end{equation}
Therefore, if $\mathrm{P}$ is the tensor defined in \cref{deftensorpohozaev}, the combination of \cref{tensorisationbdy,tensorisationvol} yields \begin{equation}
    \mathrm{P} = \Phi + \int_B \left( S\circ \xi - \frac14 \langle \xi - h^{-1}, S\rangle \xi\right)\mathrm{vol}_h.
\end{equation} However, $ S\circ \xi= S$ and $\langle \xi - h^{-1}, S\rangle   =\mathrm{tr}(S) - \mathrm{tr}_h(S) = \mathrm{tr}(S)$. Since we have  \begin{equation}
    \mathrm{P} -  \Phi = \int_B \left( S -\frac14  \mathrm{tr}(S)\xi\right)\mathrm{vol}_h
\end{equation} then $ \mathrm{P} -  \Phi $ is traceless symmetric, and so is $\mathrm{P}$.
\end{proof}
	
	\begin{lemma}\label{simplifypohozaevtensor} Consider a sequence $(A_k)_k$ as in \cref{bubbletree} with concentration scale $\lambda_k$, in the gauge of \cref{Rivieregauge}. Define $\widehat{A}_k= \lambda_k A_k(\lambda_k\cdot)$ and denote by $\mathrm{P}_k$ the Pohozaev tensor of \cref{tensorpohozaev} for $A_k$ in the ball $B= \mathrm{B}_{\sqrt{\lambda_k}}$ in geodesic coordinates. Consider $\gamma$ the degree 2-term of the Taylor expansion of $h$ in geodesic coordinates at $0$. Then for every $k\in \N$ and  $\delta\in(0,1)$, the following holds\begin{align} 
    \lambda_k ^{-2} \mathrm{P}_k = &\int_{\Sph^3} \left(\iota_{\partial_r}S_{F_{A_k}}(\sqrt{\lambda_k}\cdot) \otimes r\diff r\right) \mathrm{vol}_{\Sph^3}\notag\\ 
    &- \int_{\mathrm{B}_{1/\delta}} \left(\frac{1}{2} \langle  S_{F_{\widehat{A}_k}} , \nabla\gamma\rangle \otimes r \diff r -S_{F_{\widehat{A}_k}} \circ \gamma + \frac{1}{4}\langle S_{F_{\widehat{A}_k}}, \gamma \rangle \xi \right)\mathrm{vol} + \varpi_{k,\delta} \label{approxlocalPoh}
\end{align}  with \begin{equation*}
    \lim_{\delta\to 0}\lim_{k\to +\infty} \varpi_{k,\delta} = 0.
\end{equation*}
\end{lemma}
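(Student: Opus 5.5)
Starting from \cref{deftensorpohozaev} with $B=\mathrm{B}_{\sqrt{\lambda_k}}$, I will treat the boundary term and the volume term separately. The boundary term uses Gauss's lemma and the rescaling $x=\sqrt{\lambda_k}\,\theta$: ${\mathrm{vol}_h}_{|\partial B}=\lambda_k^{3/2}(1+\gdo{\lambda_k})\mathrm{vol}_{\Sph^3}$, and $r=\sqrt{\lambda_k}$ on $\partial B$. This gives exactly $\lambda_k^2\int_{\Sph^3}(\iota_{\partial_r}S_{F_{A_k}}(\sqrt{\lambda_k}\cdot)\otimes r\diff r)\mathrm{vol}_{\Sph^3}$, up to an error. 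Measured against $\lambda_k^2$, that error is $\gdo{\lambda_k}\cdot\lambda_k^2\sup_{\partial B}|F_{A_k}|^2$. The neck region decays in the sense of Definition~\ref{bubbletree}(3), and the pointwise neck estimates of the next section give $\lambda_k^2|F_{A_k}|^2=o(1)$ on $\partial \mathrm{B}_{\sqrt{\lambda_k}}$. At minimum, the crude bound $|F_{A_k}|\lesssim \lambda_k^{-1}$ there already makes this contribution $\gdo{\lambda_k}$, so it goes into $\varpi_{k,\delta}$.

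For the volume term I expand the metric in geodesic coordinates as $h=\xi+\gamma+\gdo{|x|^3}$, so that $h^{-1}=\xi-\gamma+\gdo{|x|^3}$, $h^{-1}\nabla h=\nabla\gamma+\gdo{|x|^2}$ and $\mathrm{vol}_h=(1+\gdo{|x|^2})\mathrm{vol}$. The integrand of \cref{deftensorpohozaev} then becomes the linearized integrand $\frac12\langle S,\nabla\gamma\rangle\otimes r\diff r-S\circ\gamma+\frac14\langle S,\gamma\rangle\xi$. The remainder is bounded by $C|x|^3|F_{A_k}|^2$; this uses $|h^{-1}S-S|=\gdo{|x|^2}|S|$, and the radial factor $r$ supplies one extra power. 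Both the linear terms and the remainder are homogeneous in $x$. Substituting $x=\lambda_k y$, with $F_{\widehat A_k}(y)=\lambda_k^2F_{A_k}(\lambda_k y)$ and $\gamma(\lambda_k y)=\lambda_k^2\gamma(y)$, each linear term scales as $\lambda_k^{2}\cdot\lambda_k^{-4}\cdot\lambda_k^{4}=\lambda_k^2$. Dividing by $\lambda_k^2$ therefore yields the integral of the linearized integrand over $\mathrm{B}_{\lambda_k^{-1/2}}$ in the $y$ variable. The remainder contributes at most $\lambda_k^{-2}\int_{\mathrm{B}_{\sqrt{\lambda_k}}}|x|^3|F_{A_k}|^2\le\lambda_k^{-1/2}\int_{\mathrm B_{\sqrt{\lambda_k}}}|F_{A_k}|^2$. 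This is not obviously small, so I will instead split it as $|x|^3\le |x|^2\sqrt{\lambda_k}$ and use the same splitting as below.

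The remaining task is to replace $\mathrm{B}_{\lambda_k^{-1/2}}$ by $\mathrm{B}_{1/\delta}$. The difference is an integral over the annulus $\mathrm{B}_{\lambda_k^{-1/2}}\setminus\mathrm{B}_{1/\delta}$ in $y$, with integrand bounded by $C|y|^2|F_{\widehat A_k}|^2$. Undoing the scaling, this equals $C\lambda_k^{-2}\int_{\mathrm{B}_{\sqrt{\lambda_k}}\setminus \mathrm{B}_{\lambda_k/\delta}}|x|^2|F_{A_k}|^2$. The weight $|x|^2/\lambda_k^2$ ranges from $\delta^{-2}$ to $\lambda_k^{-1}$, so the neck energy smallness of Definition~\ref{bubbletree}(3) alone is \emph{not} sufficient. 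This is the main obstacle. To overcome it I will use a pointwise decay estimate in the neck, obtained in Rivière's gauge from the $\varepsilon$-regularity on dyadic annuli together with the exponential-type energy decay along the neck. The estimate I aim for has the form
\[
|F_{A_k}|(x)\lesssim \frac{\lambda_k^2}{|x|^4}+1+\eta_k(x),\qquad \lambda_k/\delta\le|x|\le\sqrt{\lambda_k},
\]
with $\eta_k$ small. The first term reflects the bubble decay through inversion, and the second the bounded limit $F_{A_\infty}$. The bubble part then contributes $\lambda_k^{-2}\int_{|x|>\lambda_k/\delta}|x|^{2}\lambda_k^4|x|^{-8}\lesssim\delta^{2}$. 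The limit part contributes $\lambda_k^{-2}\int_{|x|<\sqrt{\lambda_k}}|x|^2\lesssim\lambda_k$. Cross terms are handled by Cauchy–Schwarz and are $\lesssim\delta$, up to a logarithmic factor that is killed by the choice $\sqrt{\lambda_k}$. The earlier remainder is treated identically, with an extra factor of $\sqrt{\lambda_k}$. Collecting all errors into $\varpi_{k,\delta}$ then gives $\lim_{\delta\to0}\lim_{k}\varpi_{k,\delta}=0$.
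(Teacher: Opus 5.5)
Your overall route is the paper's: expand $h=\xi+\gamma+O(r^3)$, discard the $O(r^3)$ remainder and the neck part of the linearised integrand using the pointwise bound $|F_{A_k}|\le C(1+\lambda_k^2/r^4)$, then rescale. Your $\delta^2$ and $O(\lambda_k)$ bounds on $\mathrm{B}_{\sqrt{\lambda_k}}\setminus\mathrm{B}_{\lambda_k/\delta}$ are exactly the paper's. The step that fails as written is the boundary term. The factor $\lambda_k^2$, coming from the area $\lambda_k^{3/2}$ of $\partial\mathrm{B}_{\sqrt{\lambda_k}}$ and from $r=\sqrt{\lambda_k}$, is precisely what the normalisation $\lambda_k^{-2}\mathrm{P}_k$ removes. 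So replacing $h$ by $\xi$ on the boundary contributes $O(\lambda_k)\sup_{\partial\mathrm{B}_{\sqrt{\lambda_k}}}|F_{A_k}|^2$ to $\varpi_{k,\delta}$, not $O(\lambda_k)\,\lambda_k^2\sup|F_{A_k}|^2$. Consequently, neither $\lambda_k^2|F_{A_k}|^2=o(1)$ nor the crude bound $|F_{A_k}|\lesssim\lambda_k^{-1}$ suffices; the latter only gives $O(\lambda_k^{-1})$. What you need is $\lambda_k\sup_{\partial\mathrm{B}_{\sqrt{\lambda_k}}}|F_{A_k}|^2\to0$. The paper obtains this from \cref{Rivieregauge} once more: at $r=\sqrt{\lambda_k}$ one has $1+\lambda_k^2/r^4=2$, so $|F_{A_k}|=O(1)$ there and the error is $O(\lambda_k)$. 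The same bound is also what keeps the main boundary term bounded.

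Second, you do not need to rederive the pointwise estimate you aim for from $\varepsilon$-regularity and energy decay. It is the second conclusion of \cref{Rivieregauge}, available because the lemma is stated in that gauge, and it is the only input the paper uses for both the remainder and the annulus. Leaving $\eta_k$ unquantified is where your version could break. The annulus argument closes if $\eta_k$ is bounded uniformly in $k$, since it then contributes $O(\lambda_k)$. It does not close with the bound $o(1)|x|^{-2}$ that $\varepsilon$-regularity plus small neck energy alone provides, which contributes $o(1)\lambda_k^{-1}$. The rate of decay along the neck genuinely matters, and that quantitative decay is exactly what \cref{Rivieregauge} supplies.
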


\begin{proof} In this context \cref{tensorpohozaev} gives:
\begin{align}
    \lambda_k ^{-2} \mathrm{P}_{k}& =
 \lambda_k ^{-2}\int_{\partial \mathrm{B}_{\sqrt{\lambda_k}}} \left(\iota_{\partial_r}S_k\otimes r\diff r\right) {\mathrm{vol}_{h}}_{| \partial \mathrm{B}_{\sqrt{\lambda_k}}} \label{auxPoh}\\&\quad - \lambda_k ^{-2} \int_{\mathrm{B}_{\sqrt{\lambda_k}}} \left(\frac{1}{2} \langle  h^{-1}\nabla h h^{-1}, S_k \rangle \otimes r \diff r + S_k\circ (h^{-1}-\xi) + \frac{1}{4}\langle \xi - h^{-1},  S_k\rangle \xi \right)\mathrm{vol}_h \notag
\end{align} where $S_k = S_{F_{A_k},h}$. The key point is the following estimate: \begin{align}
    \lambda_k^{-2}\int_{\mathrm{B}_{\sqrt{\lambda_k}}} r^3 |F_{A_k}|^2 \mathrm{vol}_\xi &\leq C  \left(\lambda_k ^{-2} \int_{\lambda_k}^{\sqrt{\lambda_k}} r^3\left(\frac{\lambda_k ^2}{r^4}\right) ^2  r^3\diff r + \int_{\mathrm{B}_{{\lambda_k}}} \lambda_k |F_{A_k}|^2 \mathrm{vol}_\xi  \right)\notag\\
    &\leq C\lambda_k \label{estaux1}
\end{align} where we used the estimate of \cref{Rivieregauge} in the neck. This means that every metric term of order $\gdo{r^3}$ in the integrand of the last term of \cref{auxPoh} can be dropped in the limit $k\to+\infty$. More precisely, since by definition \begin{equation}
    h = \xi + \gamma + \gdo{r^3}
\end{equation} we can replace in \cref{auxPoh} $h^{-1}-\xi$ by $-\gamma$, $h^{-1}\nabla h h^{-1}$ by $\nabla \gamma$, $\mathrm{vol}_h$ by $\mathrm{vol}\coloneq \mathrm{vol}_\xi$ and $S_k$ by $S_{F_{A_k}}\coloneq S_{F_{A_k},\xi}$ while creating an error of order $\gdo{\lambda_k}$, that is \begin{align*}
    \lambda_k ^{-2} \mathrm{P}_{k}& =
 \lambda_k ^{-2}\int_{\partial \mathrm{B}_{\sqrt{\lambda_k}}} \left(\iota_{\partial_r}S_k\otimes r\diff r\right) {\mathrm{vol}_{h}}_{| \partial \mathrm{B}_{\sqrt{\lambda_k}}} \\&\quad - \lambda_k ^{-2} \int_{\mathrm{B}_{\sqrt{\lambda_k}}} \left(\frac{1}{2} \langle S_{F_{A_k}}, \nabla\gamma\rangle \otimes r \diff r - S_{F_{A_k}}\circ \gamma + \frac{1}{4}\langle \gamma,  S_{F_{A_k}}\rangle \xi \right)\mathrm{vol}_\xi + \gdo{\lambda_k}
\end{align*}

\noindent Similarly, for all $\delta\in (0,1)$, \begin{equation}
    \lambda_k^{-2}\int_{\mathrm{B}_{\sqrt{\lambda_k}}\backslash \mathrm{B}_{{\lambda_k}/\delta}} r^2 |F_{A_k}|^2 \mathrm{vol}_\xi \leq C  \int_{\lambda_k/\delta}^{\sqrt{\lambda_k}} \lambda_k ^2 r^{-3}  \diff r \leq C \delta^2 \label{estaux2}
\end{equation} and the contribution of the neck region in the last term of \cref{auxPoh} also vanishes in the limit. 

\noindent Finally, using \cref{Rivieregauge} again, $F_{A_k}$ is of order $\gdo{1}$ at scale $\sqrt{\lambda_k}$ so we can also replace $S_k$ by $S_{F_{A_k}}$ and ${\mathrm{vol}_{h}}_{| \partial \mathrm{B}_{\sqrt{\lambda_k}}}$ by ${\mathrm{vol}_{\xi}}_{| \partial \mathrm{B}_{\sqrt{\lambda_k}}}$. We then obtain \begin{align*}
    \lambda_k ^{-2} \mathrm{P}_{k}& =
 \lambda_k ^{-2}\int_{\partial \mathrm{B}_{\sqrt{\lambda_k}}} \left(\iota_{\partial_r}S_{F_{A_k}}\otimes r\diff r\right) {\mathrm{vol}_{\xi}}_{| \partial \mathrm{B}_{\sqrt{\lambda_k}}} \\&\quad - \lambda_k ^{-2} \int_{\mathrm{B}_{\sqrt{\lambda_k}}} \left(\frac{1}{2} \langle S_{F_{A_k}}, \nabla\gamma\rangle \otimes r \diff r - S_{F_{A_k}}\circ \gamma + \frac{1}{4}\langle \gamma,  S_{F_{A_k}}\rangle \xi \right)\mathrm{vol}_\xi + \gdo{\lambda_k + \delta^2}
\end{align*}
The formula \cref{approxlocalPoh} is then obtained using the change of variables $y=\sqrt{\lambda_k}x$ in the first integral and $y=\lambda_k x$ in the second one (using the homogeneity of $\gamma$), and by calling $\varpi_{k,\delta}$ the term $\gdo{\lambda_k + \delta^2}$.
\end{proof}
\begin{remark} As mentioned in the proof of \cref{simplifypohozaevtensor}, on the right-hand side of \cref{approxlocalPoh}, every term is bounded. The goal of next section is to compute their limits to obtain \cref{mainthm}.
\end{remark}
	
\section{Asymptotic expansion of the curvature in the neck region}

		To prove \cref{mainthm}, we combine the conservation laws to pointwise estimates of the curvature in the neck regions, as in \cite{yin_blow-up_2023}. For this purpose, but also to identify the fibers of the bundles at the concentration points (which is needed to make sense of the tensor $\mathrm{P}$), we must fix a gauge in the neck region. As opposed to \cite{yin_blow-up_2023}, we do not construct an \textit{ad hoc} gauge for our problem and choose to use the one constructed by Rivière in \cite{riviere2002interpolation} (see also \cite{rivière2015variations}). We recall below some of its properties.
			\begin{theorem}
			\label{Rivieregauge} 
			Under the conditions of \cref{bubbletree}, there exists a gauge such that in $\mathrm{B}_\delta(p_k)\backslash\mathrm{B}_{\lambda_k}(p_k)$,		     $\diff^{\ast} A_k =
		0$ and
		\begin{equation*} r | A_k | + r^2 | \nabla A_k | + r^3 | \nabla^2 A_k | \leq C \left(
		r^2+\frac{\lambda_k^2}{r^2} \right).\end{equation*} In particular \begin{equation*} | F_{A_k} | \leq C \left(
		1+\frac{\lambda_k^2}{r^4} \right)\end{equation*} where $r = |x-p_k|$.
		\end{theorem}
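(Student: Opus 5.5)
This result is Rivière's neck gauge (\cite{riviere2002interpolation}, see also \cite{rivière2015variations}). My plan is to reconstruct it in three stages: a Coulomb gauge on dyadic annuli, patched into a global Coulomb gauge on the whole neck; then an \emph{energy} decay estimate for $F_{A_k}$ along the neck; and finally a conversion of that integral decay into the pointwise bound via $\varepsilon$-regularity.

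\textbf{Step 1: small energy and gauge on annuli.} Write $N_k=\mathrm{B}_\delta(p_k)\setminus \mathrm{B}_{\lambda_k}(p_k)$ and decompose it into dyadic annuli $\Omega_j=\{2^{j}\lambda_k<r<2^{j+1}\lambda_k\}$. Condition (3) of \cref{bubbletree}, together with the smooth convergence near both ends of the neck, gives $\sup_j \int_{\Omega_j\cup\Omega_{j\pm1}}|F_{A_k}|^2\le\varepsilon$ once $\delta$ is small and $k$ is large. On each annulus Uhlenbeck's theorem then supplies a Coulomb gauge with $\|A\|_{W^{1,2}}\lesssim\|F\|_{L^2}$. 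By scale invariance in dimension four, $\varepsilon$-regularity upgrades this to $r^2|F_{A_k}|(x)\lesssim \big(\int_{\Omega_j'}|F|^2\big)^{1/2}$ on a slightly enlarged annulus $\Omega_j'$. The higher-derivative bounds follow from the same rescaled estimate.

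\textbf{Step 2: a global gauge on the neck.} Following Rivière, I would minimize $\int_{N_k}|A^g|^2$ over gauge transformations $g$. The Euler–Lagrange equation is $\diff^*A=0$ together with vanishing normal component on $\partial N_k$. The small-energy condition, via a continuity argument on the size of the annulus, keeps the minimizer controlled: $\|A\|_{L^4(N_k)}+\|\nabla A\|_{L^2(N_k)}\lesssim \|F\|_{L^2(N_k)}$. The gauge is then fixed.

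\textbf{Step 3: energy decay.} This is the main obstacle. I would prove that
\begin{equation*}
E(\rho):=\int_{\mathrm{B}_{2\rho}\setminus\mathrm{B}_{\rho}}|F_{A_k}|^2\lesssim \Big(\frac{\rho}{\delta}\Big)^{4}+\Big(\frac{\lambda_k}{\rho}\Big)^{4}.
\end{equation*}
With this, Step 1 gives $|F|\lesssim \rho^{-2}E^{1/2}\lesssim 1+\lambda_k^2/r^4$, which is the claimed curvature bound. Integrating from the Coulomb condition then yields $r|A_k|\lesssim r^2+\lambda_k^2/r^2$, and the $\nabla A_k$, $\nabla^2A_k$ bounds follow by elliptic estimates on rescaled annuli.

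For the decay itself, the plan is to expand the linearized operator $\diff^*\diff+\diff\diff^*$ on the cone $\R_+\times \Sph^3$ in spherical harmonics of $\Sph^3$. On 1-forms satisfying the Coulomb condition, the lowest nontrivial modes behave like $r$ and $r^{-3}$, which produces the exponents above. No zero mode survives, because there is no harmonic 1-form on $\Sph^3$. The nonlinear term $[A,A]$ and the metric error $\gdo{r^2}$ are perturbations that are small in the neck, by $\varepsilon$ and $\delta$ respectively.

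To control the angular energy by the radial energy I would use the Pohozaev identity of \cref{tensorpohozaev} on the annuli $\mathrm{B}_\rho$. Its trace part says that the radial and angular flux agree up to $\gdo{\rho^2}$ metric terms. Combined with the spectral gap on $\Sph^3$, this gives a three-annulus differential inequality $E''\ge c\,E-\text{error}$ in the variable $t=\log r$. Solving that ODE on $[\log\lambda_k,\log\delta]$ gives exponential decay in $t$ from both ends, which is exactly the power-law decay above.

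I expect the hardest part to be making the exponent exactly $4$: the spectral gap has to be sharp, and the error terms must be absorbed without losing powers. That is where I would lean most heavily on Rivière's interpolation estimates.
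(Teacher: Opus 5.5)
The paper gives no proof of this statement: it imports Rivière's neck gauge \cite{riviere2002interpolation} as a black box. Your reconstruction therefore has to stand on its own, and it has two genuine gaps.

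\textbf{The gauge normalisation.} You call the lowest Coulomb modes $r$ and $r^{-3}$, so you are treating the constant $1$-forms $\beta_j\diff x^j=\diff(\beta_jx^j)$ as trivial. They are harmonic, coclosed and exact, so they are invisible in $F$ but not in $A$, and they sit below the mode $r$. A bound on $F_{A_k}$ plus $\diff^*A_k=0$ determines $A_k$ only up to $\diff\eta$ with $\eta$ (to leading order) harmonic. In your Step 2 these modes are fixed by the Neumann conditions on $\partial\mathrm{B}_{\lambda_k}$ and $\partial\mathrm{B}_\delta$.

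Expand in spherical harmonics. The inner condition only creates modes $\lesssim\lambda_k^2r^{-3}$, and the outer one creates modes $\lesssim r$, except in degree one at the outer sphere. There it forces a constant term $\beta_j\diff x^j$, with $\beta$ determined by the degree-one component of the normal part, on $\partial\mathrm{B}_\delta$, of a Coulomb gauge of $A_\infty$ centred at $0$. This is independent of $k$ and has no reason to vanish. In other words, up to subsequences, your gauges converge away from $0$ to the Coulomb gauge of $A_\infty$ on $\mathrm{B}_\delta$ with vanishing normal component on $\partial\mathrm{B}_\delta$, and its value at $0$ is in general nonzero.

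The claimed bound, however, gives $|A_k|\le 2C\sqrt{\lambda_k}$ on $\partial\mathrm{B}_{\sqrt{\lambda_k}}$, and in the limit $A_\infty(0)=0$. The paper uses this in the proof of \cref{mainthm5}, and it underlies the identification of fibres at the concentration point. So ``integrating from the Coulomb condition'' cannot give $r|A_k|\lesssim r^2+\lambda_k^2/r^2$ in your gauge. You need an extra normalisation at the concentration point that removes this constant mode (cf. the term $\diff\eta$, $\eta=\beta_jx^j+\dots$, isolated in \cref{key}) while keeping $\diff^*A_k=0$ exactly. Options are different boundary data at $r=\delta$, or a gauge change $\exp(-\beta_jx^j)$ followed by an implicit-function argument that restores the Coulomb condition. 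This is not routine.

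\textbf{The sharp exponent.} The step you defer to Rivière is the actual content of the statement. The spectral gap you rely on is attained exactly: the curvature modes are $r^2|F|\sim r^{\pm2}$, i.e. $E\sim e^{\pm4t}$. Absorbing $[A,F]$ and the metric errors into $E''\ge(16-C\varepsilon)E$ therefore yields only $E(\rho)\lesssim(\rho/\delta)^{4-\epsilon}+(\lambda_k/\rho)^{4-\epsilon}$. That means $|F_{A_k}|\lesssim\lambda_k^{-\epsilon/4}$ on $\partial\mathrm{B}_{\sqrt{\lambda_k}}$, which is unbounded, whereas boundedness there is exactly what \cref{simplifypohozaevtensor} and \cref{asymptoticneckexpansion} need.

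Moreover, not every error is ``small in the neck''. The trace of the Pohozaev identity on $\mathrm{B}_\rho$ contains a bulk term of size $\int_{\mathrm{B}_\rho}r^2|F_{A_k}|^2\sim\lambda_k^2$ coming from the bubble. This is of the same order as the boundary flux $E(\sqrt{\lambda_k})$, and it is where the Weyl term of \cref{mainthm} comes from. It must be handled as a source term, not absorbed.

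What is missing is a second pass. First obtain a non-sharp decay. Then show that the nonlinear and metric terms have relative size integrable in $t=\log r$ along the neck. Finally, solve the linearised system mode by mode with the exact rates, in the spirit of \cref{ellipticpdev0,ellipticPDEannulus}, where an error $\omega_\lambda^\alpha$ with $\alpha>2$ beats the homogeneous modes.
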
 \noindent As pointed out in \cite[Theorem 1.1]{yin_blow-up_2023}, the identification of the fibers at the concentration point does not depend on the choice of gauge satisfying the above mentioned properties.
		
	\begin{theorem}\label{asymptoticneckexpansion} Consider a sequence $(A_k)_k$ as in \cref{bubbletree} with bubble-tree limit $(A_\infty, \widehat{A}_\infty)$ and concentration scale $\lambda_k$, in the gauge of \cref{Rivieregauge}. The following holds	
		\begin{equation*} \sup_{\partial \mathrm{B}_{\sqrt{\lambda_k}}} | F_{A_k} - F_{A_{\infty}} - \lambda_k^{-2}
		 F_{\hat{A}_{\infty}}(\lambda_k^{-1} \cdot) | \underset{k
			\rightarrow + \infty}{\rightarrow} 0. \end{equation*}
	\end{theorem}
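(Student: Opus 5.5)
In the gauge of \cref{Rivieregauge}, I would write the connection in the neck annulus $\mathrm{B}_\delta\setminus\mathrm{B}_{\lambda_k}$ as
\[
A_k = a_k + \widehat a_k + \eta_k ,
\]
where:
\begin{itemize}
\item $a_k$ is the part harmonic-like at the outer scale, modelled on $A_\infty$ in a Coulomb gauge centred at $0$;
\item $\widehat a_k$ is the part decaying like $\lambda_k^2/r^3$, modelled on $\lambda_k^{-1}\widehat{A}_\infty(\lambda_k^{-1}\cdot)$ in a Coulomb gauge near infinity (equivalently, $\psi^*\widehat A_\infty$ is smooth near $0$ in a suitable gauge, since a finite-energy Yang--Mills field extends across the point at infinity by Uhlenbeck's removable singularity theorem);
\item $\eta_k$ is an error to be shown negligible.
\end{itemize}
Concretely, since $\diff^*A_k=0$ and $A_k$ satisfies the Yang--Mills equation, $A_k$ solves the elliptic system
\[
\Delta A_k = Q(A_k,\nabla A_k)+A_k^3,
\]
with a quadratic-plus-cubic right-hand side. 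By the estimate $|A_k|\le C(r+\lambda_k^2 r^{-3})$, this right-hand side is bounded by
\[
C\,(r+\lambda_k^2r^{-3})(1+\lambda_k^2r^{-4}),
\]
which is small compared with the linear terms at the intermediate scale $r=\sqrt{\lambda_k}$. On $\partial\mathrm{B}_{\sqrt{\lambda_k}}$ all three quantities $F_{A_k}$, $F_{A_\infty}(0)$ and $\lambda_k^{-2}F_{\widehat A_\infty}(\lambda_k^{-1}\cdot)$ are $\gdo{1}$, so I need an $o(1)$ error in $\mathcal C^1$ for the connection at that scale.

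The main step is a decomposition of $A_k$ on the annulus into harmonic pieces. I would first solve $\Delta A_k=f_k$ on $\mathrm{B}_\delta\setminus \mathrm{B}_{\lambda_k/\delta}$ by splitting $A_k=H_k+N_k$, where:
\begin{itemize}
\item $H_k$ is harmonic;
\item $N_k$ solves the equation with zero boundary data.
\end{itemize}
A weighted estimate, using the explicit bound on $f_k$ and the Green function on the annulus, should give
\[
|\nabla N_k|(\sqrt{\lambda_k})\lesssim o(1).
\]
This uses the fact that the nonlinear terms integrate to something like $\sqrt{\lambda_k}+\lambda_k^2\lambda_k^{-3}\cdot(\lambda_k/\sqrt{\lambda_k})^{\dots}$, and I will need to check the exponents carefully. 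Next I expand the harmonic part $H_k$ in spherical harmonics, as a sum of $r^\ell$ and $r^{-\ell-2}$ modes. Only two modes survive at leading order at $r=\sqrt{\lambda_k}$, and they are matched to the two bubble-tree limits:
\begin{itemize}
\item Degree-one growing modes $r^1$. These are determined, after passing to the limit on the outer region $r\sim\delta$ and using $\mathcal C^\infty_{\mathrm{loc}}$ convergence to $A_\infty$ on $M\setminus\{0\}$, by the linear part of $A_\infty$ at $0$. That linear part is $\tfrac12 F_{A_\infty}(0)_{ij}x^i\diff x^j$ in Coulomb/radial gauge.
\item Decaying modes $r^{-3}$. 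By scaling, and using convergence of $\widehat A_k$ to $\widehat A_\infty$ on $\mathrm{B}_{1/\delta}$, these are determined by the leading asymptotics of $\widehat A_\infty$ at infinity. That asymptotics is $\lambda_k^2$ times the first-order term of $\psi^*\widehat A_\infty$, whose curvature is $\lambda_k^{2}r^{-4}$ times $F_{\widehat A_\infty}$ transplanted.
\end{itemize}
Higher modes contribute, at $r=\sqrt{\lambda_k}$, terms of size $(\sqrt{\lambda_k}/\delta)^{\ell}$ or $(\lambda_k/(\delta\sqrt{\lambda_k}))^{\ell}$ relative to the leading ones. These vanish as $k\to\infty$ and then $\delta\to0$.

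The curvature is then $F_{A_k}=\diff A_k+A_k\wedge A_k$. On $\partial\mathrm{B}_{\sqrt{\lambda_k}}$ the quadratic term is bounded by $|A_k|^2\lesssim(\sqrt{\lambda_k}+\lambda_k^{2}\lambda_k^{-3/2})^2=\gdo{\lambda_k}$, so it is negligible. This leaves
\[
\diff(\text{linear mode})+\diff(\text{decaying mode})+o(1)
=F_{A_\infty}(0)+\lambda_k^{-2}F_{\widehat A_\infty}(\lambda_k^{-1}\cdot)+o(1).
\]
The last identification also uses $F_{A_\infty}=F_{A_\infty}(0)+\gdo{r}$ and the analogous expansion of $F_{\widehat A_\infty}$ near infinity.

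I expect the main obstacle to be the matching: showing that the coefficients of the harmonic modes of $H_k$ converge to those of $A_\infty$ and $\widehat A_\infty$ written in the very gauge of \cref{Rivieregauge}. Two issues arise here:
\begin{itemize}
\item The gauges in which convergence in \cref{bubbletree} holds differ from Rivière's gauge by gauge transformations, which must be shown to converge to constants on the neck.
\item Constant gauge ambiguity: constants act by conjugation, which is harmless since the statement is then read with fibres identified at $0$.
\end{itemize}
I would handle this using the remark after \cref{Rivieregauge}: two Coulomb gauges with the stated decay differ by $g$ with $\diff^*(g^{-1}\diff g)$ controlled, hence $g$ is nearly harmonic and nearly constant on the annulus.
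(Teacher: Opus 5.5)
The decomposition idea (harmonic modes on the neck plus a nonlinear remainder, growing modes matched to $A_\infty$, decaying ones to $\widehat A_\infty$) is the right philosophy. But the argument breaks at the step you flagged, and more careful exponent-checking will not fix it. The dangerous modes are not the higher ones. They are the \emph{lower} decaying modes $\alpha_j r^{-2}\diff x^j$, which your mode count omits. Their curvature on $\partial\mathrm{B}_{\sqrt{\lambda_k}}$ is of order $|\alpha_j|\lambda_k^{-3/2}$, so you need $\alpha_j=o(\lambda_k^{3/2})$. Between $r=\lambda_k/\delta$ and $r=\sqrt{\lambda_k}$ they grow by a factor $\delta\lambda_k^{-1/2}$ relative to the $r^{-3}$ bubble mode. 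Meanwhile $\mathcal{C}^\infty_{\mathrm{loc}}$ convergence at scale $\lambda_k$ controls them at best at the level $o(\lambda_k)$. Even the a priori bound of \cref{Rivieregauge} only gives $O(\lambda_k^{3/2})$, i.e. an $O(1)$ curvature error.

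Your zero-Dirichlet splitting makes this explicit. The spherical mean of $f_k$ near the inner boundary is of size up to $\lambda_k^4 r^{-7}$. Through the Green function it produces in $N_k$ a tail $c_k r^{-2}\diff x^j$ with $c_k$ of order $\lambda_k\delta^3$. This is $\lambda_k$ times a tail integral of the bubble's nonlinearity, which nothing forces to vanish, and even if it did the finite-$k$ error would only be $o(\lambda_k)$. So $|\nabla N_k|(\sqrt{\lambda_k})$ is of order $\delta^3\lambda_k^{-1/2}$, not $o(1)$. Correspondingly $H_k$ carries the opposite $r^{-2}$ mode, contrary to your claim that only its $r$ and $r^{-3}$ modes matter.

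Two ingredients of \cref{key} are missing. First, the particular solution must be normalised at the middle sphere, not at the ends. \cref{ellipticPDEannulus} fixes the harmonic part $\vartheta\in\overline{\mathcal H}_\alpha$ (modes $r^0,r,r^{-2},r^{-3}$) by matching $\Phi$ on $\partial\mathrm{B}_{\sqrt\lambda}$. A blow-up argument then gives a remainder with $r|\zeta|+r^2|\nabla\zeta|\le C\omega_\lambda^\alpha$, $\alpha\in(2,3)$, uniformly on the annulus, which contributes $o(1)$ curvature at $\sqrt{\lambda_k}$. Second, the Coulomb condition must be used on the harmonic part, not just to write the equation. The function $v=\vartheta(r\partial_r)$ satisfies $\Delta v=-2\diff^*\zeta$. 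This forces $r^{-1}|\alpha_j|+r^{-2}|\mu_{ij}|\le C\omega_\lambda^\alpha$, hence $\alpha_j=O(\lambda_k^{(1+\alpha)/2})=o(\lambda_k^{3/2})$.

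The paper applies this to the error $e_k=A_k-A_\infty-\lambda_k^{-1}\widehat A_\infty(\lambda_k^{-1}\cdot)$ rather than to $A_k$. This gives $\diff e_k=\Omega_k+\lambda_k^2\psi^*\widehat\Omega_k+O(r^{-2}\omega_{\lambda_k}^\alpha)$ with constant-coefficient forms $\Omega_k,\widehat\Omega_k$. It then removes $\Omega_k$ and $\widehat\Omega_k$ without matching coefficients: it evaluates this uniform estimate on $\mathrm{B}_1\setminus\mathrm{B}_\delta$ and $\mathrm{B}_{\lambda_k/\delta}\setminus\mathrm{B}_{\lambda_k}$, where $\diff e_k\to0$, and obtains $\limsup_k|\Omega_k|\le C\delta^{\alpha-2}$. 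It is precisely $\alpha>2$ that makes this work.
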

	
	\noindent In the following we will use the following notation: \begin{equation*}
	    \omega_{\lambda}(x) \coloneq |x| + \lambda{|x|}^{-1}.
	\end{equation*}
	
	\begin{proposition} \label{key} For every $\alpha\in (2,3)$,  there exists $C > 0$ with the following property: for all $\lambda \in (0, 1
		/ 2)$ and $e \in \mathrm{W}^{1, 2} (\mathrm{B}_1 \backslash
		\mathrm{B}_{\lambda}, \Lambda^1 \mathbf{R}^4 \otimes \mathfrak{g})$, if $e$
		satisfies in $\mathrm{B}_1 \backslash \mathrm{B}_{\lambda}$: \begin{align*}
		        r |e| &\leq  \omega_{\lambda}^2, \\
		       r^3 | \Delta e | &\leq \omega_{\lambda}^4,\\
		       \diff^{\ast} e &= 0,
		\end{align*} then
		\begin{equation*} e = \sum_{1 \leq i < j \leq 4} (r^{- 4} a_{i j} + b_{i j}) \phi^{i j}
		+ \tilde{e} + \diff \eta \end{equation*}
		where $\phi^{i j} = (x^i \diff x^j - x^j \diff x^i) / 2$, for some $a_{i j}, b_{i j} \in \mathfrak{g}$, $\tilde{e}$ and $\eta$ such that \begin{itemize}
		   \item \begin{equation}
		       \eta =  \sum_{1 \leq j \leq 4}  \beta_j x^j +
	\frac12 \sum_{1 \leq i \leq j \leq 4} \nu_{i j} x^i x^j,
		   \end{equation}  \item in $\mathrm{B}_{1 / 2} \backslash \mathrm{B}_{2
			\lambda}$
		\begin{equation} r|\tilde{e}| +  r^2 | \diff \tilde{e} | \leq C \omega_{\lambda}^\alpha \label{key1}, \end{equation} 
		
		\item  \begin{equation}
		   \sum_{1 \leq j \leq 4}  r|\beta_j| + 
	 \sum_{1 \leq i \leq j \leq 4} r^2 |\nu_{i j}| +  \sum_{1 \leq i < j \leq 4}  r^{-2}|a_{i j}| + r^2|b_{i j}| \leq C  \omega_\lambda^2 \label{key2}.
		\end{equation}
		\end{itemize}
	\end{proposition}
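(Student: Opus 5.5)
The strategy is to split $e$ into a particular solution of the Poisson equation, which carries the small source $\Delta e$ and satisfies the improved decay \cref{key1}, plus a harmonic $1$-form on the annulus. The harmonic part is then expanded in spherical harmonics. Only the critical low modes cannot be absorbed into $\tilde e$, and the coclosed condition $\diff^\ast e=0$ restricts them to the terms listed in the statement. Throughout I use the Euclidean Laplacian acting componentwise on $e=e_i\,\diff x^i$, so everything reduces to scalar $\mathfrak g$-valued functions.

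\emph{Step 1 (particular solution).} Set $f=\Delta e$, so that $|f|\le r^{-3}\omega_\lambda^4\lesssim r+\lambda^4r^{-7}$. I extend $f$ by zero outside $\mathrm{B}_1\setminus\mathrm{B}_\lambda$ and split it dyadically as $f=\sum_m f\mathbf 1_{A_m}$ with $A_m=\{2^{-m-1}\le r\le 2^{-m}\}$. For each piece I take the Newtonian potential $u_m=\Gamma\ast(f\mathbf 1_{A_m})$, but subtract its first few multipole terms: the monopole and dipole terms of the exterior expansion, and the terms of degree $\le 2$ of the interior Taylor expansion. After these subtractions $u_m$ decays like $r^{-5}$ far outside $A_m$ and vanishes to order $r^{3}$ far inside. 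Summing over $m$, the two pieces of the source contribute as follows:
\begin{itemize}
\item the part $r^{-3}\cdot r^4$ of the source gives a contribution bounded by $C r^3\le C r^{-1}\omega_\lambda^{\alpha}$;
\item the part $\lambda^4 r^{-7}$ gives $C\lambda^4r^{-5}\le C\lambda^\alpha r^{-1-\alpha}$ on $r\ge\lambda$.
\end{itemize}
Since $\alpha\in(2,3)$ the geometric series converge (this is exactly where $\alpha<3$ and $\alpha>2$ are used), so $e_p=\sum_m u_m$ satisfies $r|e_p|\le C\omega_\lambda^\alpha$. Interior elliptic estimates on dyadic annuli then give the same bound for $r^2|\diff e_p|$ on $\mathrm{B}_{1/2}\setminus\mathrm{B}_{2\lambda}$. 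The subtracted multipole terms are harmonic polynomials and harmonic multipoles of low degree, and I move them into the harmonic part.

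\emph{Step 2 (harmonic part).} The form $e_h=e-e_p$ is harmonic on the annulus with $r|e_h|\lesssim\omega_\lambda^2$ (using $\omega_\lambda^\alpha\lesssim\omega_\lambda^2$ there). I expand each component of $e_h$ as $\sum_l (c_l r^l+d_l r^{-l-2})Y_l$. Evaluating the $L^2(\Sph^3)$ norms at $r\approx1$ and $r\approx\lambda$ bounds the coefficients: $|c_l|\lesssim C^l$ and $|d_l|\lesssim \lambda^{l+2}C^l$. Consequently, on $\mathrm{B}_{1/2}\setminus\mathrm{B}_{2\lambda}$ all modes with $l\ge2$ sum to something bounded by $C(r^2+\lambda^{4}r^{-4})\le Cr^{-1}\omega_\lambda^\alpha$, and these go into $\tilde e$. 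What remains are the modes $l=0,1$. Writing them out:
\begin{itemize}
\item constants $\beta_j\diff x^j$, which equal $\diff(\beta_jx^j)$;
\item decaying monopoles $r^{-2}c_j\diff x^j$;
\item linear terms $M_{ij}x^i\diff x^j$, whose symmetric part is $\diff\big(\tfrac12\nu_{ij}x^ix^j\big)$ and whose skew part is $b_{ij}\phi^{ij}$;
\item dipoles $N_{ij}x^ir^{-4}\diff x^j$.
\end{itemize}
The coefficient bounds \cref{key2} follow from the bounds on $c_l,d_l$ for $l=0,1$.

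\emph{Step 3 (using $\diff^\ast e=0$): the main obstacle.} The monopoles $r^{-2}c_j\diff x^j$ and the symmetric and trace parts of the dipoles have size comparable to $\lambda^2r^{-3}$ near $r=\lambda$. That exceeds the allowed $r^{-1}(\lambda/r)^\alpha$, so these terms must vanish rather than be absorbed into $\tilde e$. To show this I would first arrange in Step 1 that $\diff^\ast e_p$ is itself controlled: $\diff^\ast$ commutes with $\Gamma\ast$ up to the subtracted low-order terms, and $\diff^\ast f=\Delta\diff^\ast e=0$. Hence $\diff^\ast e_h=0$ holds exactly mode by mode, up to corrections again of size $\omega_\lambda^\alpha$. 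Then I compute $\diff^\ast$ of each surviving mode:
\begin{itemize}
\item $\diff^\ast(r^{-2}c_j\diff x^j)=2c_jx^jr^{-4}$, while the other low modes produce divergences of different homogeneity, so $c_j=0$;
\item $\diff^\ast(N_{ij}x^ir^{-4}\diff x^j)$ is a combination of $r^{-4}\operatorname{tr}N$ and $x^ix^jN_{ij}r^{-6}$, which forces the symmetric part of $N$ to vanish, leaving only $a_{ij}r^{-4}\phi^{ij}$;
\item the growing terms are automatically compatible, since $\diff^\ast\phi^{ij}=0$ and the trace of $\nu$ contributes a constant divergence that is matched by the particular solution.
\end{itemize}
The delicate point is making this exact cancellation rigorous. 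The obstruction terms sit exactly at the critical homogeneity, so any loss in Step 1's control of $\diff^\ast e_p$ would leave a residual $\lambda^2 r^{-3}$ term. I would therefore first try to carry out the multipole subtraction in Step 1 in a way that commutes exactly with $\diff^\ast$.
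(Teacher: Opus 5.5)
Your overall architecture matches the paper's: a particular solution with the improved weight, a harmonic remainder reduced to modes of degree at most $1$, and coclosedness to dispose of the non-coclosed low modes. The paper obtains the split $e=\zeta+\vartheta$ from the blow-up argument of Theorem~\ref{ellipticPDEannulus} rather than from dyadic potentials.

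However, Step 3 has a genuine gap, and it sits exactly where the statement itself fails. On $\partial\mathrm{B}_r$, with $\theta=x/r$, one has $\diff^\ast(N_{ij}x^ir^{-4}\diff x^j)=r^{-4}(4N_{ij}\theta^i\theta^j-\operatorname{tr}N)$. This vanishes when $N$ is a multiple of the identity: the pure-trace dipole $r^{-4}x^i\diff x^i=r^{-3}\diff r=-\tfrac12\diff(r^{-2})$ is closed and coclosed. So $\diff^\ast e=0$ says nothing about its coefficient $\kappa$. Yet, as you observe, the a priori bound $|\kappa|\lesssim\lambda^2$ is too weak to put this term into $\tilde e$.

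No refinement of Step 1 can fix this. Take $e=\lambda^2r^{-3}\diff r\otimes q$ with $q\in\mathfrak g$ a fixed unit vector; it satisfies all three hypotheses. In any decomposition of the stated form, the $\phi^{ij}$ have no radial component and $\beta_j\theta^j$ has mean zero. Hence the mean of $\tilde e(\partial_r)$ over $\partial\mathrm{B}_r$ equals $\lambda^2r^{-3}q-r\tau$, with $\tau=\frac14\sum_i\nu_{ii}$. Bounding it by $Cr^{-1}\omega_\lambda^\alpha$ at $r=\sqrt\lambda$ and $r=2\sqrt\lambda$ gives $|q-\tau|+|\frac18 q-2\tau|\le C'\lambda^{(\alpha-2)/2}$, which is impossible for small $\lambda$.

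The proposition therefore needs an extra term, e.g.\ $\kappa r^{-2}$ in $\eta$ with $|\kappa|\le C\lambda^2$. This is harmless downstream, since $\diff\diff(r^{-2})=0$ and $\diff r$ pairs to zero with $\iota_{\partial_r}$ of a $2$-form. The paper's proof has the same oversight. In its computation of $\Delta v$, the contribution of $\operatorname{tr}\mu$ actually cancels because $r^{-4}|x|^2=r^{-2}$ is harmonic. So its bound on $r^{-2}|\mu_{ij}|$ is only justified for the trace-free part of $\mu$.

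For the genuinely non-coclosed modes (the monopoles $c_j$ and the trace-free part $N^0$ of $N^{\mathrm{sym}}$), you are aiming at the wrong target. Exact vanishing is not available: the split $e=e_p+e_h$ is not canonical, and truncating the source destroys $\diff^\ast f=0$, so $\diff^\ast e_p\neq 0$. It is also not needed. Since $\diff^\ast e=0$, the codifferential of everything except these low modes is bounded by $Cr^{-2}\omega_\lambda^\alpha$, by the interior estimates you already use. Projecting on each sphere onto the mutually orthogonal profiles $\theta^j$ and $N^0_{ij}\theta^i\theta^j$ then yields $r^{-1}|c_j|+r^{-2}|N^0|\le C\omega_\lambda^\alpha(r)$ for every $r\in(2\lambda,1/2)$. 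That is exactly what is required to absorb these terms into $\tilde e$; the paper does this via $v=\vartheta(r\partial_r)$ and $\Delta v=-2\diff^\ast\zeta$.

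Step 1 also needs repair. Subtracting both the interior Taylor terms and the exterior multipoles from the same dyadic piece destroys both decay properties you claim. For the pieces where $|f|\approx\lambda^4r^{-7}$, the subtracted constants alone can sum to order $\lambda^{-1}$. Moreover, after removing monopole and dipole the far field decays like $r^{-4}$, giving $\lambda^3r^{-4}$ rather than $\lambda^4r^{-5}$. What works is to subtract interior terms of degree $\le1$ only for pieces at scales $\gtrsim\sqrt\lambda$, and exterior monopole and dipole only for pieces at scales $\lesssim\sqrt\lambda$; then $\lambda^3r^{-4}\le\lambda^\alpha r^{-1-\alpha}$ because $\alpha<3$.

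Likewise, in Step 2 you only get $|d_l|\lesssim\lambda^{l+1}$, which is still enough for the same reason. The bound on $\beta_j$ in \eqref{key2} requires evaluating the $l=0$ mode at radii of order $\sqrt\lambda$, not only at $r\approx1$ and $r\approx\lambda$.
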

	
	\begin{proof} The first two conditions imply that \cref{ellipticPDEannulus} and \cref{rkellipticPDEannulus} apply to the coefficients of $e$ in a coordinate frame, namely we have $e = \zeta + \vartheta$
		where, in $\mathrm{B}_{1 / 2} \backslash \mathrm{B}_{2 \lambda}$,
		\begin{equation} r | \zeta | + r^2 | \nabla \zeta | \leq C \omega_{\lambda}^\alpha \label{zetaestimate} \end{equation}
		and $\vartheta$ is harmonic of degree less than $1$ that is $\vartheta$ is of the form
		\begin{equation} \vartheta = \sum_{1 \leq j \leq 4} (r^{- 2} \alpha_j + \beta_j) \diff x^j +
		\sum_{1 \leq i \leq j \leq 4} (r^{- 4} \mu_{i j} + \nu_{i j}) \psi^{i
			j} + \sum_{1 \leq i < j \leq 4} (r^{- 4} a_{i j} + b_{i j}) \phi^{i j} \label{hformula}
		\end{equation}
		where $\phi^{i j} = (x^i \diff x^j - x^j \diff x^i) / 2$, $\psi^{i j} =
		(x^i \diff x^j + x^j \diff x^i) / 2$ and the $\alpha_j$'s, $\beta_j$'s, $\mu_{ij}$'s, $\nu_{ij}$'s, $a_{ij}$'s, $b_{ij}$'s are constant elements of $\mathfrak{g}$. 
		\
		
		\noindent Since $r |\vartheta| = r|e-\zeta|\leq C \omega_{\lambda}^2$ in $\mathrm{B}_{1 / 2} \backslash \mathrm{B}_{2 \lambda}$, we have a first bound on the coefficients involved in \cref{hformula}, namely for $r\in (1/2,2\lambda)$, \begin{equation}
		    \sum_{1 \leq j \leq 4} r(r^{- 2} |\alpha_j| + |\beta_j|) +
		\sum_{1 \leq i \leq j \leq 4} r^2(r^{- 4} |\mu_{i j}| + |\nu_{i j}|) + \sum_{1 \leq i < j \leq 4} r^2(r^{- 4} |a_{i j}| + |b_{i j}|) \leq C\omega_{\lambda}^2 .
		\end{equation} This is enough to prove \eqref{key2} but is not precise enough for \eqref{key1}. We will use the assumption $\diff^*e$ to partly tighten the previous estimate. 
		\
		
		\noindent Notice that $\vartheta$ can be written in the form
		\begin{equation*} \vartheta = \diff f - \left( \sum_{1 \leq j \leq 4} \alpha_j x^j \right) \diff
		r^{- 2} -\frac{1}{2} \left( \sum_{1 \leq i \leq j \leq 4} \mu_{i j} x^i x^j
	 \right) \diff r^{- 4} + \sum_{1 \leq i < j \leq 4}
		(r^{- 4} a_{i j} + b_{i j}) \phi^{i j} \end{equation*}
		where $f = \sum_{1 \leq j \leq 4} (r^{- 2} \alpha_j + \beta_j) x^j +
		\frac{1}{2} \sum_{1 \leq i \leq j \leq 4} (r^{- 4} \mu_{i j} + \nu_{i j})
		x^i x^j$ is harmonic. Then $v \coloneq \vartheta (r \partial_{r})$ satisfies
		\begin{equation}  v = r \partial_{r} f + 2 r^{- 2} 
		\sum_{1 \leq j \leq 4} \alpha_j x^j + 2 r^{- 4}\sum_{1 \leq i \leq j \leq 4}
		\mu_{i j} x^i x^j. \label{vformula} \end{equation}
		Since $f$ is harmonic then so is $r \partial_{r} f$ and therefore
		\begin{equation} \Delta v = - \frac{1}{r^3} \frac{\partial}{\partial r} \left(
		r^3 \frac{\partial}{\partial r} v \right) + \frac{1}{r^2}
		\Delta_{\mathbf{S}^3} v = 8 r^{- 3}  \sum_{1 \leq j \leq 4}
		\alpha_j \frac{x^j}{r} + 16 r^{- 4} \sum_{1 \leq i \leq j \leq 4}
		\mu_{i j} \frac{x^i x^j }{r^{2}}  \label{Dvformula} \end{equation}
		Additionally 
		\begin{equation} \Delta v = (\Delta \vartheta)(r \partial_r)+ 2 \diff^{\ast} \vartheta =2 \diff^{\ast}(e -\zeta)= - 2 \diff^{\ast} \zeta \end{equation}
		and by \cref{zetaestimate} we obtain \begin{equation}
		     | \Delta v | \leq r^{- 2} \omega_{\lambda}^\alpha. \label{Dvest}
		\end{equation} Notice that the
		functions $((x^j)_{1 \leq j \leq 4}, (x^i x^j)_{1 \leq i \leq j \leq 4})$,
		defined on $\mathbf{S}^3$, are linearly independent. The combination of \cref{Dvformula,Dvest} implies that for
		$r \in (2 \lambda, 1 / 2)$
		\begin{equation} r^{- 1} | \alpha_j | + r^{- 2} | \mu_{i j} | \leq C \omega_{\lambda}^\alpha  \label{vcoeffest}. \end{equation}
		Finally, if $\tilde{e} = \zeta + \sum_{1 \leq j \leq 4} r^{- 2} \alpha_j \diff x^j + \sum_{1 \leq i \leq j \leq 4} r^{- 4} \mu_{i j} \psi^{i j}$ then
		\begin{equation} \diff \tilde{e} = \diff \zeta - \sum_{1 \leq j \leq 4} \alpha_j \diff x^j
		\wedge \diff r^{- 2} -  \sum_{1 \leq i \leq j \leq 4} \mu_{i j}
		\psi^{i j} \wedge \diff r^{- 4} \end{equation}
		and
		\begin{equation*} r|\tilde{e}|+ r^2 | \diff \tilde{e} | \leq   r|\zeta| + r^2 | \diff \zeta | +C \sum_{1 \leq
			j \leq 4} | \alpha_j | r^{- 1} +C \sum_{1 \leq i \leq j \leq 4} |
		\mu_{i j} | r^{- 2} \leq C \omega_{\lambda}^\alpha \end{equation*} which is precisely estimate \eqref{key1}. 

	\end{proof}
	
	\begin{proof}[Proof of \cref{asymptoticneckexpansion}]
	Define the error connection $1$-form $e_k$ by
		\begin{equation*} e_k = A_k {- A_{\infty}}  - \lambda_k^{-1} \hat{A}_{\infty}(\lambda_k^{-1} \cdot).
		\end{equation*} From the Yang-Mills equation, of the form $\Delta A = A \cdot \nabla A
		+ A^3$, we obtain
		\begin{equation*} r^3 | \Delta A_k | \leq C \omega_{\lambda_k}^4, \end{equation*}
		where the operator on the right is at first the Laplace-Beltrami operator for the metric $h$ but can then be replaced by the flat Laplacian on $\R^4$, since their difference is controlled thanks to \cref{flatornot}. From
		this we deduce that similar estimates hold for $e_k$ that is
		\begin{align*} \diff^{\ast} e_k &= 0,\\
		r | e_k | + r^2 | \nabla e_k | &\leq C	\omega_{\lambda_k}^2,\\
	 r^3 | \Delta e_k | &\leq C \omega_{\lambda_k}^4 . \end{align*}
		Now \cref{key} applies to $e_k$: there exists 2-forms $\Omega_k$ and $\hat{\Omega}_k$, with
		constant coefficients bounded in $k$, such that
		\begin{equation*} r^2 | \diff e_k - \Omega_k - \lambda_k^2 \psi^\ast\hat{\Omega}_k | \leq C
		\omega_{\lambda_k}^\alpha \end{equation*} for any $\alpha\in (2,3)$,  where $\Omega_k$ and $\hat{\Omega}_k $ are 2-forms on $\mathbf{R}^4$ with constant coefficients and $\psi(x)=x/|x|^2$. Notice that $r^2 | \diff e_k
		|$ converges to $0$ uniformly on annuli of the form $\mathrm{B}_1 \backslash
		\mathrm{B}_{\delta}$ and $\mathrm{B}_{\lambda_k / \delta} \backslash
		\mathrm{B}_{\lambda_k}$ for all $\delta \in (0, 1)$. We deduce for $x\in \mathrm{B}_1 \backslash
		\mathrm{B}_{\delta}$ \begin{equation*} \limsup_{k\to\infty }r^2 |\Omega_k| \leq \lim_{k\to\infty} \left( C(r+\lambda_k r^{-1})^\alpha + r^2 |\diff e_k | + \frac{\lambda_k^2}{r^2} |\hat{\Omega}_k |\right) = C r^{\alpha} \end{equation*} and in particular \begin{equation*} \limsup_{k\to\infty } |\Omega_k| \leq C \delta^{\alpha-2}\end{equation*} which implies $\Omega_k\to 0$ by letting $\delta$ go to $0$, since $\alpha >2$. Similarly, $\hat{\Omega}_k\to 0$. This proves that \begin{equation*}\sup_{\partial
			\mathrm{B}_{\sqrt{\lambda_k}}}|\diff e_k| \leq \lambda_k^{-1} \lambda_k^{\alpha/2} +  |\Omega_k| + |\hat{\Omega}_k| 
			\to 0\end{equation*}  Finally, since $R_k \coloneq 	F_{A_k} - F_{A_{\infty}} - \lambda_k^{-2} F_{\hat{A}_{\infty}}(\lambda_k^{-1} \cdot)$ satisfies \begin{equation*}
			     R_k =  \diff e_k + [A_{\infty}, \lambda_k^{-1}
			\hat{A}_{\infty}(\lambda_k^{-1} \cdot)] + [e_k, A_{\infty} + \lambda_k^{-1}
			\hat{A}_{\infty}(\lambda_k^{-1} \cdot)] + e_k \wedge e_k
			\end{equation*} we conclude that $R_k$ is uniformly converging to $0$ at scale $\partial
		\mathrm{B}_{\sqrt{\lambda_k}}$.
	\end{proof}
	\begin{remark}
	    \label{estimategeneralbubbletree} Notice that the estimate of \cref{asymptoticneckexpansion} still holds in the general bubble-tree case: the proof remains valid because by construction other bubbles cannot be seen at the boundary of the neck regions.
	\end{remark}

\section{Proof of the main theorem} \label{proofmainthm}
	\subsection{Infinitesimal conformal symmetry}
		
		\begin{theorem}\label{mainthm2}
		   Consider a sequence $(A_k)_k$ as in \cref{bubbletree} with bubble-tree limit $(A_\infty, \widehat{A}_\infty)$. Denote by $F^\infty$, $\widehat{F}^\infty$ and $\tilde{F}^\infty$ the curvature tensors of $A_\infty$, $\widehat{A}_\infty$ and $\psi^*\widehat{A}_\infty$ respectively, where $\psi:x \mapsto x/|x|^2$ is the inversion. Consider $\gamma$ the degree 2-term of the Taylor expansion of $h$ in geodesic coordinates at $0$. Then the $2$-tensor $\mathrm{P}$ defined below is traceless symmetric:
		    \begin{equation} \label{obstructionbubbling} \mathrm{P} = 
		    \left( F^{\infty} \circ\tilde{F}^{\infty} \right)(0) -\frac{1}{\pi^2} \int_{\R^4} \left(\frac{1}{2} \langle  S_{\widehat{F}^{\infty}} , \nabla\gamma\rangle \otimes r \diff r -S_{\widehat{F}^{\infty}} \circ \gamma + \frac{1}{4}\langle S_{\widehat{F}^{\infty}}, \gamma \rangle \xi \right)\mathrm{vol}.
		    \end{equation}
		\end{theorem}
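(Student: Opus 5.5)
The plan is to start from \cref{simplifypohozaevtensor}. For each $k$, the tensor $\lambda_k^{-2}\mathrm{P}_k$ is traceless symmetric by \cref{tensorpohozaev}. The space of traceless symmetric tensors is closed, so it is enough to show that the right-hand side of \cref{approxlocalPoh} converges to $\pi^2$ times the tensor $\mathrm{P}$ of \cref{obstructionbubbling}. Concretely, we let $k\to\infty$ and then $\delta\to 0$, and treat the boundary term and the bulk term separately. The remainder $\varpi_{k,\delta}$ disappears in this double limit by construction.

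\textbf{Bulk term.} On $\mathrm{B}_{1/\delta}$ the rescaled connections $\widehat{A}_k$ converge smoothly to $\widehat{A}_\infty$ up to gauge (\cref{bubbletree}). The stress-energy tensor is gauge invariant, so $S_{F_{\widehat{A}_k}}\to S_{\widehat{F}^\infty}$ uniformly on $\mathrm{B}_{1/\delta}$. Since $\gamma$ is a fixed quadratic polynomial, the integral over $\mathrm{B}_{1/\delta}$ converges to the same integral with $\widehat{F}^\infty$. We then let $\delta\to0$. The full integral over $\R^4$ converges absolutely, because in the flat setting the inversion $\psi$ is conformal and $\psi^*\widehat{A}_\infty$ extends smoothly across $0$ by removable singularities. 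This gives $|\widehat{F}^\infty|\le C|x|^{-4}$ at infinity, so the integrand is $\gdo{r^2\cdot r^{-8}}$, which is integrable. This produces the second term of \cref{obstructionbubbling}, up to the factor $\pi^2$.

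\textbf{Boundary term (the main point).} On $\partial\mathrm{B}_{\sqrt{\lambda_k}}$, \cref{asymptoticneckexpansion} gives
\[
F_{A_k}=F^\infty+\lambda_k^{-2}\widehat{F}^\infty(\lambda_k^{-1}\cdot)+o(1).
\]
Both main pieces are $\gdo{1}$ there: the first by smoothness, the second because $\lambda_k^{-2}|\widehat{F}^\infty(\lambda_k^{-1}x)|\le C\lambda_k^2|x|^{-4}=\gdo{1}$ at $|x|=\sqrt{\lambda_k}$. Next we write $\lambda_k^{-2}\widehat{F}^\infty(\lambda_k^{-1}x)=\lambda_k^{2}(\psi^*\tilde F^\infty)(\lambda_k x)$, using $\psi^*\psi^*=\mathrm{id}$. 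At $|x|=\sqrt{\lambda_k}$ we have $\lambda_k x/|x|^2$ of size $\sqrt{\lambda_k}\to0$, so this equals, up to $o(1)$, the pullback by $\psi$ of the constant 2-form $\tilde F^\infty(0)$, evaluated at $\lambda_k^{-1/2}x\in\Sph^3$. Similarly $F^\infty(\sqrt{\lambda_k}\theta)\to F^\infty(0)$. Expanding the quadratic $S$ then gives three contributions on $\Sph^3$:
\begin{itemize}
\item $S_{F^\infty(0)}$,
\item $S_{\psi^*\tilde F^\infty(0)}$,
\item the cross term $-(F^\infty(0)\circ\psi^*\tilde F^\infty(0)+\psi^*\tilde F^\infty(0)\circ F^\infty(0))+\frac12\langle F^\infty(0),\psi^*\tilde F^\infty(0)\rangle\xi$.
\end{itemize}
Each is tensored with $\iota_{\partial_r}$ and $r\,\diff r$ and integrated over $\Sph^3$.

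This explicit integration is the main obstacle. For a constant 2-form $\omega$, $\int_{\Sph^3}\iota_{\partial_r}S_\omega\otimes r\diff r=\int_{\Sph^3}S_\omega(\theta,\cdot)\otimes\theta$ is a multiple of $S_\omega$, hence traceless symmetric, so it can be discarded. The same holds for $S_{\psi^*\omega}$: the inversion acts by the reflection $I-2\theta\theta^T$, which is conformal, so the stress tensor transforms covariantly and one can compute directly. For the cross term we use $(\psi^*\omega)_\theta=R_\theta^T\omega R_\theta$ with $R_\theta=I-2\theta\theta^T$. The integrand is then a polynomial of degree at most 4 in $\theta$, and we integrate it with the moment identities on $\Sph^3$, namely $\int\theta_i\theta_j=\frac{\pi^2}{2}\delta_{ij}$ together with the fourth-moment formula. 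We expect the result to equal $\pi^2\,F^\infty(0)\circ\tilde F^\infty(0)$ modulo traceless symmetric tensors. The constant $\pi^2$, or $|\Sph^3|/2$, is what produces the normalization $\frac1{\pi^2}$ in \cref{obstructionbubbling}. The $o(1)$ errors integrate to $o(1)$ because all pieces are uniformly bounded. Dividing by $\pi^2$ and combining with the bulk term concludes the proof. Since the sign and constant bookkeeping in this step is delicate, I would check it first on $\omega=\diff x^1\wedge\diff x^2$.
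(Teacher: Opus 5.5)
Your outline is the paper's own proof. You pass to the limit in \cref{simplifypohozaevtensor} modulo traceless symmetric tensors, treat the bulk term by smooth convergence plus the $|x|^{-4}$ decay of $\widehat F^\infty$, and treat the boundary term via \cref{asymptoticneckexpansion}. The pieces you actually argue are sound. For the pure bubble piece, your covariance argument works: with $R_\theta=I-2\theta\theta^T$ and $\eta=\tilde F^\infty(0)$, the second and fourth moments give $\int_{\Sph^3}\iota_\theta S_{R_\theta^*\eta}\otimes\theta=-\tfrac{\pi^2}{6}S_\eta$. The paper instead keeps $S_{\sigma_k^*\tilde F^\infty}$ exact and kills it with the flat Pohozaev identity for the rescaled bubble on $\mathrm{B}_{\sqrt{\lambda_k}}$.

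There is a small slip: $\lambda_k^{-2}\widehat F^\infty(\lambda_k^{-1}x)$ is not $\lambda_k^{2}(\psi^*\tilde F^\infty)(\lambda_k x)$. The correct identity is $\lambda_k^{-2}\widehat F^\infty(\lambda_k^{-1}\cdot)=\sigma_k^*\tilde F^\infty$ with $\sigma_k(x)=\lambda_k x/|x|^2$, whose differential on $\partial\mathrm{B}_{\sqrt{\lambda_k}}$ is exactly $R_\theta$. Your subsequent sentence is correct nonetheless.

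The genuine gap is that the one non-routine step is only asserted. You write that the cross term should give $\pi^2F^\infty(0)\circ\tilde F^\infty(0)$ modulo traceless symmetric tensors, and say you "expect" this. That identity is the entire content of the theorem, and its orientation matters. Modulo traceless symmetric tensors, $\omega\circ\eta$ and $\eta\circ\omega$ have opposite skew parts, and the Weyl bulk integral can have a nonzero skew part (see \cref{mainthm3}). So getting the orientation wrong would yield a different statement. A check on the single form $\omega=\diff x^1\wedge\diff x^2$ cannot settle a bilinear identity in $(\omega,\eta)$.

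The missing computation is short and needs no fourth moments. Put $\omega=F^\infty(0)$ and $\tau_\theta=\diff r\wedge\iota_\theta\eta$, so that $R_\theta^*\eta=\eta-2\tau_\theta$. Using $\omega(\theta,\theta)=\eta(\theta,\theta)=0$ and the tensor normalization of $\langle\cdot,\cdot\rangle$ (the one making $S$ traceless), one checks
\[
\iota_\theta(\omega\circ\tau_\theta)=\langle\iota_\theta\omega,\iota_\theta\eta\rangle\,\diff r,\qquad \iota_\theta(\tau_\theta\circ\omega)=\iota_\theta(\eta\circ\omega),\qquad \langle\omega,\tau_\theta\rangle=2\langle\iota_\theta\omega,\iota_\theta\eta\rangle .
\]
Consider the cross term $C_\theta=\tfrac12\langle\omega,R_\theta^*\eta\rangle\xi-\omega\circ R_\theta^*\eta-R_\theta^*\eta\circ\omega$. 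The $\theta$-dependence cancels after contraction: $\iota_\theta C_\theta=\iota_\theta K$ with the constant tensor $K=\tfrac12\langle\omega,\eta\rangle\xi-\omega\circ\eta+\eta\circ\omega$. The second-moment formula then gives $\int_{\Sph^3}\iota_\theta K\otimes\theta\,\mathrm{vol}_{\Sph^3}=\tfrac{\pi^2}{2}K^T$, and
\[
\tfrac{\pi^2}{2}K^T-\pi^2\,\omega\circ\eta=\tfrac{\pi^2}{2}\bigl(S_{\omega+\eta}-S_\omega-S_\eta\bigr),
\]
which is traceless symmetric. This is exactly the paper's computation. With it inserted, your argument is complete and produces the stated normalization $\tfrac{1}{\pi^2}$.
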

	
	\begin{proof} The theorem can be reformulated as follows: $\mathrm{P}$ is orthogonal to $\mathfrak{conf}(4)$, that is vanishes modulo $\mathfrak{conf}(4)$. The following computations will therefore be conducted modulo $\mathfrak{conf}(4)$ and equality in the quotient space will be denoted by $\equiv$. Note that, since the orthogonal projection on $\mathfrak{conf}(4)$ is continuous, $\equiv$ is preserved by taking limits. 
	
	\noindent We recall here the result of \cref{simplifypohozaevtensor}: \begin{align}
   0 \equiv &\int_{\Sph^3} \left(\iota_{\partial_r}S_{F_{A_k}}(\sqrt{\lambda_k}\cdot) \otimes r\diff r\right) \mathrm{vol}_{\Sph^3}\notag\\ 
    &- \int_{\mathrm{B}_{1/\delta}} \left(\frac{1}{2} \langle  S_{F_{\widehat{A}_k}} , \nabla\gamma\rangle \otimes r \diff r -S_{F_{\widehat{A}_k}} \circ \gamma + \frac{1}{4}\langle S_{F_{\widehat{A}_k}}, \gamma \rangle \xi \right)\mathrm{vol} + \varpi_{k,\delta} \label{obstruction1}
\end{align}  where $\widehat{A}_k = \lambda_k^{-1}\widehat{A}_{\infty}(\lambda_k^{-1}\cdot)$, $\gamma$ is the degree 2-term of the Taylor expansion of $h$ in geodesic coordinates at $0$ and \begin{equation}
    \lim_{\delta\to 0}\lim_{k\to +\infty} \varpi_{k,\delta} = 0.
\end{equation}
Let's first deal with the boundary term. Note that if $\Phi$ is a $2$-tensor on $\R^4 = T_0 \R^4$, then \begin{align*}
    \int_{\Sph^3}\left( \iota_{\partial_r} \Phi \otimes \diff r \right)\mathrm{vol}_{\Sph^3} &=  \int_{\Sph^3} \left(x^i \Phi_{ij} \diff x^j   \otimes x^k \diff x^k\right)\mathrm{vol}_{\Sph^3} \\
    &= \Phi_{ij} \left(\int_{\Sph^3} x^i x^k \mathrm{vol}_{\Sph^3}\right)\diff x^j   \otimes\diff x^k\\
    &= \Phi_{ij} \delta^{ik} \frac{|\Sph^3|}{4} \diff x^j   \otimes\diff x^k\\
    &= \frac{\pi^2}{2} \Phi_{ij}  \diff x^j   \otimes\diff x^i 
\end{align*} If $\Phi$ is now a tensor field, then from the previous computation, we deduce \begin{equation}
     \lim_{k\to +\infty}  \int_{\Sph^3} \left(\iota_{\partial_r} \Phi(\sqrt{\lambda_k}\cdot) \otimes \diff r\right) \mathrm{vol}_{\Sph^3} = \frac{\pi^2}{2} \Phi_{ij}(0) \diff x^j   \otimes\diff x^i \label{bdytermcst}
\end{equation} 
Using the fact that the stress-energy tensor is quadratic in the curvature, we can write \begin{equation}
    S_{F_{A_k}} = S_{F_\infty} + S_{F_{\widehat{A}_k}} + \frac{1}{2} \langle F^{\infty}, F_{\widehat{A}_k} \rangle \xi -  F^{\infty}\circ F_{\widehat{A}_k} -   F_{\widehat{A}_k}\circ F^{\infty} +   \mathcal{R}_k
\end{equation} where according to \cref{asymptoticneckexpansion}: \begin{equation}\label{reste}
    \sup_{\partial \mathrm{B}_{\sqrt{\lambda_k}}} |\mathcal{R}_k|\to 0.
\end{equation} Denote by $\sigma_k: x \mapsto \lambda_k x/|x|^2$. Then $\widehat{A}_k= \sigma_k^* \psi^* \widehat{A}_{\infty}$ and \begin{equation}
     S_{F_{A_k}} = S_{F_\infty} + S_{\sigma_k^*\tilde{F}^{\infty}} + \frac{1}{2} \langle F^{\infty}, \sigma_k^*\tilde{F}^{\infty} \rangle \xi -  F^{\infty}\circ \sigma_k^*\tilde{F}^{\infty} -   \sigma_k^*\tilde{F}^{\infty}\circ F^{\infty} +   \mathcal{R}_k \label{expansionS}
\end{equation} Since $\sigma_k $ is nothing but the inversion with respect to the sphere $\partial\mathrm{B}_{\lambda_k}$, we have for $x\in \partial\mathrm{B}_{\lambda_k}$, $\diff_x \sigma_k = I - 2 \diff r\otimes \partial_r$ and thus \begin{equation}
    \sigma_k^*\tilde{F}^{\infty} = \tilde{F}^{\infty} - 2 \diff r\wedge \iota_{\partial_r} \tilde{F}^{\infty}. \label{sigma1}
\end{equation} On the one hand \begin{align}
    \left(F^{\infty}\circ(\diff r\wedge \iota_{\partial_r}\tilde{F}^{\infty})\right) (\partial_r, X) &= \langle \iota_{\partial_r} F^{\infty}, \iota_X \left(\diff r\wedge \iota_{\partial_r}\tilde{F}^{\infty}\right)\rangle \notag\\
    &=\langle \iota_{\partial_r} F^{\infty}, \iota_{\partial_r} \tilde{F}^{\infty}\rangle\diff r(X) - \langle \iota_{\partial_r} F^{\infty}, \diff r\wedge \iota_X\iota_{\partial_r}\tilde{F}^{\infty}\rangle \notag\\
    &=\langle \iota_{\partial_r} F^{\infty}, \iota_{\partial_r} \tilde{F}^{\infty}\rangle\iota_{\partial_r}\xi(X)- \langle \iota_{\partial_r}\iota_{\partial_r} F^{\infty},  \iota_X\iota_{\partial_r}\tilde{F}^{\infty}\rangle\notag\\
    &=\langle \iota_{\partial_r} F^{\infty}, \iota_{\partial_r} \tilde{F}^{\infty}\rangle\iota_{\partial_r}\xi(X) \label{sigma2}
\end{align} and on the other hand \begin{align}
    \left((\diff r\wedge \iota_{\partial_r}\tilde{F}^{\infty}\circ F^{\infty}\right) (\partial_r, X) &= \langle \iota_{X} F^{\infty}, \iota_{\partial_r} \left(\diff r\wedge \iota_{\partial_r}\tilde{F}^{\infty}\right)\rangle \notag\\
    &=\langle \iota_{X} F^{\infty}, \iota_{\partial_r}\tilde{F}^{\infty}\rangle  - \langle \iota_{X} F^{\infty}, \diff r\wedge \iota_{\partial_r}\iota_{\partial_r}\tilde{F}^{\infty}\rangle \notag\\
    &=\left(\tilde{F}^{\infty}\circ F^{\infty} \right) (\partial_r, X) \label{sigma3}
\end{align}

Therefore, plugging \cref{sigma1,sigma2,sigma3} in \cref{expansionS} we get \begin{align*}
    \iota_{\partial_r}S_{F_{A_k}} &=  \iota_{\partial_r} \left( S_{F_\infty} + S_{\sigma_k^*\tilde{F}^{\infty}} + \mathcal{R}_k + \frac{1}{2} \langle F^{\infty}, \tilde{F}^{\infty} \rangle \xi -  F^{\infty}\circ \tilde{F}^{\infty} -   \tilde{F}^{\infty}\circ F^{\infty} \right) \\
     &\quad +\iota_{\partial_r}\left(- \langle  F^{\infty},\diff r\wedge \iota_{\partial_r}\tilde{F}^{\infty}\rangle\xi +2 F^{\infty}\circ(\diff r\wedge \iota_{\partial_r}\tilde{F}^{\infty}) + 2  (\diff r\wedge \iota_{\partial_r}\tilde{F}^{\infty})\circ F^{\infty} \right) \\
    &=  \iota_{\partial_r} \left( S_{F_\infty} + S_{\sigma_k^*\tilde{F}^{\infty}} + \mathcal{R}_k + \frac{1}{2} \langle F^{\infty}, \tilde{F}^{\infty} \rangle \xi -  F^{\infty}\circ \tilde{F}^{\infty} -   \tilde{F}^{\infty}\circ F^{\infty} \right) \\
     &\quad  +\iota_{\partial_r}\left(- \langle  F^{\infty},\diff r\wedge \iota_{\partial_r}\tilde{F}^{\infty}\rangle\xi +2\langle\iota_{\partial_r} F^{\infty}, \iota_{\partial_r} \tilde{F}^{\infty}\rangle \xi +2 \tilde{F}^{\infty}\circ F^{\infty} \right)\\
    &=  \iota_{\partial_r} \left( S_{F_\infty} + S_{\sigma_k^*\tilde{F}^{\infty}} + \mathcal{R}_k + \frac{1}{2} \langle F^{\infty}, \tilde{F}^{\infty} \rangle \xi -  F^{\infty}\circ \tilde{F}^{\infty} +   \tilde{F}^{\infty}\circ F^{\infty} \right)
\end{align*}  where in the last step, we have used that, with our normalisation for the inner product of tensors, $\langle  F^{\infty},\diff r\wedge \iota_{\partial_r}\tilde{F}^{\infty}\rangle = 2\langle  \iota_{\partial_r} F^{\infty}, \iota_{\partial_r}\tilde{F}^{\infty}\rangle.$ Finally, we get the expression \begin{align}
    \iota_{\partial_r}S_{F_{A_k}} =  \iota_{\partial_r} \left( S_{F_\infty+  \tilde{F}^{\infty}} - S_{\tilde{F}^{\infty}} + S_{\sigma_k^*\tilde{F}^{\infty}} + \mathcal{R}_k +2 \tilde{F}^{\infty}\circ F^{\infty} \right) \label{aux1}
\end{align}
From \cref{bdytermcst}, \begin{align}
    \lim_{k\to +\infty} \frac{2}{\pi^2}\int_{\Sph^3}\left( \iota_{\partial_r} \left( S_{F_\infty+  \tilde{F}^{\infty}} - S_{\tilde{F}^{\infty}} + 2 \tilde{F}^{\infty}\circ F^{\infty} \right)\right.&\left.(\sqrt{\lambda_k}\cdot) \otimes \diff r \right)\mathrm{vol}_{\Sph^3} \notag\\ &= \left(S_{F_\infty+  \tilde{F}^{\infty}} - S_{\tilde{F}^{\infty}} + 2 F^{\infty} \circ \tilde{F}^{\infty}\right)(0) \notag\\
    &\equiv 2 F^{\infty} \circ \tilde{F}^{\infty}(0) \label{aux2}
\end{align} where we have used that, by definition, stress-energy tensors are traceless symmetric. Moreover, using \cref{reste} \begin{equation}
     \lim_{k\to +\infty} \int_{\Sph^3} \left(\iota_{\partial_r} \mathcal{R}_k (\sqrt{\lambda_k}\cdot) \otimes \diff r\right) \mathrm{vol}_{\Sph^3} = 0 \label{aux3}
\end{equation} and since $\sigma_k$ is a conformal transformation, $S_{\sigma_k^*\tilde{F}^{\infty}}$ is the stress-energy tensor of a Yang-Mills connections hence satisfies \cref{deftensorpohozaev} with $h=\xi$, that is  \begin{equation}
    \int_{\Sph^3} \left(\iota_{\partial_r} S_{\sigma_k^*\tilde{F}^{\infty}} (\sqrt{\lambda_k}\cdot) \otimes \diff r \right) \mathrm{vol}_{\Sph^3} \equiv 0 \label{aux4}
\end{equation} Combining \cref{obstruction1} with \cref{aux1,aux2,aux3,aux4}, we get \begin{equation}
   0 \equiv  \pi^2 F^{\infty} \circ \tilde{F}^{\infty}(0) - \int_{\mathrm{B}_{1/\delta}} \left(\frac{1}{2} \langle  S_{F_{\widehat{A}_k}} , \nabla\gamma\rangle \otimes r \diff r -S_{F_{\widehat{A}_k}} \circ \gamma + \frac{1}{4}\langle S_{F_{\widehat{A}_k}}, \gamma \rangle \xi \right)\mathrm{vol} + \widetilde{\varpi}_{k,\delta}\label{aux5}
\end{equation} where \begin{equation*}
    \lim_{\delta\to 0}\lim_{k\to +\infty} \widetilde{\varpi}_{k,\delta} = 0.
\end{equation*} The theorem follows from the smooth convergence of $\widehat{A}_k$ to $\widehat{A}_{\infty}$.
	\end{proof}	
	
	To complete the proof of \cref{mainthm}, the only thing left is the following result.
	\begin{proposition}\label{mainthm3} In the conclusion of \cref{mainthm2}, $\gamma$ can be replaced by $ x\mapsto -\frac{1}{3}\mathrm{W}(\cdot,x,\cdot,x)$ where $\mathrm{W}$ is the Weyl tensor at $0$. This can be reformulated as follows: for every pair of indices $(i,j)$
		    \begin{equation} \label{mainthmeq}
		         \langle F^{\infty}_{\alpha[i} , \tilde{F}^{\infty}_{j]\alpha} \rangle (0) + \delta_{ij} \langle F^{\infty}, \tilde{F}^{\infty}\rangle (0) +  \int_{\R^4} \langle  S_{\widehat{F}^{\infty}} \otimes \mathrm{W},    \mathrm{T}^{i j} \rangle \mathrm{vol} = 0
		    \end{equation} where \begin{equation*}
		        \mathrm{T}^{i j}_{ab\alpha\mu\beta\nu} = \frac{1}{3\pi^2}\left(\delta_{\alpha a}  \delta_{\beta b} x^\nu x^{[j}\delta_{i]\mu} -\delta_{\alpha a} \delta_{b[i}\delta_{j]\beta} x^\mu x^\nu   +\delta_{\alpha a}  \delta_{\beta b}\delta_{ij}  x^\mu x^\nu  \right)
		    \end{equation*}
		\end{proposition}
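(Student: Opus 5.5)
Starting from \cref{mainthm2}, the plan has two parts: first show that $\gamma$ may be replaced by its Weyl part, then unfold the resulting identity in coordinates.

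\textbf{Reduction to the Weyl part.} In geodesic (normal) coordinates the quadratic Taylor term of the metric is
\[
\gamma_{ab}(x) = -\tfrac13 \mathrm{Rm}_{a\mu b\nu}(0)\,x^\mu x^\nu .
\]
Decompose $\mathrm{Rm} = \mathrm{W} + \mathrm{A}\owedge \xi$, where $\mathrm{A}$ is the Schouten tensor at $0$. Accordingly $\gamma = \gamma_{\mathrm{W}} + \gamma_{\mathrm{A}}$, with
\[
(\gamma_{\mathrm{A}})_{ab} = -\tfrac13\left( \mathrm{A}_{ab}|x|^2 + \mathrm{A}(x,x)\,\xi_{ab} - \mathrm{A}_{a\nu}x^\nu x_b - \mathrm{A}_{b\mu}x^\mu x_a \right).
\]
The claim to prove is that the integral term of \eqref{obstructionbubbling}, viewed as a linear functional of $\gamma$, sends $\gamma_{\mathrm{A}}$ into $\mathfrak{conf}(4)$, i.e.\ contributes $\equiv 0$.

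There are two ways to see this; I would try the second first.

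\emph{Algebraic route.} Use that $S := S_{\widehat F^\infty}$ is traceless symmetric and divergence-free on $\R^4$ (\cref{YMdiv0}). Integrate by parts to move the derivative in $\nabla\gamma$ onto $S$, and use decay of $S$ at infinity: $|S| \le C r^{-8}$, since $\widehat A_\infty$ extends to $\Sph^4$. Then check that each of the four pieces of $\gamma_{\mathrm{A}}$ yields a multiple of $\xi$ or a skew tensor. The term $\mathrm{A}(x,x)\xi$ pairs with the traceless $S$ to zero in the $\langle S,\gamma\rangle$ parts.

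\emph{Conceptual route.} The Schouten part of $\gamma$ is, to second order, the effect of a conformal change $h\mapsto e^{2u}h$ with $u$ quadratic, together with a coordinate change by a quadratic vector field. More precisely, $\gamma_{\mathrm{A}} = \mathcal L_Y\xi + 2u\,\xi$ with $Y$ quadratic. Indeed $\mathcal L_Y\xi - \tfrac12(\mathrm{div} Y)\xi$ for $Y_a = \mathrm{A}_{a\nu}x^\nu|x|^2$-type fields reproduces the mixed terms; this needs to be verified.

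For such perturbations the bulk integrand
\[
\tfrac12\langle S,\nabla\gamma\rangle\otimes r\,\diff r - S\circ\gamma + \tfrac14\langle S,\gamma\rangle\xi
\]
is exactly the first-order variation of the Pohozaev tensor of \cref{tensorpohozaev} under the metric perturbation. Conformal factors drop out because the Yang--Mills energy is conformally invariant in dimension $4$ and $S$ is traceless. Diffeomorphism terms reduce, via $\mathrm{div}\, S=0$, to boundary terms $\int_{\partial B_R} S(Y,\nu)$ and similar, which vanish as $R\to\infty$ thanks to the $r^{-8}$ decay against $|Y|\lesssim r^2$. The conclusion is that only $\gamma_{\mathrm{W}}(x)_{ab} = -\tfrac13\mathrm{W}_{a\mu b\nu}x^\mu x^\nu$ survives modulo $\mathfrak{conf}(4)$.

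\textbf{Reformulation in coordinates.} Substitute $\gamma_{\mathrm{W}}$ into \eqref{obstructionbubbling}. Writing $\nabla_\beta\gamma_{ab} = -\tfrac13(\mathrm{W}_{a\beta b\nu}+\mathrm{W}_{a\nu b\beta})x^\nu$ and using the symmetries of $\mathrm{W}$, the three bulk terms take the form $\langle S\otimes\mathrm{W},\cdot\rangle$ with explicit polynomial weights. The term $\langle S,\gamma\rangle\xi$ produces the $\delta_{ij}$ piece.

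The statement "$\mathrm{P}$ is traceless symmetric" is then equivalent to the vanishing, for each $(i,j)$, of the skew part $\mathrm{P}_{[ij]}$ plus $\delta_{ij}\operatorname{tr}\mathrm{P}$. This is the combination tested against $\mathfrak{conf}(4)$, up to normalising constants. Computing $(F^\infty\circ\tilde F^\infty)_{[ij]}$ and its trace, which equals $2\langle F^\infty,\tilde F^\infty\rangle$ with the paper's normalisation, gives the first two terms of \eqref{mainthmeq}. Collecting the coefficients of $x^\mu x^\nu$ gives $\mathrm{T}^{ij}$ with prefactor $\frac{1}{3\pi^2}$.

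\textbf{Main obstacle.} I expect the main difficulty to be the Schouten cancellation: verifying that the decomposition $\gamma_{\mathrm{A}} = \mathcal L_Y\xi + 2u\xi$ holds exactly, and that all the resulting terms are pure trace or skew. If the conceptual route becomes messy, I will fall back on direct componentwise integration using $\partial^a S_{ab}=0$ and $S_{aa}=0$. The index bookkeeping for $\mathrm{T}$ is routine but error-prone.
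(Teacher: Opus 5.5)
Your strategy (isolate the Schouten part $\gamma_{\mathrm{A}}$, write it as a conformal factor plus a Lie derivative of $\xi$, then remove it using $\operatorname{tr}S=0$, $\partial_\alpha S_{\alpha\beta}=0$ and $|S|\lesssim r^{-8}$) is the paper's. However, the target you set for this cancellation is backwards, and the key check as you state it would fail. Put $L(\sigma)=\int_{\R^4}\bigl(\tfrac12\langle S,\nabla\sigma\rangle\otimes r\,\diff r-S\circ\sigma+\tfrac14\langle S,\sigma\rangle\xi\bigr)\mathrm{vol}$. The conclusion is that $\mathrm{P}$ is traceless symmetric, so you need $L(\gamma_{\mathrm{A}})$ to be \emph{orthogonal} to $\mathfrak{conf}(4)$, i.e.\ itself traceless symmetric. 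Then the tensors built from $\gamma$ and from $\gamma_{\mathrm{W}}$ differ by a traceless symmetric tensor. (In the paper, $\equiv$ means equality of orthogonal projections onto $\mathfrak{conf}(4)$, despite the phrase ``modulo $\mathfrak{conf}(4)$''.) A nonzero value in $\mathfrak{conf}(4)$, which is what you propose to verify (``a multiple of $\xi$ or a skew tensor''), would change the conclusion. It is also not what happens: the piece $-\tfrac13\mathrm{A}(x,x)\xi$ alone gives $L=\tfrac13\int\mathrm{A}(x,x)\,S\,\mathrm{vol}$, a traceless symmetric tensor that nothing forces to vanish.

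Likewise, the diffeomorphism part does not reduce to vanishing boundary terms. For $\sigma=(R\owedge\xi)(\cdot,x,\cdot,x)$ one has $\sigma=f\xi+\nabla^{\mathrm{s}}\omega$ with $f=3R(x,x)$ and $\omega=|x|^2R(x,\cdot)-2R(x,x)\,\xi(x,\cdot)$. So your $Y$ is cubic, not quadratic; the $r^{-8}$ decay still kills all boundary terms. Then $L(f\xi)=-\int fS\,\mathrm{vol}$. Using $x^j\partial_i(\nabla^{\mathrm{s}}\omega)_{\alpha\beta}=(\nabla^{\mathrm{s}}(x^j\partial_i\omega))_{\alpha\beta}-\tfrac12(\delta_{\alpha j}\partial_i\omega_\beta+\delta_{\beta j}\partial_i\omega_\alpha)$ together with $\operatorname{div}S=0$, one finds $L(\nabla^{\mathrm{s}}\omega)_{ij}=-\tfrac12\int(S_{j\beta}\partial_i\omega_\beta+S_{i\beta}\partial_j\omega_\beta)\,\mathrm{vol}$. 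This is symmetric, with trace $-\int S_{i\beta}\partial_i\omega_\beta\,\mathrm{vol}=0$. Both contributions are generally nonzero. Only their pairings with $T\in\mathfrak{conf}(4)$ vanish: pointwise for $f\xi$, and up to boundary terms for $\nabla^{\mathrm{s}}\omega$. The latter holds because $\mathcal{L}_{X_T}\xi=c_T\xi$ with $c_T$ constant, so $\mathcal{L}_{X_T}\mathcal{L}_Y\xi=c_T\mathcal{L}_Y\xi+\mathcal{L}_{[X_T,Y]}\xi$. Read as a statement about $\mathfrak{conf}(4)$-components, your conceptual route is correct, and it is then exactly the paper's proof.

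One small normalisation point for the reformulation. $S$ is traceless only with $|F|^2=\sum_{i,j}|F_{ij}|^2$. With that convention $\operatorname{tr}(F^\infty\circ\tilde F^\infty)=\langle F^\infty,\tilde F^\infty\rangle$, which is the coefficient of $\delta_{ij}$ in \eqref{mainthmeq}, not $2\langle F^\infty,\tilde F^\infty\rangle$.
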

	
	\begin{proof} In geodesic coordinates, one has the following expression for $\gamma$ \cite[proposition II.3.1]{sakai1996riemannian}
	: \begin{equation}
	    \gamma(x) = -\frac{1}{3} \mathrm{Rm}(\cdot,x,\cdot,x) 
	\end{equation} $\mathrm{Rm}$ is the Riemann tensor at $0$. Moreover, using the Ricci decomposition of the Riemann tensor, it is enough to prove that, defining $S=S_{\widehat{F}_\infty}$, the 2-tensor \begin{equation*} \Phi \coloneq
 \int_{\R^4} \left(\frac{1}{2} \langle  S , \nabla\sigma\rangle \otimes r \diff r -S \circ \sigma + \frac{1}{4}\langle S, \sigma \rangle \xi \right)\mathrm{vol} 
	\end{equation*} is traceless symmetric, for every $\sigma$ of the form $\sigma(x) = (R\owedge\xi)(\cdot,x,\cdot, x)$ where $R$ is an arbitrary symmetric $2$-tensor and $\owedge$ is the Kulkarni-Nomizu product. First,  \begin{equation*}
	    \sigma =f\xi+ \nabla^{\mathrm{s}} \omega
	\end{equation*} where: \begin{align*}
	    f(x) &= 3 R(x,x), \\
	    \omega(x) &= |x|^2 R(x,\cdot) -2R(x,x) \xi(x,\cdot)
	\end{align*} and $\nabla^{\mathrm{s}}$ denote the symmetric gradient: $(\nabla^{\mathrm{s}}\omega)_{\alpha\beta} = \left(\partial_\alpha \omega_\beta + \partial_\beta \omega_\alpha \right)/2$. Since $S$ is traceless, we have in particular \begin{equation} \label{curvtermtrace}
	    \mathrm{Tr} \,\Phi =  \int_{\R^4} \frac{1}{2} \mathrm{Tr}\left(\langle  S , \nabla\sigma\rangle \otimes  r \diff r\right) \mathrm{vol}  = \int_{\R^4} \frac{1}{2} \langle  S , r\partial_r\sigma\rangle   \mathrm{vol} = \int_{\R^4}  \langle  S , \sigma\rangle   \mathrm{vol} = 0
	\end{equation} where we have used that the coefficients of $\sigma$ are homogeneous of degree $2$. Additionally, \begin{align*}
	      \nabla \sigma \otimes r\diff r &= \left(\nabla f\otimes r\diff r\right)\xi +  \nabla \left(\nabla^{\mathrm{s}}\omega\right)\otimes r\diff r\\
	         S\circ \sigma &= f S\circ \xi + S\circ \nabla^{\mathrm{s}} \omega =    f S + S\circ \nabla^{\mathrm{s}} \omega
	\end{align*} and since $S$ is traceless we deduce \begin{equation*}
	   \Phi = \int_{\R^4} \left(\frac12\langle S,V \rangle- f S \right)  \mathrm{vol}
	\end{equation*} where $\langle S,V \rangle$ is understood as \begin{equation*}
	    \langle S,V \rangle = S_{\alpha\beta} V_{ij}^{\alpha\beta} \diff x^i\otimes \diff x^j
	\end{equation*} with \begin{equation*}
	  V_{ij}^{\alpha\beta} \coloneq x^j\partial_i \left(\nabla^{\mathrm{s}}\omega\right)_{\alpha\beta} - \delta_{i\beta}\nabla^{\mathrm{s}}_{\alpha j}\omega - \delta_{i\alpha}\nabla^{\mathrm{s}}_{\beta j}\omega.
	\end{equation*} On the one hand \begin{equation*}
	    x^j\partial_i \left(\nabla^{\mathrm{s}}\omega\right)_{\alpha\beta} = \left(\nabla^{\mathrm{s}}( x^j\partial_i\omega)\right)_{\alpha\beta} -\frac12 \delta_{\alpha j}\partial_i\omega_\beta - \frac12 \delta_{\beta j}\partial_i\omega_\alpha
	\end{equation*} and on the other hand \begin{align*}
	  2 \delta_{i\beta}\nabla^{\mathrm{s}}_{\alpha j}\omega +2 \delta_{i\alpha}\nabla^{\mathrm{s}}_{\beta j}\omega
	    &= \delta_{i\beta} \partial_\alpha \omega_j +\delta_{i\beta}\partial_j \omega_\alpha + \delta_{i\alpha} \partial_\beta \omega_j +\delta_{i\alpha}\partial_j \omega_\beta \\
	    &=\partial_\alpha( \delta_{i\beta}  \omega_j) +\partial_\beta(\delta_{i\alpha}  \omega_j) +  \delta_{i\beta}\partial_j \omega_\alpha + \delta_{i\alpha}\partial_j \omega_\beta\\
	    &= 2\nabla^{\mathrm{s}}_{\alpha\beta}(\omega_j\diff x^i) + \delta_{i\beta}\partial_j \omega_\alpha + \delta_{i\alpha}\partial_j \omega_\beta
	\end{align*} so we deduce \begin{equation*}
	      V_{ij}^{\alpha\beta} = \left(\nabla^{\mathrm{s}}( x^j\partial_i\omega + \omega_j\diff x^i)\right)_{\alpha\beta} - \frac12\left( \delta_{\alpha j}\partial_i\omega_\beta + \delta_{\beta j}\partial_i\omega_\alpha + \delta_{i\beta}\partial_j \omega_\alpha + \delta_{i\alpha}\partial_j\omega_\beta \right)
	\end{equation*} that is \begin{equation*}
	    \langle S, V\rangle =    \langle S, \nabla^{\mathrm{s}}\left(\nabla \omega\otimes r\diff r \right)\rangle  - \left( S_{j\beta}\partial_i \omega_\beta + S_{\alpha i}\partial_j \omega_\alpha\right) \diff x^i\otimes\diff x^j
	\end{equation*} and finally, since $S$ is divergence-free:
	\begin{equation*} \Phi =- \int_{\R^4} \left( f S + \left( S_{j\beta}\partial_i \omega_\beta + S_{\alpha i}\partial_j \omega_\alpha\right) \diff x^i\otimes\diff x^j\right)  \mathrm{vol}
	\end{equation*} which is indeed a symmetric tensor. Combining this with \cref{curvtermtrace}, $\Phi$ is traceless symmetric. Choosing $R$ such that $\mathrm{Rm}= \mathrm{W} + R\owedge\xi$, we deduce that \begin{equation*}
	    \mathrm{P} = 
		    \left( F^{\infty} \circ\tilde{F}^{\infty} \right)(0) +\frac{1}{3\pi^2} \int_{\R^4} \left(\frac{1}{2} \langle  S , \nabla w \rangle \otimes r \diff r -S \circ w + \frac{1}{4}\langle S, w \rangle \xi \right)\mathrm{vol}
	\end{equation*}  is traceless symmetric, where $w=\mathrm{W}(\cdot, x, \cdot, x)$. Equation \cref{mainthmeq} will follow from the computation of the trace and skew-symmetric part of $\mathrm{P}$. \begin{align*}
	   P_{[ij]} + \mathrm{Tr(P)}\delta_{ij} &=  \left( F^{\infty} \circ\tilde{F}^{\infty} \right)_{[ij]}(0) + \delta_{ij} \langle F^{\infty}, \tilde{F}^{\infty} \rangle(0) \\
	  & \quad +  \frac{1}{3\pi^2} \int_{\R^4} \left(\frac{1}{2} \langle  S , x^{[j} \partial_{i]}w -(S\circ w)_{[ij]} + \delta_{ij}\langle S, w \rangle \right)\mathrm{vol}\\
	   &= \delta_{ij} \left(\langle F^{\infty}, \tilde{F}^{\infty} \rangle(0) +  \frac{1}{3\pi^2} \mathrm{W}_{\alpha\mu\beta\nu}\int_{\R^4} S_{\alpha\beta} x^\mu x^\nu  \mathrm{vol}\right) \\
	   &\quad +\left( F^{\infty} \circ\tilde{F}^{\infty} \right)_{[ij]}(0)\\
	   &\quad +\frac{1}{3\pi^2} \int_{\R^4} \left(\frac{1}{2} \mathrm{W}_{\alpha\mu\beta\nu}S_{\alpha\beta} x^{[j} \partial_{i]}(x^\mu x^\nu) -x^\mu x^\nu S_{\alpha [i}\mathrm{W}_{j] \nu\alpha\mu } \right)\mathrm{vol} \\
	   &= \delta_{ij} \left(\langle F^{\infty}, \tilde{F}^{\infty} \rangle(0) +  \frac{1}{3\pi^2} \mathrm{W}_{\alpha\mu\beta\nu}\int_{\R^4} S_{\alpha\beta} x^\mu x^\nu  \mathrm{vol}\right) \\
	   &\quad +\left( F^{\infty} \circ\tilde{F}^{\infty} \right)_{[ij]}(0)\\
	   &\quad +\frac{1}{3\pi^2} \int_{\R^4} \left( \mathrm{W}_{\alpha\mu\beta\nu}S_{\alpha\beta}x^\nu x^{[j} \delta_{i]\mu} -x^\mu x^\nu S_{\alpha [i}\mathrm{W}_{j] \nu\alpha\mu } \right)\mathrm{vol}\\
	   &=  \delta_{ij}\langle F^{\infty}, \tilde{F}^{\infty} \rangle(0)+ \left( F^{\infty} \circ\tilde{F}^{\infty} \right)_{[ij]}(0) +\int_{\R^4} S_{a b} \mathrm{W}_{\alpha\mu\beta\nu} \mathrm{T}^{ij}_{ab\alpha\mu\beta\nu} \mathrm{vol}
	\end{align*} which concludes the proof.
	\end{proof} 
	\begin{remark}
	    Since the problem under study is invariant under conformal changes of the metric, we could have used conformal normal coordinates (see \cite{lee1987yamabe}) to get rid of the last proposition. It is interesting to note that our approach is independent on the choice of normal coordinates and still yields a conformally invariant result.
	\end{remark}

	\subsection{Infinitesimal gauge symmetry}
	\begin{theorem}
	\label{mainthm5}
	      Consider a sequence $(A_k)_k$ of Yang-Mills connections with weak limit $A_\infty$. Assume that $0\in M$ is a concentration point and that the rescaled sequence about $0$, denoted by $(\widehat{A}_k)_k$, weakly converges to  $\widehat{A}_\infty$. Denote by $F^\infty$ and $\tilde{F}^\infty$ the curvature tensors of $A_\infty$ and $\tilde{A}_\infty = \psi^*\widehat{A}_\infty$ respectively, where $\psi:x \mapsto x/|x|^2$ is the inversion. Then for every $\xi \in \mathfrak{g}$,
		    \begin{equation*}
		         \langle F^{\infty} ,  [\tilde{F}^{\infty}, \xi] \rangle (0) = 0. 
		         \end{equation*}
	\end{theorem}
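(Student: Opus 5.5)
Since this constraint comes from gauge symmetry, I would use the Noether current of infinitesimal gauge transformations in place of the stress-energy tensor. For a constant $\xi\in\mathfrak{g}$, the Yang--Mills equation $\diff_{A}^{\ast_h} F_{A}=0$ means that $\star_h F_{A}$ is $\diff_A$-closed. Pairing it with $\xi$ therefore gives
\[
\diff \langle \star_h F_{A}, \xi\rangle = \langle \star_h F_A, \diff_A \xi\rangle = -\langle \star_h F_A, [A,\xi]\rangle ,
\]
so the current $J_\xi=\langle \iota_{\partial_r}F_A,\xi\rangle$ is conserved up to a bulk term involving $[A,\xi]$. The plan has two steps. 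First, use the conservation law on a sphere $\partial \mathrm{B}_{r}$ in the neck to show that the boundary term is negligible. Second, expand it using \cref{asymptoticneckexpansion} and show that what survives at leading order is exactly $\langle F^\infty,[\tilde F^\infty,\xi]\rangle(0)$.

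Concretely, let $\rho_k(x)=\lambda_k x/|x|^2=\sigma_k(x)$. I would work with the Lie-algebra-valued $3$-form $\star F_{A_k}$ and the smooth model $A_\infty+\lambda_k^{-1}\widehat A_\infty(\lambda_k^{-1}\cdot)$ in the Rivière gauge of \cref{Rivieregauge}. The identity to integrate is
\[
\int_{\partial \mathrm{B}_{\sqrt{\lambda_k}}} \langle \star F_{A_k}, [\eta_k ,\xi]\rangle ,
\]
where $\eta_k$ is a solution of the linearised gauge equation chosen so that the bulk term vanishes. The natural candidates are $\eta_k=\star F$ paired against a commutator, which reproduces a quadratic interaction term. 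A cleaner version of the same idea is to integrate the identity $\diff\langle \star F_{A_k}, F\text{-term}\rangle$ over $\mathrm{B}_{\sqrt{\lambda_k}}$. The bulk contributions are controlled exactly as in \cref{simplifypohozaevtensor}, using $|F_{A_k}|\le C(1+\lambda_k^2 r^{-4})$ and $\diff^*A_k=0$; they contribute $\gdo{\lambda_k+\delta^2}$ after normalisation by $\lambda_k^{-2}$. On the sphere I would substitute the expansion $F_{A_k}=F^\infty+\sigma_k^*\tilde F^\infty+o(1)$ from \cref{asymptoticneckexpansion}, written as in \cref{sigma1}. The two self-interaction terms (limit--limit and bubble--bubble) vanish: each is the corresponding conserved quantity for a smooth Yang--Mills connection on a full ball, so it integrates to zero. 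This is the analogue of \cref{aux4}.

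The cross term $\langle F^\infty,[\sigma_k^*\tilde F^\infty,\xi]\rangle$ is then averaged over $\Sph^3$ at scale $\sqrt{\lambda_k}$. By continuity, as in \cref{bdytermcst}, it tends to a constant multiple of $\langle F^\infty,[\tilde F^\infty,\xi]\rangle(0)$. The reflection $I-2\diff r\otimes\partial_r$ produces components that either cancel, as in \cref{sigma2,sigma3}, or recombine with the unreflected part using the normalisation $\langle F,\diff r\wedge\iota_{\partial_r}G\rangle=2\langle\iota_{\partial_r}F,\iota_{\partial_r}G\rangle$ and the spherical average $\int_{\Sph^3}x^ix^k=\frac{\pi^2}{2}\delta^{ik}$. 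Letting $k\to\infty$ and then $\delta\to 0$ gives the identity.

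The main obstacle is choosing the right conserved current. The naive Noether current $\langle\iota_{\partial_r}F_A,\xi\rangle$ is linear in $F$, so by itself it cannot produce a quadratic interaction term, and $\xi$ is not covariantly constant. I would first try the ad-invariance identity $\langle F^\infty,[\tilde F^\infty,\xi]\rangle=\langle\xi,[\tilde F^\infty\wedge\star F^\infty]\rangle$-type contraction, together with the commutator term $[A_k,\xi]$ in $\diff_{A_k}\xi$. In Rivière's gauge, $A_k\approx A_\infty+\lambda_k^{-1}\widehat A_\infty(\lambda_k^{-1}\cdot)$ and $A_k$ decays like $r+\lambda_k^2r^{-3}$, which is consistent with the estimates in \cref{Rivieregauge}. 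The bulk integral $\int_{\mathrm{B}_{\sqrt{\lambda_k}}}\langle\star F_{A_k},[A_k,\xi]\rangle$ therefore picks up precisely the cross terms between the limit part and the bubble part in the neck. Showing that these cross terms, after rescaling, concentrate to $\langle F^\infty,[\tilde F^\infty,\xi]\rangle(0)$ rather than vanishing is the delicate step.
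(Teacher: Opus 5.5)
The overall shape of your plan is right: a conservation law on $\partial\mathrm{B}_{\sqrt{\lambda_k}}$ followed by the neck expansion. But the proposal never fixes the identity, and it leaves open exactly the step that carries the proof.

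\textbf{The conserved quantity.} Your first display already contains what is needed. Since $\langle\star_h F_A,\xi\rangle$ is a $2$-form, $\diff\langle\star_h F_A,\xi\rangle$ is exact. Integrating the identity over the closed $3$-manifold $\partial\mathrm{B}_{\sqrt{\lambda_k}}$ therefore gives
\[
\int_{\partial\mathrm{B}_{\sqrt{\lambda_k}}}\langle\iota_\nu F_{A_k},[A_k,\xi]\rangle_h\,\mathrm{vol}_h=0 .
\]
This is the Noether identity $\int_{\partial B}\langle\iota_\nu F_A,\diff_A\xi\rangle=0$ that the paper uses. It holds exactly: there is no bulk term, no auxiliary $\eta_k$, and nothing to estimate as in \cref{simplifypohozaevtensor}. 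Your ``bulk integral'' $\int_{\mathrm{B}_{\sqrt{\lambda_k}}}\langle\star F_{A_k},[A_k,\xi]\rangle$ is a $3$-form integrated over a $4$-ball, so it is not meaningful as written. More importantly, the conserved flux is curvature times connection, not curvature times curvature. Substituting only the curvature expansion of \cref{asymptoticneckexpansion} and averaging $\langle F^\infty,[\sigma_k^*\tilde F^\infty,\xi]\rangle$ over the sphere does not correspond to any conserved quantity.

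\textbf{The missing mechanism.} Expand $A_k=A_\infty+\sigma_k^*\tilde A_\infty+e_k$.
\begin{itemize}
\item The two self-terms vanish by the same identity applied to $A_\infty$ and to $\tilde A_\infty$.
\item In the gauge of \cref{Rivieregauge} one has $A_\infty(0)=\tilde A_\infty(0)=0$. So on the sphere $A=x^i\partial_iA(0)+\gdo{r^2}$, and after normalising by $\lambda_k^{-2}$ the average $\int_{\Sph^3}y^\ell y^i=\frac{|\Sph^3|}{4}\delta^{\ell i}$ against the antisymmetric $F_{\ell j}(0)$ antisymmetrises $\partial_iA_j(0)$ into $F_{ij}(0)$. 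This is how a curvature--curvature pairing at $0$ emerges from a curvature--connection flux.
\item On $\partial\mathrm{B}_{\sqrt{\lambda_k}}$ we have $\iota_\nu\sigma_k^*\tilde F^\infty=-\iota_\nu\tilde F^\infty$, while $\sigma_k^*\tilde A_\infty$ and $\tilde A_\infty$ have the same tangential part. Combined with ad-invariance, this makes the two cross terms add rather than cancel, giving a nonzero multiple of $\langle F^\infty,[\tilde F^\infty,\xi]\rangle(0)$. This is precisely the ``delicate step'' you left open.
\end{itemize}

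\textbf{The error term.} You must also control $e_k$. On that sphere $e_k$ is only $\gdo{\sqrt{\lambda_k}}$, the same order as the main terms, so the curvature estimate alone is not enough. You need the decomposition of \cref{key}:
\[
e_k=\check e_k+\beta^k_j\diff x^j+\tfrac12\nu^k_{ij}(x^i\diff x^j+x^j\diff x^i),
\]
with $\check e_k=o(\sqrt{\lambda_k})$. The $\phi^{ij}$ components are small by the proof of \cref{asymptoticneckexpansion}. The harmonic pieces enter only through $\int_{\Sph^3}y^\ell\Phi_{\ell j}(\sqrt{\lambda_k}y)$ and through the symmetrised $\int_{\Sph^3}y^\ell y^i\Phi_{\ell j}(\sqrt{\lambda_k}y)$, where $\Phi=F^\infty-\tilde F^\infty$. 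Both tend to $0$ because $\Phi$ is continuous and antisymmetric.
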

	
\begin{proof} The proof follows the same line of reasoning as the one of \cref{mainthm2}, using this time the conservation law associated to gauge invariance that is for every $\xi \in \mathfrak{g}$, 
    \begin{equation}
      \frac{1}{\lambda_k ^2}  \int_{\partial\mathrm{B}_{\sqrt{\lambda_k}}} \langle \iota_\nu F_{A_k}, [A_k , \xi] \rangle_h {\mathrm{vol}_{h}}_{| \partial B} = 0.
    \end{equation}
    Using the same arguments, we deduce \begin{equation}
      \frac{1}{\lambda_k ^2}  \int_{\partial\mathrm{B}_{\sqrt{\lambda_k}}} \left(\langle \iota_\nu F^\infty, [\tilde{A}_\infty + e_k, \xi] \rangle - \langle \iota_\nu \tilde{F}^\infty, [A_\infty + e_k, \xi] \rangle \right)\mathrm{vol}  = o(1).
    \end{equation} Moreover, with our choice of gauge, $A_\infty(0)= \tilde{A}_\infty(0)=0$ and therefore \begin{align*}
        A_\infty &= x^i \partial_i  A_\infty(0) + O(r^2) \\
        \tilde{A}_\infty &=x^i \partial_i  \tilde{A}_\infty(0) + O(r^2) 
    \end{align*} which gives \begin{equation}
        \frac{1}{\lambda_k ^2}  \int_{\partial\mathrm{B}_{\sqrt{\lambda_k}}} \left(\langle \iota_\nu F^\infty, [\tilde{A}_\infty, \xi] \rangle - \langle \iota_\nu \tilde{F}^\infty, [A_\infty, \xi] \rangle \right)\mathrm{vol} \to \frac{|\Sph^3|}{4}  \langle F^{\infty} ,  [\tilde{F}^{\infty}, \xi] \rangle (0).
    \end{equation}  Recall that \cref{key} applies to $e_k$: we can write \begin{equation*}
        e_k = \check{e}_k + \sum_{1 \leq j \leq 4}  \beta_j^k \diff x^j +
	 \frac12 \sum_{1 \leq i\leq j \leq 4} \nu_{i j}^k \left(x^i \diff x^j +  x^j \diff x^i\right)
    \end{equation*} where $\sup_{\partial \mathrm{B}_{\sqrt{\lambda_k}}} |\check{e}_k| = o(\sqrt{\lambda}_k)$ and $|\beta_j^k|=O(\sqrt{\lambda_k})$, $|\nu^k_{ij}|=O(\lambda_k)$. Introducing $\Phi =F^\infty - \tilde{F}^\infty$, this implies \begin{align*}
        \frac{1}{\lambda_k ^2}  \int_{\partial\mathrm{B}_{\sqrt{\lambda_k}}} \langle \iota_\nu  \Phi , [e_k, \xi] \rangle \mathrm{vol} &=  \frac12\sum_{1 \leq i\leq j \leq 4} \left\langle [\xi , \nu_{i j}^k], \int_{\Sph^3} \left( x^\ell x^i \Phi_{\ell j}(\sqrt{\lambda_k}\cdot) +x^\ell x^j \Phi_{\ell i}(\sqrt{\lambda_k}\cdot) \right) \mathrm{vol} \right\rangle\\
         &\quad +\sum_{1 \leq j \leq 4}  \left\langle [\xi, \beta_j^k], \int_{\Sph^3} x^\ell \Phi_{\ell j}(\sqrt{\lambda_k}\cdot) \mathrm{vol} \right\rangle +o(1).
    \end{align*} All integral terms vanish in the limit, which is still true when they are multiplied by bounded quantities. As a result, the left hand side converges to zero and the proof is complete.
\end{proof}

		\subsection{The general bubble-tree case} 		
			To conclude this section, we extend the previous results of this section to the general bubble-tree setting, that we recall briefly (we refer to the paper of Laurain-Rivière \cite{laurainriviere2014angular} were the construction is performed in  conformally invariant problems in dimension 2, for Yang-Mills, see \cite[Theorem VII.3]{rivière2015variations}, \cite{gauvritlaurain2025morse}). The main idea is the following, as a consequence of $\epsilon$-regularity and energy gap, if the energy concentrates then one can progressively extract bubbles and the process must stop since each step takes a quantified amount of energy.
			
			 \begin{theorem}
	\label{bubbletreethm}
		Let $ (M^4,h)$ be a closed four-dimensional Riemannian manifold and  $(A_k)_k$ a sequence of Yang-Mills connection with uniformly bounded energy. Then we have, up to a sub-sequence, 		\begin{enumerate} 	
			\item There exists finitely many points $\{p_1,\dots, p_N\}$ and a Yang-Mills connection $A^{0,1}$ on $M$ such that $A_k$ converges to $A^{0,1}$ locally smootly modulo gauge, away from $\{p_1,\dots, p_N\}$,
			\item For each $i\in \{1, \dots, N\}$, there exists $N_i\in \N$ sequences of points $(p_k^{i,j})_k$ converging to $p_i$,  $N_i$ sequences of scalars $(\lambda_k^{i,j})_k$ converging to zero, $N_i$ non-trivial Yang-Mills connections on $\Sph^4$, called bubbles,  $A^{i,1},\dots A^{i,N_i}$ such that $(\phi_{k}^{i,j})^*(A_k)$ converges to $\pi_*A^{i,j} $ locally smootly modulo gauge, away from finitely many points, where $\phi_{k}^{i,j} (x)= p_k^{i,j} +\lambda _k^{i,j} x$ in local coordinates and $\pi$ is the stereographic projection. 
			
			 We call $\mathcal{B}:=\{A^{i,j}  |  i \in \llbracket 0,N\rrbracket, j \in \llbracket 1,N_i\rrbracket  \}$	the bubble-tree limit of the sequence, with the convention $N_0=1$.	\item Moreover there is no loss of energy, i.e 			$$ \lim_{k\rightarrow +\infty}\int_{M^4} \vert F_{A_k}\vert^2_h\, \mathrm{vol}_h  = \int_{M^4} \vert F_{A^{0,1}}\vert^2_h\, \mathrm{vol}_h +\sum_{i=1}^N \sum_{j=1}^{N_i} \int_{\Sph^4} \vert F_{A^{i,j}}\vert^2\, \mathrm{vol}. $$	\end{enumerate} 
	\end{theorem}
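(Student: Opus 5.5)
This is the bubble-tree compactness theorem (Uhlenbeck/Rivière type). I will prove it by the standard inductive bubble extraction, driven by $\epsilon$-regularity and the energy gap for non-flat Yang–Mills connections on $\Sph^4$, and then control the necks to get energy identity.

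\emph{Limit on $M$.} Bounded energy and Uhlenbeck's $\epsilon$-regularity (if $\int_{\mathrm{B}_{2r}}|F_{A_k}|^2<\epsilon_0$, then all derivatives of $F_{A_k}$ are bounded on $\mathrm{B}_r$ in a Coulomb gauge) give the conclusion away from the concentration set. That set $\{p_1,\dots,p_N\}$ is finite, with $N\le \sup_k\mathcal{YM}(A_k)/\epsilon_0$. On $M\setminus\{p_i\}$, Uhlenbeck's local gauges can be patched to give local smooth convergence modulo gauge to a Yang–Mills connection. The point singularities are then removable by Uhlenbeck's removable singularity theorem, since the energy is finite. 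This yields $A^{0,1}$ and item (1).

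\emph{Bubble extraction.} At each $p_i$, fix $\delta$ small so that $p_i$ is the only concentration point in $\mathrm{B}_\delta(p_i)$. Choose $p^{i,1}_k$ and $\lambda^{i,1}_k$ as the center and smallest radius of a ball in $\mathrm{B}_\delta(p_i)$ carrying energy $\epsilon_0/2$ (the Sacks–Uhlenbeck/Struwe choice). The rescaled connections $(\phi^{i,1}_k)^*A_k$ live on the expanding metrics $h(p_k+\lambda_k\cdot)\to\xi$. They satisfy $\epsilon$-regularity at unit scale except at finitely many new concentration points, so they converge to a Yang–Mills connection on $\R^4$. By conformal invariance and removable singularities, this limit extends to $\pi_*A^{i,1}$ on $\Sph^4$, and it is nontrivial by the normalization. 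The new concentration points, together with the energy escaping at infinity on the $\R^4$ scale, are treated recursively at new scales. Any nontrivial Yang–Mills connection on $\Sph^4$ has energy at least a gap $\epsilon_1>0$, so the recursion stops after at most $\sup_k\mathcal{YM}(A_k)/\epsilon_1$ steps. This gives item (2), after a diagonal subsequence.

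\emph{No energy loss.} This is the main obstacle. It remains to show that the energy in each neck $\mathrm{B}_{\delta}(p^{i,j}_k)\setminus\mathrm{B}_{\lambda^{i,j}_k/\delta}(p^{i,j}_k)$ (with inner bubbles removed) tends to $0$ as $k\to\infty$ and then $\delta\to0$.

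1. By construction, every dyadic annulus in the neck carries energy less than $\epsilon_0$. $\epsilon$-regularity then gives the pointwise bound $r^2|F_{A_k}|\le C\big(\int_{\text{annulus}}|F|^2\big)^{1/2}$, so the neck energy is small on each dyadic piece.

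2. To sum over the roughly $\log(1/\lambda_k)$ dyadic pieces, I would use the Pohozaev identity of \cref{pohozaev1} with $X=r\partial_r$, as in \cref{tensorpohozaev}. Combined with \cref{YMdiv0}, it shows that the radial part $|\iota_{\partial_r}F|^2$ and the tangential part of the energy balance on each sphere, up to metric errors $\gdo{r}$ that are integrable over the neck.

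3. The tangential part is controlled by a Coulomb gauge on the neck, for instance the gauge of \cref{Rivieregauge}. There, $F$ restricted to spheres is estimated through the first spectral gap of the Laplacian on $\Sph^3$. This yields a differential inequality for $E(r)=\int_{\mathrm{B}_r\setminus \mathrm{B}_{\lambda/\delta}}|F|^2$ of the form $E(r)\le C r E'(r)+\text{small}$.

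4. Integrating that inequality gives the decay $\int_{\text{neck}}|F|^2\le C(\delta^{\beta}+o_k(1))$ for some $\beta>0$. Summing over the finitely many necks gives item (3).

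I expect the neck analysis to be the delicate part, in particular making the gauge uniform across all dyadic scales simultaneously and absorbing the curvature of $h$. Here I would follow \cite[Theorem VII.3]{rivière2015variations}.
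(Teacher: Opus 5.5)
You have the idea the paper has in mind for (1)--(2): $\epsilon$-regularity, finiteness of the concentration set, removable singularities, rescaling, and the energy gap on $\Sph^4$ bounding the number of extraction steps. The paper gives no more proof than that one-sentence idea, and it takes everything else from Rivière's work, in particular the energy identity (3). The problem is in the part where you go beyond the paper, the neck analysis.

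\textbf{The gap: step 3 is circular.} Step 3 rests on the gauge of \cref{Rivieregauge}. As stated in this paper, that theorem holds \emph{under the conditions of} \cref{bubbletree}, and item (3) there is exactly the vanishing of the neck energy you are trying to prove. Its pointwise bound $|F_{A_k}|\le C(1+\lambda_k^2r^{-4})$ also integrates at once to $\int_{\mathrm{B}_\delta\setminus\mathrm{B}_{\lambda_k/\delta}}|F_{A_k}|^2\le C\delta^4+o_k(1)$. So if that gauge were available, your steps 2--4 would be superfluous.

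\textbf{What is actually missing.} The real content of (3) is a gauge on the \emph{whole} neck obtained from the dyadic smallness $r^2|F_{A_k}|\le C\sqrt{\epsilon}$ alone. Uhlenbeck's Coulomb gauge theorem does not give this directly. On a long neck it needs the total $L^2$ norm of the curvature to be small, which is the conclusion you want.

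\textbf{How to repair it.} You can build the gauge by hand.
\begin{enumerate}
\item On each sphere $\partial\mathrm{B}_r$, put the restricted connection in Coulomb gauge. It can be taken small, because the only flat connection on the simply connected $\Sph^3$ is the trivial one.
\item Use the residual constant gauge freedom, now allowed to depend on $r$, to make $A(\partial_r)$ mean-zero on each sphere.
\end{enumerate}
This gives $r|A|\le C\sqrt{\epsilon}$ on the neck. The spectral gaps of $\Sph^3$, for functions and for coexact $1$-forms, then give $\int_{\partial\mathrm{B}_r}|A|^2\le Cr^2\int_{\partial\mathrm{B}_r}|F|^2$.

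Now write $\int|F|^2=\int\langle F,\diff A+\frac12[A\wedge A]\rangle$ and integrate by parts using the Yang--Mills equation. This yields
$(1-C\sqrt{\epsilon})\int_{\mathrm{neck}}|F_{A_k}|^2\le C\sum_{r\in\{\lambda_k/\delta,\,\delta\}}r\int_{\partial\mathrm{B}_r}|F_{A_k}|^2=O(\delta^4)+o_k(1)$.
The last equality uses the smoothness of the limit at the concentration point and the $|x|^{-4}$ decay of the bubble curvature at infinity. The metric errors are of relative order $r^2$ and can be absorbed.

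In this form the Pohozaev balance of your step 2 is not needed. The argument is essentially the decay argument of Uhlenbeck's removable singularity theorem, run on a two-ended neck. It is independent of the interpolation-space machinery behind the results of Rivière that the paper cites.
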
 
	\begin{definition}
		The bubble-tree $\mathcal{B}$ as defined in \cref{branchthm} can be ordered in the following manner: for all $i\in \llbracket 1,N\rrbracket, j \in \llbracket 1,N_i\rrbracket$, $A^{0,1} \ll A^{i,j}$ and for all $i\in \llbracket 1,N\rrbracket, j , l \in \llbracket 1,N_i\rrbracket$
		${A}^{i,j} \ll {A}^{i,l}$ if $\lambda_k^{i,l} =o(\lambda_k^{i,j})$. Moreover, if $(i,j)$ and $(i',j')$, $i'\in\{0,i\}$, are such that ${A}^{i',j'} \ll {A}^{i,j}$ and there is no $l$ satisfying  ${A}^{i',j'} \ll {A}^{i,l} \ll {A}^{i,j}$, we write  ${A}^{i',j'} < {A}^{i,j}$.
	\end{definition}
	\begin{remark}
	\label{branch}
This order relation permits to define branches for each point of concentration $p_i$ there exists $k_i$ branches which consist of a bottom bubble $\hat{A}^{i,l}$ and a  set of $\{\hat{A}^{i,j,l} \, \vert \, l\in \llbracket 1,k_{i,j}\rrbracket \}$ such that for all $l\in \llbracket 1,k_{i,j}\rrbracket$ we have $\hat{A}^{i,j} \ll \hat{A}^{i,j,l}$. 
	\end{remark}
	  \begin{theorem} \label{mainthm4}
	      Consider a sequence $(A_k)_k$ of Yang-Mills connections with bubble-tree limit $\mathcal{B}$. Fix $A_\infty, \widehat{A}_\infty\in \mathcal{B}$ such that $A_\infty< \widehat{A}_\infty$. Identifying the concentration point with $0\in \R^4$ and denoting by $F^\infty$, $\widehat{F}^\infty$ and $\tilde{F}^\infty$ the curvature tensors of $A_\infty$, $\widehat{A}_\infty$ and $\psi^*\widehat{A}_\infty$ respectively, where $\psi:x \mapsto x/|x|^2$ is the inversion, then for every pair of indices $(i,j)$
		    \begin{equation*}
		         \langle F^{\infty}_{\alpha[i} , \tilde{F}^{\infty}_{j]\alpha} \rangle (0) + \delta_{ij} \langle F^{\infty}, \tilde{F}^{\infty}\rangle (0) +  \int_{\R^4} \langle  S_{\widehat{F}^{\infty}} \otimes \mathrm{W}, \mathrm{T}^{i j} \rangle \mathrm{vol} = 0
		    \end{equation*} where $\mathrm{W}$ is the Weyl tensor at $0$ in normal coordinates and \begin{equation*}
		        \mathrm{T}^{i j}_{ab\alpha\mu\beta\nu} = \frac{1}{3\pi^2}\left(\delta_{\alpha a}  \delta_{\beta b} x^\nu x^{[j}\delta_{i]\mu} -\delta_{\alpha a} \delta_{b[i}\delta_{j]\beta} x^\mu x^\nu   +\delta_{\alpha a}  \delta_{\beta b}\delta_{ij}  x^\mu x^\nu  \right).
		    \end{equation*} Moreover, for all $\xi \in \mathfrak{g}$, \begin{equation*}
		        \langle \tilde{F}_\infty, [\xi, F_\infty]\rangle (0) = 0
		    \end{equation*}
	\end{theorem}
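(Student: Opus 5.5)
The plan is to reduce \cref{mainthm4} to the single-bubble case already proved in \cref{mainthm2}, \cref{mainthm3} and \cref{mainthm5}, by localising at the neck that separates $A_\infty$ from $\widehat{A}_\infty$. Since $A_\infty<\widehat{A}_\infty$, there are scales $\mu_k\gg\lambda_k$. Here $\lambda_k$ is the scale of $\widehat{A}_\infty$, and $\mu_k$ is the scale of $A_\infty$, with $\mu_k\equiv 1$ if $A_\infty=A^{0,1}$. After rescaling by $\mu_k$, and using the points $p_k$ of $\widehat{A}_\infty$ as centres, the connections $\check{A}_k\coloneq\phi^*A_k$ live on balls of radius $\mu_k^{-1}$ with the metric $h_k=\mu_k^{-2}\phi^*h$.

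In the rescaled picture, $\check{A}_k$ converges to $A_\infty$ away from finitely many points. The bubble $\widehat{A}_\infty$ appears at scale $\lambda_k/\mu_k\to0$ at the origin, and the no-neck-energy property (iii) holds on the annulus between the two scales. The crucial observation is that no other bubble is visible in this annulus. Bubbles $A'$ with $A_\infty\ll A'\ll\widehat{A}_\infty$ are excluded by the definition of $<$. Bubbles above $\widehat{A}_\infty$ live inside $\mathrm{B}_{C\lambda_k}$. Bubbles at other points, or in other branches, stay at a definite rescaled distance from $0$.

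Consequently \cref{Rivieregauge} applies on $\mathrm{B}_\delta\setminus\mathrm{B}_{\lambda_k/\mu_k}$. \Cref{asymptoticneckexpansion} then holds on $\partial\mathrm{B}_{\sqrt{\lambda_k/\mu_k}}$, as already noted in \cref{estimategeneralbubbletree}.

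Next I rerun the proofs of \cref{simplifypohozaevtensor} and \cref{mainthm2} with the Pohozaev ball $\mathrm{B}_{\sqrt{\lambda_k/\mu_k}}$. The interior error terms were controlled by the neck estimate \eqref{estaux1} together with $\int_{\mathrm{B}_{\lambda_k}}|F|^2$ bounded. Inside $\mathrm{B}_{\lambda_k/\delta}$ the curvature now also contains the higher bubbles. These still have uniformly bounded energy, so the $\gdo{\lambda_k}$ bound survives. The limit of the interior term, however, must be identified with $\int_{\R^4}$ applied to $S_{\widehat F^\infty}$ only.

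This identification is the step I expect to be the main obstacle. The rescaled connections converge to $\widehat{A}_\infty$ only away from finitely many points, where the higher bubbles concentrate. I plan to handle it with the weights $r\,\nabla\gamma$ and $\gamma$, which are $\gdo{r^2}$ and hence bounded on $\mathrm{B}_{1/\delta}$. The energy concentrating at a higher point $q$ contributes the limit of $\int \gamma\, S_{F}$ near $q$. Applying \cref{pohozaev1}, or more simply noting that each higher bubble at a shrinking scale has stress tensor with vanishing weak limit, one of these arguments must give zero contribution. I would try the second first: a bubble at scale $\epsilon_k\to0$ around $q$ contributes $\gamma(q)\int S_{F_{\rm bubble}}+o(1)$. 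Its integral over $\R^4$ is zero, because integrating the divergence-free, traceless $S$ against the conformal fields $X_T$ on large balls shows $\int S$ is a traceless-symmetric-orthogonal quantity whose boundary terms vanish by the $r^{-8}$ decay of the curvature.

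The metric $h_k$ converges to the flat metric. Its quadratic Taylor term at $0$, after rescaling, is $\mu_k^2\gamma$. So if $\mu_k\to0$, the Weyl term is multiplied by $\mu_k^2\to0$ relative to the original normalisation. I would track this carefully: the result should be stated in the rescaled geodesic coordinates, where $\mathrm{W}$ is then the Weyl tensor of the limiting rescaled metric. That tensor equals $\mathrm{W}$ when $A_\infty=A^{0,1}$ and vanishes otherwise, and the statement holds in either case.

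The boundary computation \eqref{aux1}--\eqref{aux4} is local to $\partial\mathrm{B}_{\sqrt{\lambda_k/\mu_k}}$ and goes through verbatim. \Cref{mainthm3} then converts $\gamma$ into the Weyl form, giving the first identity.

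The gauge identity follows in the same way from the proof of \cref{mainthm5}. It only used the decomposition of $e_k$ from \cref{key} on the boundary sphere, which is still available by \cref{estimategeneralbubbletree}.
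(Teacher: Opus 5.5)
Your proposal is correct and follows the paper's argument: rerun the single-bubble proof in the neck separating $A_\infty$ from $\widehat{A}_\infty$ (via \cref{estimategeneralbubbletree}), treat the interior term as the only new issue, and kill the Dirac masses that higher bubbles add to the weak limit of $S_{F_{\widehat{A}_k}}\mathrm{vol}$ using $\int_{\R^4}S_{F_\omega}\,\mathrm{vol}=0$; your explicit rescaling by $\mu_k$, under which the Weyl term disappears when $A_\infty$ is itself a bubble, spells out what the paper leaves implicit (cf.\ \cref{mainrmk}). One slip in justifying $\int S=0$: the conformal fields $X_T$ give no information, since $\mathcal{L}_{X_T}\xi\in\R\xi$ and $S$ is traceless, so you must test against every linear field $x^\alpha\partial_\beta$, i.e.\ use $\int_{\mathrm{B}_R}S_{\alpha\beta}\,\mathrm{vol}=\int_{\partial\mathrm{B}_R}x^\alpha S(\partial_\beta,\nu)\to0$, which follows from divergence-freeness and the $r^{-8}$ decay and is exactly the paper's identity $\int_{\R^4}\langle S_{F_\omega},\nabla^{\mathrm{s}}(x^\alpha\diff x^\beta)\rangle\,\mathrm{vol}=0$.
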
 \begin{proof} Using \cref{estimategeneralbubbletree}, part of the proof of \cref{mainthm2} can be still be carried on in the general bubble-tree case. In this context, \cref{aux5} is still valid, that is
	    \begin{equation} 
   0 \equiv  \pi^2 F^{\infty} \circ \tilde{F}^{\infty}(0) - \int_{\mathrm{B}_{1/\delta}} \left(\frac{1}{2} \langle  S_{F_{\widehat{A}_k}} , \nabla\gamma\rangle \otimes r \diff r -S_{F_{\widehat{A}_k}} \circ \gamma + \frac{1}{4}\langle S_{F_{\widehat{A}_k}}, \gamma \rangle \xi \right)\mathrm{vol} + \widetilde{\varpi}_{k,\delta}\label{aux6}
\end{equation} where \begin{equation*}
    \lim_{\delta\to 0}\lim_{k\to +\infty} \widetilde{\varpi}_{k,\delta} = 0.
\end{equation*} In the general case, an additional step is required to deal with the integral term. The bubble-tree convergence implies that, in the (vector-valued) measure $S_{F_{\widehat{A}_k}} \mathrm{vol}$ on $\R^4$ weakly converges to \begin{equation*}
    S_{F_{\widehat{A}_\infty}} \mathrm{vol} + \sum_{\omega \in \mathcal{B}, A\ll \omega } \left(\int_{\R^4}S_{F_{\omega}} \mathrm{vol} \right)\delta_{p_{\omega}}
\end{equation*} where $p_{\omega}$ is the concentration point and the curvature of the bubble $\omega$ on $\widehat{A}_\infty$. However, for every pair of indices $(\alpha,\beta)$, \begin{align*}
    \int_{\R^4}S_{F_{\omega}}(\partial_\alpha, \partial_\beta) \mathrm{vol} =  \int_{\R^4}\langle S_{F_{\omega}}, \nabla^{\mathrm{s}}(x^\alpha \diff x^\beta)\rangle \mathrm{vol} =0
\end{align*} because $S_{F_{\omega}}$ is divergence-free. This proves that in fact $S_{F_{\widehat{A}_k}} \mathrm{vol}$ converges in the weak sense to $  S_{F_{\widehat{A}_\infty}} \mathrm{vol} $ and that the limit of \cref{aux6} when $k$ goes to $+\infty$ and $\delta$ goes to $0$ is the same as the one in the single bubble case, given in \cref{mainthm3}. The argument is even simpler for the gauge symmetry obstruction, since there is no corresponding integral term in the proof of \cref{mainthm5}.
	\end{proof}
	\begin{remark} \label{mainrmk} If $A_\infty \neq A^{0,1}$, that is if $A_\infty$ is itself a bubble, $\mathrm{W}=0$. If $\widehat{A}_\infty$ is SD or ASD, $S_{\widehat{F}^\infty}=0$ by \cref{stressSDASD}. In both cases, the first of the previous constraints become simply  \begin{equation*}
		         \langle F^{\infty}_{\alpha[i} , \tilde{F}^{\infty}_{j]\alpha} \rangle (0) + \delta_{ij} \langle F^{\infty}, \tilde{F}^{\infty}\rangle (0) = 0.
		    \end{equation*} Equivalently, we get \begin{align*}
		        \left(F^{\infty} \circ \tilde{F}^{\infty} -   \tilde{F}^{\infty} \circ F^{\infty} \right) (0) &= 0\\
		          \langle F^{\infty}, \tilde{F}^{\infty}\rangle (0)&= 0.
		    \end{align*}
	\end{remark}
	\section{Bubbling obstructions}
	\label{bbobstruction}
	
	 \begin{remark}
	     We use $\Lambda^+\R^4\otimes \mathfrak{g} \simeq \mathrm{Hom}(\mathfrak{g}, \Lambda^+\R^4)$ to identity $\mathfrak{g}$-valued a self-dual $2$-form $F$ with a map $\mathfrak{g}\to \Lambda^+\R^4$, explicitly given by $q \mapsto \langle q , F\rangle_{\mathfrak{g}}$.
	 \end{remark}
	 \begin{lemma}  \label{lemmasingv} Consider a sequence $(A_k)_k$ of Yang-Mills connections with bubble-tree limit $\mathcal{B}$. Fix $A_\infty, \widehat{A}_\infty\in \mathcal{B}$ such that $A_\infty< \widehat{A}_\infty$. Assume that $A_\infty$ is SD and $\widehat{A}_\infty$ is ASD. Compatibility conditions from \cref{mainthm4,mainthm5} are equivalent to the fact that $ \tilde{F}^\infty (F^\infty)^*:\Lambda^+\R^4\to \Lambda^+\R^4$ is traceless symmetric and $(F^\infty)^* \tilde{F}^\infty : \mathfrak{g} \to \mathfrak{g}$ is orthogonal to $\mathrm{ad}_\xi$ for all $\xi \in \mathfrak{g}$.
	 \end{lemma}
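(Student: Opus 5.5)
The plan is to reduce everything to linear algebra at the point $0$. The first step is to check that $\tilde{F}^\infty(0)$ is self-dual. Since $\widehat{A}_\infty$ is ASD, the ASD case of \cref{stressSDASD} gives $S_{\widehat{F}^\infty}=0$, so the integral term in \cref{mainthm4} drops out. The inversion $\psi$ is conformal and orientation-reversing, so $\tilde{F}^\infty=\psi^*\widehat{F}^\infty$ is self-dual. By assumption $F^\infty$ is also self-dual. By \cref{mainrmk}, the conditions from \cref{mainthm4} are therefore equivalent to
\begin{equation*}
\left(F^{\infty}\circ\tilde{F}^{\infty}-\tilde{F}^{\infty}\circ F^{\infty}\right)(0)=0,\qquad \langle F^{\infty},\tilde{F}^{\infty}\rangle(0)=0,
\end{equation*}
and the condition from \cref{mainthm5} reads $\langle F^\infty,[\tilde{F}^\infty,\xi]\rangle(0)=0$ for all $\xi\in\mathfrak{g}$.

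Next I translate these into statements about the maps $F^\infty,\tilde F^\infty:\mathfrak g\to\Lambda^+\R^4$. Fix an orthonormal basis $(e_a)$ of $\mathfrak{g}$ and write $F^\infty(0)=\sum_a F_a e_a$ and $\tilde F^\infty(0)=\sum_a \tilde F_a e_a$, with $F_a,\tilde F_a\in\Lambda^+\R^4$. Then
\begin{equation*}
\tilde F^\infty (F^\infty)^*=\sum_a \tilde F_a\otimes F_a \quad\text{as an endomorphism of } \Lambda^+\R^4,
\end{equation*}
so its trace is $\sum_a\langle F_a,\tilde F_a\rangle$, which is $\langle F^\infty,\tilde F^\infty\rangle(0)$ up to the normalisation constant. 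The trace condition is thus exactly the second equation above.

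The core step, and the one I expect to be the main obstacle, is the antisymmetric part. I identify each $2$-form with a skew $4\times 4$ matrix. Then $\omega\circ\eta$ corresponds to the matrix $\omega^T\eta=-\omega\eta$. The self-dual forms realise the imaginary quaternions: with an orthonormal basis $I,J,K$ of $\Lambda^+$ (suitably normalised), $I^2=J^2=K^2=-\mathrm{id}$ and $IJ=K$ cyclically. This gives, for $\omega,\eta\in\Lambda^+$,
\begin{equation*}
\omega\circ\eta=c\,\langle\omega,\eta\rangle\,\xi+c'\,\omega\times\eta,
\end{equation*}
where $\omega\times\eta\in\Lambda^+\cong\R^3$ is the cross product, viewed as a skew $2$-tensor, and $c,c'\neq 0$. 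Hence the skew part of $F^\infty\circ\tilde F^\infty$ is a nonzero multiple of $\sum_a F_a\times\tilde F_a$. Under the standard isomorphism $\Lambda^2(\R^3)\cong\R^3$, this is precisely the image of the antisymmetric part of $\sum_a\tilde F_a\otimes F_a$. So the first equation holds iff $\tilde F^\infty (F^\infty)^*$ is symmetric. The care needed here is to check the normalisations of $\langle\cdot,\cdot\rangle$ and of the identification of $\Lambda^+$ with quaternions, so that the constants $c,c'$ are indeed nonzero; nonvanishing is all that matters for the equivalence.

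Finally, I rewrite the gauge condition. Expanding in the basis,
\begin{equation*}
\langle F^\infty,[\tilde F^\infty,\xi]\rangle(0)=\sum_{a,b}\langle F_a,\tilde F_b\rangle\,\langle e_a,[e_b,\xi]\rangle_{\mathfrak g}.
\end{equation*}
Here $\langle F_a,\tilde F_b\rangle$ are the matrix entries of $(F^\infty)^*\tilde F^\infty:\mathfrak g\to\mathfrak g$, and $\langle e_a,[e_b,\xi]\rangle$ are, up to sign, those of $\mathrm{ad}_\xi$. The expression is therefore the Hilbert--Schmidt pairing of $(F^\infty)^*\tilde F^\infty$ with $\pm\mathrm{ad}_\xi$. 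Its vanishing for all $\xi$ is the stated orthogonality, which completes the equivalence.
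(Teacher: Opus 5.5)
Your proof is correct. The paper states this lemma without any proof: it is used directly in the proof of \cref{branchthm} and in the $\mathbf{CP}^2$ proposition. So there is no written argument to compare against. Your route is the natural one suggested by \cref{mainrmk} and the identification remark just before the lemma: reduce to the vanishing of the trace and skew part of $(F^\infty\circ\tilde F^\infty)(0)$, then do linear algebra on $\Lambda^+\R^4$. Each step checks out, including the self-duality of $\tilde F^\infty$ coming from the orientation reversal of $\psi$.

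There are two small points.

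First, the normalisation check you flag as the main obstacle can be bypassed. For $\omega,\eta\in\Lambda^+$ viewed as skew matrices, $\omega\circ\eta$ corresponds to $-\omega\eta$.
\begin{itemize}
\item Its symmetric part, $-\tfrac12(\omega\eta+\eta\omega)$, is a multiple of the identity.
\item Its skew part is $-\tfrac12[\omega,\eta]$, which again lies in $\Lambda^+$.
\item The pair $(\Lambda^+,[\cdot,\cdot])$ is isomorphic to $(\R^3,\times)$, and the inner product on $\Lambda^+$ is ad-invariant, hence proportional to the standard one.
\end{itemize}
So the skew part of $(F^\infty\circ\tilde F^\infty)(0)$ vanishes iff $\sum_a[F_a,\tilde F_a]=0$, iff $\tilde F^\infty(F^\infty)^*$ is symmetric. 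No constant needs to be computed.

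Second, for a general pair $A_\infty<\widehat A_\infty$ in a bubble tree, the relevant gauge condition is the one in \cref{mainthm4}, namely $\langle\tilde F^\infty,[\xi,F^\infty]\rangle(0)=0$. \Cref{mainthm5} is written for the single-bubble case. By ad-invariance of the inner product on $\mathfrak g$, the two conditions are the same identity, namely the one you treat, so your conclusion is unaffected.
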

	 \begin{remark} When $\mathfrak{g}=\mathfrak{su}(2)$, $\{\mathrm{ad}_\xi, \xi \in \mathfrak{g}\}$ is precisely the set of skew-symmetric transformations of $\mathfrak{g}$ hence the last condition is saying that $(F^\infty)^* \tilde{F}^\infty$ is also symmetric.
	 \end{remark}
	
	A first consequence is the extension of the obstruction obtained in \cite{yin_blow-up_2023} for the blow-up of instantons bubbles of degree $\pm 1$.

\begin{theorem}
	\label{branchthm} Consider a sequence $(A_k)_k$ of Yang-Mills connections on $(M^4,h)$ with bubble-tree limit $\mathcal{B}$ as in \cref{bubbletreethm}. Fix $A_\infty, \widehat{A}_\infty\in \mathcal{B}$ such that $A_\infty< \widehat{A}_\infty$.  Assume that $A_\infty$ and $\widehat{A}_\infty$ are both $\pm 1$-instantons on $\Sph^4$. Then their either both SD or both ASD. More generally, if we have a chain of bubbles $A_\infty^{i,j} < \dots < A_\infty^{i,l}$ which are all $\pm1$-instantons, if $A_\infty^{i,j} $ is SD (resp. ASD) then they all are.
\end{theorem}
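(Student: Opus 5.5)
We argue by contradiction on a single pair $A_\infty<\widehat{A}_\infty$. Suppose, say, $A_\infty$ is SD and $\widehat{A}_\infty$ is ASD; the opposite case follows by reversing the orientation of $\R^4$, which swaps SD and ASD and leaves the compatibility conditions unchanged. The chain statement then follows by induction along consecutive pairs $A^{i,j}_\infty<A^{i,j'}_\infty<\cdots$ of the chain, each being a pair of the form considered.

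Since $A_\infty$ is a bubble here, the relevant metric is flat and $\mathrm{W}=0$. Since $\widehat{A}_\infty$ is ASD, its stress-energy tensor vanishes by \cref{stressSDASD}. So \cref{mainrmk} applies, together with the gauge condition of \cref{mainthm4}, and \cref{lemmasingv} converts everything into linear algebra at $0$.

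First I must check that $\tilde F^\infty=\psi^*\widehat F^\infty$ is a self-dual form, so that the lemma's maps make sense. This holds because the inversion $\psi$ is conformal and orientation-reversing, so the pullback of an ASD form is SD. Then, viewing $F^\infty(0)$ and $\tilde F^\infty(0)$ as maps $\mathfrak{g}=\mathfrak{su}(2)\to\Lambda^+\R^4$ as in the remark before \cref{lemmasingv}, the conditions say two things:
\begin{itemize}
\item $\tilde F^\infty (F^\infty)^*$ is a traceless symmetric endomorphism of the $3$-dimensional space $\Lambda^+\R^4$;
\item $(F^\infty)^*\tilde F^\infty$ is symmetric on $\mathfrak{su}(2)$.
\end{itemize}

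The key input is the pointwise structure of the curvature of a degree $\pm1$ instanton. For the standard $1$-instanton on $\Sph^4$, in any local trivialisation and at any point, the curvature viewed as a map $\mathfrak{su}(2)\to\Lambda^\pm$ is a nonzero multiple of an isometry. This is the 't Hooft-symbol form $F=c(x)\,\eta^a_{\mu\nu}\,e_a\,\diff x^\mu\wedge\diff x^\nu$ with $c\neq 0$ everywhere. By the $\mathrm{SO}(5,1)$ conformal transitivity of the moduli space, every $\pm1$-instanton is conformally and gauge equivalent to it. Conformal maps rescale $\Lambda^2$ isometrically up to a factor, and gauge transformations act by $\mathrm{Ad}$, which is an isometry of $\mathfrak{su}(2)$. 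Hence this property persists after the stereographic identifications and after pullback by $\psi$. Consequently $F^\infty(0)=aU$ and $\tilde F^\infty(0)=bV$, where $U,V:\mathfrak{su}(2)\to\Lambda^+$ are isometries and $a,b\neq 0$.

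Then $\tilde F^\infty(F^\infty)^*=ab\,VU^*$ is $ab$ times an orthogonal transformation of the $3$-dimensional space $\Lambda^+\R^4$. If it is symmetric, then $VU^*$ is a symmetric orthogonal map, so its eigenvalues lie in $\{\pm1\}$. Its trace is therefore an odd integer, hence nonzero, which contradicts tracelessness. So the trace condition alone already gives the contradiction; the gauge condition is not even needed.

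The main obstacle I expect is justifying carefully that the curvature of $A_\infty$ and of $\tilde A_\infty$ at the concentration point, taken in the gauge of \cref{Rivieregauge} and after all the identifications by stereographic projection, rescaling and inversion, is still a nonzero multiple of an isometry $\mathfrak{su}(2)\to\Lambda^+$. This requires checking that each of these operations only multiplies by a conformal factor and conjugates by $\mathrm{Ad}$. If that is delicate, I would instead appeal to the explicit BPST formula at the single point $0$ together with the invariance of the property under translations, dilations and inversion.
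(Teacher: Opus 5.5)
Your argument is correct and is essentially the paper's proof. Both combine the same ingredients: the pointwise conformality of the curvature of a $\pm1$-instanton, \cref{lemmasingv}, the fact that a nonzero multiple of a symmetric orthogonal map of a $3$-dimensional space cannot be traceless, and induction along the chain.

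The only difference is which endomorphism carries the symmetry condition. The paper works with $s=(F^\infty)^*\tilde F^\infty$ on $\mathfrak{su}(2)$, whose symmetry comes from the gauge constraint of \cref{mainthm5}. You work with $\tilde F^\infty(F^\infty)^*$ on $\Lambda^+\R^4$, whose symmetry and tracelessness both come from the conformal constraint. Since $F^\infty(0)=aU$ with $U$ an isometry, $\tilde F^\infty(F^\infty)^*(0)=U\,s\,U^{-1}$, so the two conditions coincide here. Your remark that the gauge identity is not needed is therefore right, though it is the symmetry of $\mathrm{P}$, not its trace alone, that does the work.
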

	\begin{proof} Suppose by contradiction that $A_\infty$ is SD and $\widehat{A}_\infty$ is ASD.  Recall the well-known expression for the curvature of the $1$-instanton on $\Sph^4$: it is pointwise of the form \[ F = \frac{|F|}{\sqrt{3}}\sum_{i=1}^3 q_i \theta^i\] where $|F|\neq 0$,  $\{q_i\}_i$ is an orthonomal basis of $\mathfrak{su}(2)$ and $\{\theta^i\}_i$ is an orthonomal basis on $\Lambda^+\R^4$. 	
		Therefore, with the notations of \cref{mainthm4}, as linear maps $ \mathfrak{su}(2) \to \Lambda^+\R^4$, $F^\infty(0)$ and $\tilde{F}^\infty(0)$ are conformal, hence $s=(F^\infty(0))^* \tilde{F}^\infty (0)$ is a conformal linear transformation of the $3$-dimensional space $\mathfrak{su}(2)$. By \cref{lemmasingv}, $s$ must be symmetric and traceless. However, all those constraints cannot be met at the same time: by conformality and symmetry, there exists $\alpha > 0$ and an orthogonal symmetry $\bar{s}$ such that $s = \alpha \bar{s}$. Since $\bar{s}$ is a symmetry, $\mathrm{Tr}(\bar{s}) = 3 - 2 \,\mathrm{dim \, ker}(\bar{s}+ \mathrm{id}) \neq 0$ which contradicts the fact that $s$ is also traceless. The general case, follows from a straightforward induction.
	\end{proof}

In particular, when $(M^4,h)$ is the round sphere $\Sph^4$ the limit solution can be considered as a bubble.  We deduce have the following corollary:
\begin{corollary} Consider a sequence $(A_k)_k$ of Yang-Mills connections on the round $\Sph^4$ with bubble-tree limit $\mathcal{B}$ as in \cref{bubbletreethm}. Assume that $\mathcal{B}$ consists only of $\pm1$-instantons then if $A_\infty$ is SD (resp. ASD) then all the bubbles are SD (resp. ASD).
\end{corollary}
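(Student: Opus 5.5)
We treat the weak limit $A_\infty = A^{0,1}$ on the round $\Sph^4$ exactly like a bubble and then propagate the sign along the order relation $<$ by repeated use of \cref{branchthm}.

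First I check that the pair $A^{0,1} < A^{i,j}$ falls under \cref{branchthm}. The round metric is conformally flat, so its Weyl tensor vanishes identically. In any case the bubbles are assumed SD or ASD, so their stress-energy tensors vanish by \cref{stressSDASD}. Hence for any pair $A_\infty < \widehat{A}_\infty$ in $\mathcal{B}$, the integral term in \cref{mainthm4} drops out. We are left with the algebraic constraints of \cref{mainrmk} together with the gauge constraint of \cref{mainthm5}, which is precisely the input of \cref{lemmasingv}.

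The proof of \cref{branchthm} only uses the following fact about $A_\infty$: at the concentration point its curvature, viewed as a linear map $\mathfrak{su}(2)\to\Lambda^\pm\R^4$, is a nonzero conformal map. For $A^{0,1}$ this holds at every point of $\Sph^4$, because $A^{0,1}$ is a $\pm1$-instanton on the round sphere, i.e.\ a conformal image of the standard instanton. Moreover, self-duality is conformally invariant and the geodesic normal chart is orientation preserving, so SD/ASD is well defined at the concentration point. Consequently \cref{branchthm} applies verbatim to any pair $A^{0,1} < A^{i,j}$, even though $A^{0,1}$ is the limit on $M=\Sph^4$ rather than a bubble on $\R^4$.

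Next comes the induction along the tree. Every bubble $A^{i,j}$ sits at the top of a finite chain
\[
A^{0,1} < A^{i,j_1} < \dots < A^{i,j_m} = A^{i,j}.
\]
To see this, take the bubbles $A^{i,l}$ with $A^{0,1}\ll A^{i,l}\ll A^{i,j}$, ordered by scale. This set is finite, and consecutive elements are related by $<$ by definition. Applying \cref{branchthm} to each consecutive pair shows that all elements of the chain share the SD/ASD type of $A^{0,1}$. Since every bubble lies on such a chain, all bubbles have the sign of $A^{0,1}$.

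The step I expect to need the most care is the first one: checking that the inputs of \cref{branchthm} (Rivière's gauge, the identification of fibres at $0$, and the expansion of \cref{asymptoticneckexpansion}) hold when the bottom element is the limit on the round sphere. I would handle this by noting that all the preceding results were proved for a general closed $(M^4,h)$, with $A^{0,1}$ in the role of $A_\infty$. The only instanton-specific ingredient is the nondegenerate conformal form of $F_{A^{0,1}}(p_i)$, which holds everywhere on $\Sph^4$.
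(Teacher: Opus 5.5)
Your proposal is correct and is essentially the paper's argument. On the round $\Sph^4$ the weak limit $A^{0,1}$ is itself a $\pm1$-instanton on $\Sph^4$, so it can be treated as a bubble; \cref{branchthm} then applies directly to each pair $A^{0,1}<A^{i,j}$, and the sign propagates by induction along chains. One small fix: $\ll$ is only a partial order by scale, so instead of ordering the intermediate bubbles by scale you should take a maximal $\ll$-chain from $A^{0,1}$ to $A^{i,j}$, since consecutive elements of a maximal chain are automatically related by $<$.
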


A distinctive feature of \cref{mainthm} compared to \cite[Theorem 1.4]{yin_blow-up_2023} is that no local conformal flatness assumption is required at the concentration point. We illustrate this on $\mathbf{CP}^2$ endowed with its Fubini-Study metric, whose Weyl tensor does not vanish. Using the explicit description of $\mathrm{SU}(2)$-instantons on $\mathbf{CP}^2$ due to Groisser \cite{groisser1990geometry} (see also Habermann \cite{habermann1992family}), we rule out the following bubbling configuration.
	\begin{proposition} Consider a sequence $(A_k)_k$ of Yang-Mills connections the round $\Sph^4$ with bubble-tree limit $\mathcal{B}$ as in \cref{bubbletreethm}. Assume that $A^{0,1}$ is a $1$-instanton. Then no bubble $A^{i,j}$ such that $A^{0,1}<A^{i,j}$ can be a $-1$-instanton.
	\end{proposition}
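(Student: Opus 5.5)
The plan is to reduce the statement to the algebraic contradiction already used in the proof of \cref{branchthm}. On the round $\Sph^4$ the Weyl tensor vanishes identically, so the integral term in \cref{mainthm4} drops out whatever the bubble is. Fix a concentration point, identified with $0$ in normal coordinates, and a bubble $\widehat{A}_\infty = A^{i,j}$ with $A_\infty = A^{0,1} < A^{i,j}$. Suppose, for contradiction, that $A^{0,1}$ is a $1$-instanton, hence SD, and $A^{i,j}$ is a $-1$-instanton, hence ASD. By \cref{mainrmk} and \cref{mainthm5}, at $0$ in the gauge of \cref{Rivieregauge} we have
\begin{equation*}
F^\infty\circ\tilde F^\infty-\tilde F^\infty\circ F^\infty=0,\qquad \langle F^\infty,\tilde F^\infty\rangle=0,\qquad \langle F^\infty,[\tilde F^\infty,\xi]\rangle=0\ \ \forall \xi\in\mathfrak{su}(2).
\end{equation*}

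The first step is to check that $\tilde F^\infty=\psi^*\widehat F^\infty$ is self-dual near $0$. The inversion $\psi$ is conformal and reverses orientation, so pulling back an ASD $2$-form gives an SD one. Since $\widehat{A}_\infty$ extends smoothly to $\Sph^4$, $\tilde F^\infty$ extends smoothly across $0$, and it is the curvature of the same instanton viewed near the point at infinity. This puts us exactly in the setting of \cref{lemmasingv} with both $F^\infty(0)$ and $\tilde F^\infty(0)$ in $\Lambda^+\R^4\otimes\mathfrak{su}(2)$. The lemma, together with the remark that $\mathrm{ad}(\mathfrak{su}(2))$ is the full space of skew endomorphisms, then says that $s=(F^\infty(0))^*\tilde F^\infty(0)$ is a symmetric, traceless endomorphism of $\mathfrak{su}(2)$.

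The second step is to use the explicit form of $\pm1$-instanton curvatures on $\Sph^4$ recalled in the proof of \cref{branchthm}: pointwise $F=\frac{|F|}{\sqrt3}\sum_i q_i\theta^i$ with $|F|>0$ everywhere. This holds at every point of $\Sph^4$, including $0$ for $A^{0,1}$ and the point at infinity for $A^{i,j}$, because the standard instanton is invariant under the conformal group modulo gauge and its curvature density never vanishes. Consequently $F^\infty(0)$ and $\tilde F^\infty(0)$ are nonzero conformal linear maps $\mathfrak{su}(2)\to\Lambda^+\R^4$, and $s$ is a nonzero conformal map of the $3$-dimensional space $\mathfrak{su}(2)$. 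Being symmetric and conformal, $s=\alpha\bar s$ with $\alpha>0$ and $\bar s$ an orthogonal reflection. Then $\mathrm{Tr}\,\bar s=3-2\dim\ker(\bar s+\mathrm{id})$ is odd, hence nonzero, contradicting tracelessness.

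I expect the main obstacle to be the first step: checking carefully the orientation and gauge conventions. Concretely, one must verify that $\psi^*$ indeed sends the ASD bubble to an SD form at $0$, and that the fibre identification of \cref{Rivieregauge} makes $F^\infty(0)$ and $\tilde F^\infty(0)$ comparable as maps from the same copy of $\mathfrak{su}(2)$. Both points are already implicit in \cref{lemmasingv} and in the proof of \cref{branchthm}, so I would simply invoke them. Finally, the case where $A^{i,j}$ is not directly above $A^{0,1}$ is not covered by the hypothesis $A^{0,1}<A^{i,j}$, so no induction is required.
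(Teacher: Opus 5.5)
As printed, the statement says ``round $\Sph^4$'', but that is a slip. The sentence introducing it, the corollary in the introduction, and the paper's proof all concern $\mathbf{CP}^2$ with the Fubini--Study metric and $G=\mathrm{SU}(2)$; the point is to exhibit an obstruction where $\mathrm{W}\neq 0$. Under the literal reading your argument is correct, but it is exactly \cref{branchthm} applied to the pair $A^{0,1}<A^{i,j}$, and it says nothing about the intended case.

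In the intended case your second step breaks down. $A^{0,1}$ is then a $1$-instanton on $\mathbf{CP}^2$, not a conformal pullback of the standard instanton on $\Sph^4$. So $f=F^\infty(0):\mathfrak{su}(2)\to\Lambda^+\R^4$ is \emph{not} conformal. In Groisser's normal form (up to gauge and $\mathrm{SU}(3)$ isometries), $F_{A^{0,1}}$ is a positive multiple of $-D^2\omega\,\mathbf{i}+t\,\diff z^1\wedge\diff z^2\,\mathbf{j}$. Hence $f^*f$ has eigenvalues proportional to $1,\beta^2,\beta^2$, with $\beta=t/(2\sqrt D)<1/2$ (rank one if $t=0$). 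So $s=f^*\tilde f$ is not a multiple of an orthogonal involution, and the parity-of-the-trace contradiction is unavailable.

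The missing idea, which is what the paper does, is to use conformality on the bubble side only. Since $\tilde f$ is a nonzero conformal map between $3$-dimensional spaces, $\tilde f\tilde f^*=\alpha^2\,\mathrm{id}$ for some $\alpha>0$. The symmetric map $s$ therefore satisfies $s^2=ss^*=f^*\tilde f\tilde f^*f=\alpha^2 f^*f$. Its real eigenvalues are thus, up to a common positive factor, $\pm1,\pm\beta,\pm\beta$. Tracelessness would force $1=\pm\beta\pm\beta$, i.e.\ $2\beta=1$, contradicting $\beta<1/2$.

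The rest of your setup carries over, with one adjustment. On $\mathbf{CP}^2$ the Weyl tensor is nonzero, so the integral term disappears not because $\mathrm{W}=0$, but because the ASD bubble has vanishing stress-energy tensor (\cref{coro3}, \cref{mainrmk}). Also, $\tilde F^\infty(0)$ is still self-dual and conformal, so \cref{lemmasingv} applies as you say.
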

	\begin{proof} 
		Let's proceed again by contradiction. 	Up to gauge and up to the action of the isometry group $\mathrm{SU}(3)$, the curvature $F_{A^{0,1}}$ is of the form  \begin{equation*}
		F_{A^{0,1}} = 2\frac{1-t^2}{(D-t^2)^2}\left( -D^2\omega\, \mathbf{i} + t\, \diff z^1 \wedge \diff z^2 \,\mathbf{j}\right)
		\end{equation*} where $t\in[0,1)$, $\omega$ is the Kähler form and $D= 1 + |z^1|^2 + |z^2|^2$. In particular, since $|\omega|=\sqrt{2}$, $\beta = t |\diff z^1 \wedge \diff z^2|/(2D^2)= t(2\sqrt D)^{-1}$. Write $f$ and $\tilde{f}$ for the maps $\mathfrak{su}(2)\to \Lambda^+\R^4$ associated to $F_{A^{0,1}}$ and $\psi^* F_{A^{i,j}}$ and define $s=f ^* \tilde{f}$. By \cref{lemmasingv}, $s$ is traceless and symmetric. Since $\tilde{f}$ is conformal, $s^2= s s^* =\alpha^2 f ^* f$ for some $\alpha >0$. From the previous expression, the eigenvalues of $f ^* f $ are, up to a common multiplicative factor, $1,\beta^2,\beta^2$. We deduce that the (real) eigenvalues of $s$ must be $1,\beta,\beta$ up to signs. The sum of these eigenvalues must vanish but this is impossible since $\beta <1/2$.
	\end{proof}

	\appendix
	\section{Appendix}
	
	\begin{lemma} \label{formsinteriorproduitscalaire} If $(V^n,\langle \cdot, \cdot\rangle)$ is a euclidean space, for every $a, b \in \Lambda^k V^*$ and  $X,Y\in V$, \begin{equation*} \langle \iota_X a , \iota_Y b\rangle + \langle \iota_Y \star a , \iota_X \star b\rangle = \langle X, Y \rangle \langle a,b\rangle \end{equation*}
	\end{lemma}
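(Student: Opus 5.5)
The identity is bilinear in $(a,b)$ and in $(X,Y)$, so it suffices to check it on an orthonormal basis. Fix an orthonormal basis $(e_1,\dots,e_n)$ of $V$ and let $a=e^I$, $b=e^J$ be basis monomials, where $I,J$ are increasing multi-indices of length $k$. We take $X=e_p$ and $Y=e_q$. Recall that $\star e^I=\epsilon_I e^{I^c}$ with $\epsilon_I=\pm1$, and that the paper's convention makes $\langle e^I,e^J\rangle=\delta_{IJ}$ (up to a fixed normalisation constant that appears on both sides). The identity then reduces to a finite combinatorial check, which I will organise by cases as follows.

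\emph{Case $I\neq J$.} The right-hand side vanishes. The term $\iota_{e_p}e^I$ is nonzero only if $p\in I$, and then it equals $\pm e^{I\setminus p}$; likewise $\iota_{e_q}e^J=\pm e^{J\setminus q}$. These are equal up to sign only when $I\setminus p=J\setminus q$, with $p\neq q$, $p\in I\setminus J$ and $q\in J\setminus I$. In that situation the second term is also nonzero, since $p\notin I^c$ means $q\in I^c$, and $I^c\setminus q=J^c\setminus p$. I will therefore need to show that the two signs are opposite.

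\emph{Case $I=J$.} For $p\neq q$, both terms vanish: each is an inner product of distinct monomials. For $p=q$, exactly one of $p\in I$ or $p\in I^c$ holds. The corresponding term equals $\epsilon_I^2=1$, which matches $\langle e_p,e_p\rangle\langle e^I,e^I\rangle=1$.

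The main obstacle is the sign cancellation in the case $I\neq J$. I would avoid tracking permutation signs by hand and use an invariant formulation instead: the identity $\iota_X\star\omega=\star(\omega\wedge X^\flat)$ (up to a sign $(-1)^{k}$ depending on conventions). With it, the second term becomes
\[
\langle \star(a\wedge Y^\flat),\star(b\wedge X^\flat)\rangle=\langle a\wedge Y^\flat,\, b\wedge X^\flat\rangle,
\]
where the equality uses that $\star$ is an isometry and the two sign factors square away. The claim is then equivalent to the standard identity
\[
\langle a\wedge Y^\flat, b\wedge X^\flat\rangle=\langle X,Y\rangle\langle a,b\rangle-\langle \iota_X a,\iota_Y b\rangle .
\]
This follows from the adjointness of $\iota_X$ and $X^\flat\wedge\cdot$ together with the anticommutation relation $\iota_X(Y^\flat\wedge\cdot)+Y^\flat\wedge\iota_X(\cdot)=\langle X,Y\rangle\,\mathrm{id}$. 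Concretely, I expand $\langle b\wedge X^\flat, a\wedge Y^\flat\rangle=\langle b,\iota_X(a\wedge Y^\flat)\rangle$ up to the sign $(-1)^{2k}=1$, and then apply the Leibniz rule. This reduces the sign issue to one application of the Clifford-type relation, and I would carry out the proof in this form.
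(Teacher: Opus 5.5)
Your invariant argument is correct and is essentially the paper's proof: both use that $\star$ is an isometry intertwining interior and exterior multiplication, that $\iota_X$ is adjoint to $X^\flat\wedge\cdot$, and the relation $\iota_X(Y^\flat\wedge\cdot)+Y^\flat\wedge\iota_X=\langle X,Y\rangle\,\mathrm{id}$. The only difference is the direction of the computation: the paper rewrites the first term as $\langle X^\flat\wedge\star a,\,Y^\flat\wedge\star b\rangle$ and obtains the second term, whereas you rewrite the second term as $\langle a\wedge Y^\flat,\,b\wedge X^\flat\rangle$ and obtain the first, so the combinatorial case analysis you started with is not needed.
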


\begin{proof}
     		Identify the vectors $X,Y$ with the $1$-forms $\langle X, \cdot \rangle$ and $\langle Y, \cdot \rangle$ respectively. Recall that $\iota_X (\cdot)$ and $X\wedge (\cdot)$ are adjoints of each other. Notice that this implies the formula $\star\iota_X= X\wedge \star$. Therefore, since $\star$ is an isometry, we deduce the following chain of equalities \begin{align*} 
     		\langle \iota_X a \wedge  \iota_Y  b\rangle &=  \langle \star \iota_X a, \star \iota_Y b \rangle\\ 			
     		&= \langle X \wedge \star a, Y \wedge \star b\rangle\\ 			
     		& =   \langle \star a, \iota_X(Y \wedge \star b)\rangle\\  
     		&= \langle \star a, \langle X, Y \rangle \star b - Y\wedge \iota_X \star b)\rangle \\
     	& = \langle X, Y \rangle \langle a,b\rangle - \langle \iota_Y \star a , \iota_X \star b\rangle
     		\end{align*} and the proof is complete.
\end{proof}

\subsection{A Poisson equation on degenerating annuli}

Let $\alpha$ a positive real number that is not an integer. Denote by
$\mathcal{H}_{\alpha}$ the space of harmonic polynomials in $\mathbf{R}^4$ of
degree less than $\alpha - 1$ and
\begin{equation*} \overline{\mathcal{H}}_{\alpha} = \{ x \mapsto P (x) + | x |^{- 2} Q (x / |
x |^2), P, Q \in \mathcal{H}_{\alpha} \} . \end{equation*}
We define for $\lambda \in (0, 1)$, $x \in \mathrm{B}_1 \backslash
\mathrm{B}_{\lambda}$, $\omega_{\lambda} (x) = | x | + \lambda | x |^{- 1}$.

\begin{lemma}
	\label{harmonicestimate}$\overline{\mathcal{H}}_{\alpha}$ is the space of
	all harmonic functions $\vartheta$ in $\mathbf{R}^4 \backslash \{ 0 \}$ satisfying
	\begin{equation*} \sup_{x \neq 0} \omega_1 (x)^{- \alpha} | x \vartheta (x) | < + \infty \end{equation*}
	Moreover the map $\Phi: \overline{\mathcal{H}}_{\alpha} \rightarrow
	\mathcal{L} (\mathcal{H}_{\alpha}, \mathbf{R}^2)$ defined by
	\begin{equation*} \Phi (\vartheta) \varphi = \int_{\mathbf{S}^3} (\vartheta(x), \partial_{r} \vartheta(x)) \varphi(x)\diff x \end{equation*}
	is an isomorphism.
\end{lemma}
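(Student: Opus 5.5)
The plan is to use the spherical harmonic decomposition. Write a harmonic $\vartheta$ on $\R^4\setminus\{0\}$ in polar coordinates as $\vartheta(r\theta)=\sum_{\ell\ge 0}\sum_m c_{\ell m}(r)Y_{\ell m}(\theta)$, with $Y_{\ell m}$ an orthonormal basis of spherical harmonics of degree $\ell$ on $\Sph^3$. Since $\Delta_{\Sph^3}Y_{\ell m}=\ell(\ell+2)Y_{\ell m}$, harmonicity forces each coefficient to solve an Euler ODE. Hence
\begin{equation*}
c_{\ell m}(r)=p_{\ell m}r^\ell+q_{\ell m}r^{-\ell-2}.
\end{equation*}
The coefficients are recovered by $c_{\ell m}(r)=\int_{\Sph^3}\vartheta(r\theta)Y_{\ell m}(\theta)\,\diff\theta$. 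The term $r^\ell Y_{\ell m}$ is a harmonic polynomial of degree $\ell$, and $r^{-\ell-2}Y_{\ell m}=|x|^{-2}(r^\ell Y_{\ell m})(x/|x|^2)$ is its Kelvin transform. So the space of harmonic functions with only finitely many modes $\ell<\alpha-1$ is exactly $\overline{\mathcal H}_\alpha$.

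For the characterisation, take $\vartheta$ with $|x|\,|\vartheta(x)|\le C\omega_1(x)^\alpha$. Then $|c_{\ell m}(r)|\le C r^{-1}(r+r^{-1})^\alpha$. This is $O(r^{\alpha-1})$ as $r\to\infty$ and $O(r^{-\alpha-1})$ as $r\to 0$.
\begin{itemize}
\item Letting $r\to\infty$ kills $p_{\ell m}$ whenever $\ell>\alpha-1$.
\item Letting $r\to 0$ kills $q_{\ell m}$ whenever $\ell+2>\alpha+1$, that is again $\ell>\alpha-1$.
\end{itemize}
Since $\alpha$ is not an integer, equality $\ell=\alpha-1$ never occurs, so the borderline cases need no care. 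Only modes with $\ell<\alpha-1$ survive, which gives $\vartheta\in\overline{\mathcal H}_\alpha$. Conversely, each basis element $r^\ell Y$ or $r^{-\ell-2}Y$ with $\ell<\alpha-1$ satisfies $|x|\,|\cdot|\lesssim\omega_1^\alpha$. Indeed $r^{\ell+1}\le(r+r^{-1})^\alpha$ when $\ell+1<\alpha$, and symmetrically near $0$. The finite sum therefore lies in the weighted space.

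For the isomorphism, note that $\dim\overline{\mathcal H}_\alpha=2\dim\mathcal H_\alpha=\dim\mathcal L(\mathcal H_\alpha,\R^2)$. The sum $\mathcal H_\alpha\oplus(\text{Kelvin transforms})$ is direct because the radial behaviours $r^\ell$ and $r^{-\ell-2}$ are distinct. It therefore suffices to prove injectivity. Suppose $\Phi(\vartheta)=0$ and test against $\varphi=r^\ell Y_{\ell m}$, whose restriction to $\Sph^3$ is $Y_{\ell m}$. This gives $c_{\ell m}(1)=0$ and $c_{\ell m}'(1)=0$, i.e.
\begin{equation*}
p+q=0,\qquad \ell p-(\ell+2)q=0,
\end{equation*}
with $p=p_{\ell m}$, $q=q_{\ell m}$. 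The determinant of this system is $-(2\ell+2)\neq 0$, so $p=q=0$ for all surviving modes, and $\vartheta=0$.

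The only delicate point is justifying the mode expansion and the coefficient ODE for a general harmonic function on the punctured space; I expect this to be the main, though still routine, obstacle. I would handle it by projecting onto $Y_{\ell m}$: differentiating under the integral sign and using self-adjointness of $\Delta_{\Sph^3}$ gives the ODE for $c_{\ell m}$ directly. Smoothness away from the origin then makes the $L^2(\Sph^3)$ expansion converge locally uniformly, so vanishing of all but finitely many coefficients identifies $\vartheta$ with a finite sum.
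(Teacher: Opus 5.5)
Your proof is correct, but at the key step it takes a different route from the paper's. For the first part the paper gives no argument and only cites a Liouville-type theorem, namely the Laurent-type expansion of harmonic functions on punctured space. Your spherical-harmonic expansion, the Euler ODE $c''+\tfrac{3}{r}c'-\tfrac{\ell(\ell+2)}{r^2}c=0$, and the growth argument that kills the modes with $\ell>\alpha-1$ together give a self-contained proof of that cited fact.

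For the isomorphism, both arguments reduce to injectivity by counting dimensions, and then they diverge. The paper tests $\Phi(\vartheta)=0$ against the harmonic polynomials whose restrictions to $\Sph^3$ are $\vartheta_{|\Sph^3}$ and $\partial_r\vartheta_{|\Sph^3}$. It deduces that both Cauchy data of $\vartheta$ vanish on the unit sphere and concludes by unique continuation: $\vartheta\mathbf{1}_{\mathrm{B}_1}$ is then harmonic on $\R^4\setminus\{0\}$ and vanishes on an open set, hence vanishes identically by analyticity. You instead decouple the problem mode by mode and solve a $2\times 2$ system with determinant $-(2\ell+2)\neq 0$.

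The paper's route is shorter and conceptually uses only that Cauchy data on a sphere determine a harmonic function. It does tacitly need that $\vartheta_{|\Sph^3}$ and $\partial_r\vartheta_{|\Sph^3}$ are restrictions of elements of $\mathcal{H}_\alpha$. This holds because the Kelvin transform is the identity on $\Sph^3$ and $\partial_r$ acts by $\ell$ and $-(\ell+2)$ on the two pieces. Your route is fully explicit. It also justifies the directness of the sum behind the dimension count, and avoids both unique continuation and any external reference.
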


\begin{proof}
	The first part is a Liouville type result (see for instance \cite[Theorem 10.5]{axler2013harmonic}. For the second part, it is enough to check that $\ker \Phi = \{ 0 \}$ since
	$\Phi$ is a linear map between spaces of the same dimension. If $\vartheta \in \ker
	\Phi$, taking $\varphi = \vartheta_{|\mathbf{S}^3}$ and $\varphi = x^k \partial_k
	\vartheta_{|\mathbf{S}^3}$ we get
	\begin{equation*} \int_{\mathbf{S}^3 }  | \vartheta |^2 = \int_{\mathbf{S}^3} | \partial_{r} \vartheta
	|^2 = 0 \end{equation*}
	However this would imply that $w = \vartheta\mathbf{1}_{\text{B}_1}$ is harmonic
	in $\mathbf{R}^4 \backslash \{ 0 \}$ then by analyticity that $w = 0$ and
	finally that $\vartheta = 0$ which is the desired contradiction.
\end{proof}

\begin{proposition} \label{ellipticpdev0}
	There exists $\lambda_0 \in (0, 1)$ and $C > 0$ such that for all $\lambda \in
	(0, \lambda_0)$, for all $v \in \mathrm{W}^{1, 2}_{\mathrm{loc}} (\mathrm{B}_1
	\backslash \mathrm{B}_{\lambda})$ satisfying the following conditions
	\begin{enumerate}
		\item $| x |^3 | \Delta v | \leq \omega_{\lambda}^{\alpha}$ in
		$\mathrm{B}_1 \backslash \mathrm{B}_{\lambda}$ \label[hyp]{ci}
		
		\item $| x | | v | \leq 1$ in $\mathrm{B}_1 \backslash
		\mathrm{B}_{\lambda}$ \label[hyp]{cii}
		
		\item $\Phi \left( v_{| \partial \mathrm{B}_{\sqrt{\lambda}}} \right) =
		0$ \label[hyp]{ciii},
	\end{enumerate}
	we have for all $x \in \mathrm{B}_1 \backslash \mathrm{B}_{\lambda}$
	\begin{equation*} | x | | v | \leq C \omega_{\lambda}^{\alpha} . \end{equation*}
\end{proposition}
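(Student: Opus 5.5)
We argue by contradiction combined with compactness, which is the standard way to obtain such estimates on degenerating annuli. Suppose the statement fails. Then there exist sequences $\lambda_n\to 0$ and $v_n$ satisfying \cref{ci,cii,ciii} on $\mathrm{B}_1\setminus\mathrm{B}_{\lambda_n}$, with
\[
M_n \coloneq \sup_{\mathrm{B}_1\setminus \mathrm{B}_{\lambda_n}} \omega_{\lambda_n}^{-\alpha}|x||v_n| \to +\infty .
\]
By \cref{cii}, $M_n$ is finite. Choose points $x_n$ at which the supremum is almost attained, and set $r_n=|x_n|$. Since $\omega_{\lambda}\geq 2\sqrt{\lambda}$ and $|x||v_n|\le 1$, we have $M_n\le (2\sqrt{\lambda_n})^{-\alpha}$, but this bound alone does not localise $x_n$.

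We then rescale at scale $r_n$. Define $w_n(y)=M_n^{-1}\,\omega_{\lambda_n}(x_n)^{-\alpha}\, r_n\, v_n(r_n y)$ on the annulus $\mathrm{B}_{1/r_n}\setminus \mathrm{B}_{\lambda_n/r_n}$. Using the elementary comparison $\omega_{\lambda_n}(r_n y)\le \omega_{\lambda_n}(x_n)\max(|y|,|y|^{-1})$, we get the following three properties.

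First, the weighted bound $|y||w_n(y)|\le \max(|y|,|y|^{-1})^{\alpha}$ holds.

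Second, $|y|^3|\Delta w_n|\le M_n^{-1}\max(|y|,|y|^{-1})^{\alpha}\to 0$ locally uniformly.

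Third, $|y_n||w_n(y_n)|\geq 1/2$ at $y_n=x_n/r_n\in\Sph^3$.

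After extracting a subsequence, the domain exhausts one of three limits: $\R^4\setminus\{0\}$, a ball complement, or a punctured ball. Which one occurs depends on the behaviour of $r_n$ relative to $1$ and $\lambda_n$.

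The main case is $\lambda_n\ll r_n\ll 1$. Elliptic estimates give $\mathcal{C}^1_{\mathrm{loc}}$ convergence $w_n\to w$, where $w$ is harmonic on $\R^4\setminus\{0\}$ with $|x w|\le C\omega_1^{\alpha}$. By \cref{harmonicestimate}, $w\in\overline{\mathcal{H}}_\alpha$, and $w\neq 0$ because of the nondegeneracy at $y_\infty\in \Sph^3$.

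The key step, and what I expect to be the main obstacle, is to transfer \cref{ciii} from the sphere $\partial \mathrm{B}_{\sqrt{\lambda_n}}$ to the limit so as to obtain $\Phi(w)=0$. This then gives $w=0$ by the injectivity part of \cref{harmonicestimate}, a contradiction. The transfer is not immediate because $\sqrt{\lambda_n}/r_n$ need not stay bounded. To handle it, I would decompose $v_n=h_n+z_n$. Here $z_n$ solves the Poisson equation with the right-hand side $\Delta v_n$, and is controlled by an explicit weighted barrier or representation formula: an ODE argument on each spherical harmonic mode in the radial variable gives $|x||z_n|\lesssim \omega_{\lambda_n}^\alpha$, using that $\alpha$ is not an integer, so no resonance with the homogeneities $|x|^{k}$ and $|x|^{-2-k}$ occurs. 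The remainder $h_n$ is harmonic on the annulus.

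On the annulus, $h_n$ splits into its low modes, which lie in $\overline{\mathcal{H}}_\alpha$ and have degree $<\alpha-1$, and its high modes. For the high modes, the bound $|x||h_n|\lesssim 1$ on the whole annulus forces decay of order $\omega_{\lambda_n}^{\alpha}$ by the usual Hadamard-type three-annuli argument. For the low modes, condition \cref{ciii} together with the isomorphism $\Phi$ of \cref{harmonicestimate} shows that they are determined by the $z_n$ and high-mode contributions on $\partial\mathrm{B}_{\sqrt{\lambda_n}}$, which are already $O(\omega_{\lambda_n}^\alpha)$ there. A finite-dimensional rescaling then propagates the bound across the annulus; this uses that $|x|^{k}$ and $|x|^{-2-k}$, normalised at $\sqrt{\lambda}$, are dominated by $\omega_\lambda^\alpha/\omega_\lambda(\sqrt\lambda)^\alpha$ when $k<\alpha-1$.

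With this mode-wise argument in place, the blow-up and contradiction step becomes a consistency check. It handles the boundary cases as well: if $r_n\sim 1$ or $r_n\sim\lambda_n$, the limit is a harmonic function on a punctured ball or an exterior domain, and there the decaying behaviour forces it to vanish as before.
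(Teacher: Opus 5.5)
Your argument is correct in substance, but what actually carries it is the mode decomposition $v_n=z_n+h_n$. That decomposition proves the estimate directly and makes the blow-up redundant. The paper instead stays inside the compactness argument, where the obstacle you anticipate does not arise.

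Since (ii) gives $M_n\le 2\,\omega_{\lambda_n}(x_n)^{-\alpha}$, one has $\omega_{\lambda_n}(x_n)\to0$, i.e.\ $r_n\to0$ and $\lambda_n/r_n\to0$. So the cases $r_n\sim1$ and $r_n\sim\lambda_n$ never occur. Your reason for discarding them would not suffice: on a bounded punctured ball the power bound does not force vanishing, e.g.\ $y_1^2-y_2^2$ on $\mathrm{B}_1$ when $\alpha<3$.

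Instead of the crude bound $\max(|y|,|y|^{-1})^{\alpha}$, the paper keeps the exact ratio $\omega_{\lambda_n}(r_ny)/\omega_{\lambda_n}(r_n)$. This tends to $|y|$ if $r_n^2/\lambda_n\to\infty$ and to $|y|^{-1}$ if $r_n^2/\lambda_n\to0$. The limit then obeys $|w|\le|y|^{\alpha-1}$ or $|w|\le|y|^{-\alpha-1}$ on $\R^4\setminus\{0\}$, and vanishes because $\alpha\notin\mathbf{Z}$, with no use of (iii). Condition (iii) is needed only when $r_n/\sqrt{\lambda_n}\to\rho\in(0,\infty)$. Then $\partial\mathrm{B}_{\sqrt{\lambda_n}}$ rescales to the fixed sphere $\partial\mathrm{B}_{1/\rho}$, so (iii) passes to the limit by $\mathcal{C}^1_{\mathrm{loc}}$ convergence, and injectivity of $\Phi$ concludes.

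Your route buys an explicit, quantitative proof. Non-integrality of $\alpha$ appears as non-resonance, and (iii) appears as exactly killing the modes in $\overline{\mathcal{H}}_\alpha$. It does need details you left implicit:
\begin{itemize}
\item The bound $|x||z_n|\lesssim\omega_{\lambda_n}^{\alpha}$ holds only for a specific particular solution. For degree $k<\alpha-1$, variation of parameters must be based at $\sqrt{\lambda_n}$. For $k>\alpha-1$, the $r^k$-coefficient is integrated from the outer boundary and the $r^{-2-k}$-coefficient from the inner one. A Newtonian potential would leave low-mode terms of size $r^k$.
\item The constants must be summable in $k$.
\item $L^2(\Sph^3)$ bounds must be upgraded to pointwise bounds via interior estimates.
\end{itemize}
With that choice, the low modes of $z_n$ have zero Cauchy data on $\partial\mathrm{B}_{\sqrt{\lambda_n}}$. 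All modes of degree $k>\alpha-1$ are orthogonal on spheres to $\mathcal{H}_\alpha$. Hence (iii) forces the low part of $h_n$ to vanish exactly.

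The compactness proof is shorter and more robust, relying only on a Liouville theorem for the limit. Yours depends on the exact rotational symmetry of the flat Laplacian.
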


\begin{proof}
	We use a contradiction argument. Assume there exists a sequence
	$(\lambda_k)_k \subset (0, 1)$ and $\lim \lambda_k = 0$ and functions
	$(v_k)_k$ with $v_k \in \mathrm{W}^{1, 2}_{\mathrm{loc}} (\mathrm{B}_1
	\backslash \mathrm{B}_{\lambda_k})$ satisfying \cref{ci,cii,ciii} and
	\begin{equation*} M_k \coloneq \sup_{\text{B}_1 \backslash \text{B}_{\lambda_k}}
	\omega_{\lambda_k}^{- \alpha} | x | | v_k | \underset{k \rightarrow +
		\infty}{\rightarrow} + \infty \end{equation*}
	Consider $x_k \in \text{B}_1 \backslash \text{B}_{\lambda_k}$ realising the
	supremum, that is such that $r_k | v_k (x_k) | = M_k \omega_{\lambda_k}
	(x_k)^{\alpha}$, where $r_k = | x_k |$. By \cref{ci}, we have for all
	$\delta > 0$
	\begin{equation*} \sup_{\text{B}_1 \backslash \text{B}_{\delta} \cup \text{B}_{\lambda_k /
			\delta} \backslash \text{B}_{\lambda_k}} \omega_{\lambda_k}^{- \alpha} | x | | v_k |
	\leq \left(\inf_{\rho \in (\delta, 1) \cup (\lambda_k, \lambda_k / \delta)} (\rho
	+ \lambda_k \rho^{- 1})\right)^{- \alpha} \leq \delta^{- \alpha} \end{equation*}
	which implies that both inequalities $\limsup r_k > \delta$ and $\liminf r_k /
	\lambda_k < 1 / \delta$ are false. This in turn implies $r_k \rightarrow 0$
	and $\lambda_k / r_k \rightarrow 0$.
	
	\noindent Define now for $y \in \text{B}_{1 / r_k} \backslash \text{B}_{\lambda_k /
		r_k}$ the rescaled function $\overline{v}_k (y) = | v_k (x_k) |^{- 1} v_k
	(r_k y)$. The definition of $M_k$ translates as
	\begin{align}
		| y | | \overline{v}_k (y) |  \leq & \frac{r_k | y |}{M_k} \frac{| v_k
			(r_k y) |}{(r_k + \lambda_k r_k^{- 1})^{\alpha}}\notag\\
		 \leq & \frac{(r_k | y | + \lambda_k (r_k | y |)^{- 1})^{\alpha}}{(r_k +
			\lambda_k r_k^{- 1})^{\alpha}}\notag\\
		\leq & \left( | y | \frac{r_k}{r_k + \lambda_k r_k^{- 1}} + | y |^{- 1}
		\frac{\lambda_k r_k^{- 1}}{r_k + \lambda_k r_k^{- 1}} \right)^{\alpha}\label{vkest}
	\end{align}
	whereas assumption $\left( \text{\textit{i}} \right)$ gives \begin{align*}
	| y |^3 | \Delta \overline{v}_k | & \leq  | y |^3 r_k^2 | v_k (x_k)
|^{- 1} | (\Delta v_k) (r_k y) |\\
& \leq  \frac{| r_k y |^3}{M_k (r_k + \lambda_k r_k^{- 1})^{\alpha}}
| (\Delta v_k) (r_k y) |\\
& \leq  \frac{1}{M_k} (| y | + | y |^{- 1})^{\alpha}
	\end{align*}
	This means
	\begin{equation} \Delta \overline{v}_k  \underset{k \rightarrow + \infty}{\rightarrow} 0
	\text{ in } \text{L}^{\infty}_{\mathrm{loc}} (\mathbf{R}^4 \backslash \{ 0
	\}) \end{equation}
	and we deduce that there exists a harmonic function $\overline{v}_{\infty}$
	on $\mathbf{R}^4 \backslash \{ 0 \}$ such that
	\begin{equation} \overline{v}_k  \underset{k \rightarrow + \infty}{\rightarrow}
	\overline{v}_{\infty} \text{ in } \mathcal{C}^1_{\mathrm{loc}}
	(\mathbf{R}^4 \backslash \{ 0 \}). \end{equation} Moreover,
	since $\overline{v}_k (x_k / r_k) = 1$, we get in the limit
	\begin{equation} \inf_{\partial \text{B}_1}  | \overline{v}_{\infty} | \geq 1. \label{nonvanishingvlimit}\end{equation}
	Now we are in either one of the following three situations:\begin{itemize}
	    \item $\limsup {r_k}/{\sqrt{\lambda_k}} =
	+ \infty$. In this case, there is a subsequence satisfying $r_k^2 /\lambda_k
	\rightarrow + \infty$. According to \cref{vkest} for $y \in \text{B}_{1
		/ r_k} \backslash \text{B}_{\lambda_k / r_k}$
	\begin{equation} | y | | \overline{v}_k (y) | \leq \left( | y | \frac{r_k^2 \lambda_k^{-
			1}}{r_k^2 \lambda_k^{- 1} + 1} + | y |^{- 1} \frac{1}{r_k^2 \lambda_k^{-
			1} + 1} \right)^{\alpha} \end{equation} so we deduce that for all $y \in \mathbf{R}^4 \backslash \{ 0 \}, | y | |
	\overline{v}_{\infty} (y) | \leq | y |^{\alpha}$ that is
	\begin{equation} | \overline{v}_{\infty} (y) | \leq | y |^{\alpha - 1}. \end{equation}
	Since $\alpha$ is not an integer, the only possibility is $\overline{v}_\infty = 0$ which contradicts \cref{nonvanishingvlimit}.
	\item $\liminf {r_k}/{\sqrt{\lambda_k}} =
	0$. A similar argument yields \begin{equation} | \overline{v}_{\infty} (y) | \leq | y |^{-\alpha - 1} \end{equation}
for all $y \neq 0$, which is again a contradiction.
\item $r_k/\sqrt{\lambda_k}$ is bounded and bounded away from zero. This implies that, up to a subsequence, the ratio $r_k / \sqrt{\lambda_k}$
	converges to some $\rho \in (0, + \infty)$. From \cref{vkest}, we deduce 	\begin{equation} | y | | \overline{v}_{\infty} (y) | \leq (| y | + | y |^{- 1})^{\alpha}
	\end{equation} that is $ \overline{v}_{\infty} \in \overline{\mathcal{H}}_\alpha$ and assumption $\left(
	\text{\textit{iii}} \right)$ gives in the limit $\Phi \left(
	{\overline{v}_{\infty}}_{| \partial \mathrm{B}_{\rho}} \right) = 0$ so
	$\overline{v}_{\infty} = 0$ since $\Phi$ is one to one.
	\end{itemize}
\end{proof}

\begin{theorem} \label{ellipticPDEannulus}
	There exists $\lambda_0 \in (0, 1)$ and $C > 0$ such that for all $\lambda \in
	(0, \lambda_0)$, for all $v \in \mathrm{W}^{1, 2}_{\mathrm{loc}} (\mathrm{B}_1
	\backslash \mathrm{B}_{\lambda})$ satisfying the following conditions
	\begin{enumerate}
		\item $| x |^3 | \Delta v | \leq \omega_{\lambda}^{\alpha}$ in
		$\mathrm{B}_1 \backslash \mathrm{B}_{\lambda}$ \label{cci}
		
		\item $| x | | v | \leq \omega_{\lambda}^{\lfloor
			\alpha \rfloor}$ in $\mathrm{B}_1 \backslash \mathrm{B}_{\lambda}$
		\label{ccii}
	\end{enumerate}
	we have for all $x \in \mathrm{B}_1 \backslash \mathrm{B}_{\lambda}$
	\begin{equation*} | x | | v - \vartheta | \leq C \omega^{\alpha} \end{equation*}
	where $\vartheta$ is the unique element of $\overline{\mathcal{H}}_{\alpha}$ such
	that $\Phi \left( \vartheta_{| \partial \mathrm{B}_{\sqrt{\lambda}}} \right) = \Phi
	\left( v_{| \partial \mathrm{B}_{\sqrt{\lambda}}} \right)$.
\end{theorem}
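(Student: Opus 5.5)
We reduce \cref{ellipticPDEannulus} to \cref{ellipticpdev0}: subtract the harmonic part selected by $\Phi$ at the middle scale, then show the difference obeys the hypotheses of \cref{ellipticpdev0} up to a uniform constant.

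\emph{Step 1: choosing $\vartheta$.} By \cref{harmonicestimate}, $\Phi$ is an isomorphism from $\overline{\mathcal{H}}_\alpha$ onto $\mathcal{L}(\mathcal{H}_\alpha,\R^2)$. For the sphere $\partial\mathrm{B}_{\sqrt\lambda}$ we use the rescaled version: we pull back by $x\mapsto \sqrt\lambda\, x$ and pair the function and $r\partial_r$ of it against $\mathcal{H}_\alpha$ restricted to $\Sph^3$. Hence there is a unique $\vartheta\in\overline{\mathcal{H}}_\alpha$ with $\Phi(\vartheta_{|\partial\mathrm{B}_{\sqrt\lambda}})=\Phi(v_{|\partial\mathrm{B}_{\sqrt\lambda}})$. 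Set $w=v-\vartheta$. Since $\vartheta$ is harmonic, $|x|^3|\Delta w|=|x|^3|\Delta v|\le\omega_\lambda^\alpha$, and $\Phi(w_{|\partial\mathrm{B}_{\sqrt\lambda}})=0$. So hypotheses (i) and (iii) of \cref{ellipticpdev0} hold for $w$.

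\emph{Step 2: bounding $\vartheta$.} It remains to get (ii), namely $|x||w|\le C$ on the annulus, after which we apply \cref{ellipticpdev0} to $w/C$. Since $|x||v|\le\omega_\lambda^{\lfloor\alpha\rfloor}\le C$ (recall $\omega_\lambda\le 2$ on $\mathrm{B}_1\setminus \mathrm{B}_\lambda$), it suffices to prove $|x||\vartheta|\le C\,\omega_\lambda^{\lfloor\alpha\rfloor}$.

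Write $\vartheta$ in the natural basis of $\overline{\mathcal{H}}_\alpha$: homogeneous harmonic polynomials $P_m$ of degree $m<\alpha-1$, and their Kelvin transforms $|x|^{-2-2m}Q_m$. The coefficients are determined by $\Phi$ at scale $\sqrt\lambda$, i.e.\ by the moments of $v$ and $r\partial_r v$ on $\partial\mathrm{B}_{\sqrt\lambda}$.

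The moments of $v$ itself are controlled by $|x||v|\le\omega_\lambda^{\lfloor\alpha\rfloor}$. The moments of $r\partial_r v$ need a gradient bound. Interior elliptic estimates on dyadic annuli, using (i) and (ii), give $|x|^2|\nabla v|\le C\omega_\lambda^{\lfloor\alpha\rfloor}$.

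\emph{Step 3: the main obstacle, propagating the bound to all scales.} Knowing the coefficients at scale $\sqrt\lambda$ only gives the bound there. The key point is the separation of homogeneities. Because the radial factors $r^m$ and $r^{-2-m}$ are distinct, we can invert the $2\times 2$ radial system for each spherical-harmonic degree. Alternatively, we average the moments over the whole range of dyadic radii, not just the middle sphere.

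Done this way, the growing piece $P_m$ contributes at most $r^{m+1}$ times its size at scale $\sqrt\lambda$ rescaled. Likewise the decaying piece contributes at most $(\lambda/r)^{m+1}$ times its rescaled size. Both are then bounded by $\omega_\lambda^{\lfloor\alpha\rfloor}$ up to a constant, because $m+1\ge 1$ and the hypothesis already holds at all radii. Here one must check that the lower-degree modes ($m+1<\lfloor\alpha\rfloor$) are not forced to be large by the middle-scale data. We expect this to follow from the hypothesis $|x||v|\le\omega_\lambda^{\lfloor\alpha\rfloor}$ evaluated at radii near $1$ and near $\lambda$, where those modes would dominate. We also expect a compactness/contradiction argument mimicking \cref{ellipticpdev0} to work if the direct computation is messy.

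Finally, \cref{ellipticpdev0} applied to $w/C$ yields $|x||v-\vartheta|\le C\omega_\lambda^\alpha$.
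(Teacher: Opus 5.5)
\textbf{Gap in Step 3.} Your Steps 1--2 are exactly the paper's reduction. You subtract $\vartheta$, note that (i) and (iii) of \cref{ellipticpdev0} hold for $v-\vartheta$, and reduce (ii) to $|x||\vartheta|\le C\omega_\lambda^{\lfloor\alpha\rfloor}$ on the annulus. You also make explicit the gradient bound on $\partial\mathrm{B}_{\sqrt\lambda}$ needed to control the $r\partial_r v$ moments, which the paper uses tacitly.

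Step 3 is the only substantive step, and you leave it as an expectation with the wrong justification. The middle-scale bound does not propagate ``because $m+1\ge 1$''. It propagates because $m+1\le\lfloor\alpha\rfloor$ for every mode in $\overline{\mathcal{H}}_\alpha$, since degrees are $<\alpha-1$.

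Concretely, write $\vartheta=\sum_m (a_m r^m+b_m r^{-2-m})Y_m$. On $\partial\mathrm{B}_{\sqrt\lambda}$ you have $|v|+r|\partial_r v|\lesssim\lambda^{(\lfloor\alpha\rfloor-1)/2}$. Inverting your $2\times 2$ system gives $|a_m|\lesssim\lambda^{(\lfloor\alpha\rfloor-1-m)/2}$ and $|b_m|\lesssim\lambda^{(\lfloor\alpha\rfloor+1+m)/2}$. Hence
\[
r|a_m|r^m\lesssim\lambda^{\lfloor\alpha\rfloor/2}(r/\sqrt\lambda)^{m+1},\qquad r|b_m|r^{-2-m}\lesssim\lambda^{\lfloor\alpha\rfloor/2}(\sqrt\lambda/r)^{m+1}.
\]
For $r\ge\sqrt\lambda$ the first is $\le r^{\lfloor\alpha\rfloor}$ precisely because $m+1\le\lfloor\alpha\rfloor$. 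Symmetrically, the second is $\le(\lambda/r)^{\lfloor\alpha\rfloor}$ for $r\le\sqrt\lambda$. The other two cases only need $m+1\ge 0$.

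So the lower-degree modes you single out are the ones with the most room. The modes that would break the estimate are those with $m+1>\lfloor\alpha\rfloor$, and $\overline{\mathcal{H}}_\alpha$ excludes them by definition.

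Your fallback, evaluating $|x||v|\le\omega_\lambda^{\lfloor\alpha\rfloor}$ at radii near $1$ or near $\lambda$, is circular. $\vartheta$ is fixed by the data on $\partial\mathrm{B}_{\sqrt\lambda}$ alone. Nothing relates $\vartheta$ to $v$ at other radii until \cref{ellipticpdev0} has been applied, and that is what you are trying to set up. Averaging moments over all dyadic radii has the same problem: it would either define a different harmonic function from the one the statement prescribes, or again require comparing $v$ and $\vartheta$ off the middle sphere.

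\textbf{How the paper closes this step.} It uses three facts about $\overline{\mathcal{H}}_\alpha$:
\begin{itemize}
\item it is finite-dimensional;
\item it is invariant under $\vartheta\mapsto\rho\,\vartheta(\rho\,\cdot)$;
\item $\sup_{x\neq 0}\omega_1(x)^{-\lfloor\alpha\rfloor}|x\vartheta(x)|$ is finite on it, which is where the degree restriction enters.
\end{itemize}
Hence this quantity is a norm equivalent to $\Vert\Phi(\vartheta)\Vert$. Apply this to $\sqrt\lambda\,\vartheta(\sqrt\lambda\,\cdot)$. Its $\Phi$-data equal those of $\sqrt\lambda\,v(\sqrt\lambda\,\cdot)$ and are $O(\lambda^{\lfloor\alpha\rfloor/2})$. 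Using $\sqrt\lambda\,\omega_1(x/\sqrt\lambda)=\omega_\lambda(x)$, this gives $|x||\vartheta|\le C\omega_\lambda^{\lfloor\alpha\rfloor}$ everywhere, and hence (ii) for $v-\vartheta$ up to a constant. With either this argument or the corrected mode count inserted, your proof is complete.
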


\begin{proof}
	We apply \cref{ellipticpdev0} to $v - \vartheta$ which satisfies \cref{ci,ciii} trivially. 

\noindent Consider the norm $\| \cdot \|$ on $\mathcal{H}_{\alpha}$ defined by $\|
	\varphi \| = \sup_{\mathbf{S}^3} (| \varphi | + | \partial_{r} \varphi |)$
	and denote by $\Vert \cdot \Vert$ the associated operator norm
	for linear maps $\mathcal{H}_{\alpha} \rightarrow \mathbf{R}^2$. Since
	$\Phi$ is an isomorphism, the map $\vartheta \mapsto \Vert \Phi (\vartheta)
	\Vert$ defines a norm on $\overline{\mathcal{H}_{\alpha}}$ which is
	equivalent to $\vartheta \mapsto \sup_{x \neq 0} \omega_1 (x)^{- \lfloor \alpha
		\rfloor} | x \vartheta (x) |$ since $\overline{\mathcal{H}_{\alpha}}$ is finite
	dimensional. In particular for all $x \neq 0$
	\begin{equation*} \omega_1 (x)^{- \lfloor \alpha \rfloor} \left| \sqrt{\lambda} x \vartheta \left(
	\sqrt{\lambda} x \right) \right| \leq C \Vert \Phi \left(
	\sqrt{\lambda} \vartheta_{| \partial \mathrm{B}_{\sqrt{\lambda}}} \right)
	\Vert = C \Phi \left( \sqrt{\lambda} v_{| \partial
		\mathrm{B}_{\sqrt{\lambda}}} \right) \leq C \lambda^{\lfloor \alpha
		\rfloor / 2} \end{equation*}
	that is
	\begin{equation*} | x \vartheta (x) | \leq C \lambda^{\lfloor \alpha \rfloor / 2} \left( \frac{| x
		|}{\sqrt{\lambda}} + \frac{\sqrt{\lambda}}{| x |} \right)^{\lfloor \alpha
		\rfloor} = C \omega_{\lambda}^{\lfloor \alpha \rfloor} \end{equation*}
	and in particular, for $x \in \mathrm{B}_1 \backslash \mathrm{B}_{\lambda}$,
	\begin{equation*} | x | | v (x) - \vartheta (x) | \leq C \end{equation*}
	which means that \cref{cii} is satisfied (up to a multiplicative
	constant).
\end{proof}

\begin{remark} \label{rkellipticPDEannulus}
	Elliptic regularity estimates yield, under the hypothesis of \cref{ellipticPDEannulus}, for $x\in \mathrm{B}_{1/2}\backslash \mathrm{B}_{2\lambda}$,
	\begin{equation*} | x | | v -\vartheta | + | x |^2 | \nabla v -\nabla \vartheta| \leq C \omega^{\alpha} . \end{equation*}
\end{remark}

\begin{lemma} \label{flatornot}
	There exists $C > 0$ such that, if $h$ is a metric on the unit ball with
	\begin{equation*} | h - \xi | + | x | | \nabla h | \leq | x |^2, \end{equation*}
	where $\xi$ is the flat metric, then for any $1$-form $A$ such that, in
	$\mathrm{B}_1 \backslash \mathrm{B}_{\lambda}$, $\lambda \in (0, 1)$,
	\begin{equation*} | x |^2 | \nabla A | + | x |^3 | \nabla^2 A | \leq \omega^2 \end{equation*}
	we have
	\begin{equation*} | x |^3 | \Delta_{\xi} A - \Delta_h A | \leq C \omega^4 . \end{equation*}
\end{lemma}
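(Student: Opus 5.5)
Since $\Delta_h$ acting on $1$-forms is a second-order operator whose coefficients are smooth functions of $h$, $h^{-1}$ and their first two derivatives, the plan is to expand $\Delta_h - \Delta_\xi$ in coordinates and estimate each resulting term from the two hypotheses. Using the Weitzenböck formula $\Delta_h A = \nabla^{*}\nabla A + \mathrm{Ric}_h(A)$, or directly $\Delta_h = \diff\diff^{*_h} + \diff^{*_h}\diff$ with $\diff^{*_h}$ in coordinates, I would write
\begin{equation*}
\Delta_h A - \Delta_\xi A = (h^{\alpha\beta}-\xi^{\alpha\beta})\,\partial_\alpha\partial_\beta A + \mathcal{Q}_1(h^{-1},\partial h)\ast \partial A + \mathcal{Q}_2(h^{-1},\partial h,\partial^2 h)\ast A,
\end{equation*}
where $\mathcal{Q}_1$ is linear in $\partial h$ and $\mathcal{Q}_2$ is built from $\partial^2 h$ and $(\partial h)^2$. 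The hypothesis $|h-\xi|\le |x|^2$ gives $|h^{-1}-\xi|\le C|x|^2$ on the unit ball (after shrinking, $h$ is uniformly equivalent to $\xi$), and $|x||\nabla h|\le|x|^2$ gives $|\partial h|\le |x|$.

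Term by term, multiplying by $|x|^3$: the principal part gives $|x|^3\cdot |x|^2|\nabla^2A|\le |x|^2\omega^2$; the first-order part gives $|x|^3\cdot|x|\,|\nabla A|\le |x|^2\omega^2$. Since $\omega\ge |x|$, both are $\le \omega^4$. The zero-order part is the delicate one: it contains $\partial^2 h$, which is not directly controlled by the stated hypothesis, and $A$ itself, which is controlled only through $\nabla A$. In the application (\cref{flatornot} is used for $A_k$ in the gauge of \cref{Rivieregauge}), $h$ is smooth in geodesic coordinates, so $|\partial^2 h|\le C$; I would interpret the hypothesis as including this (it is the natural meaning for a metric with $h=\xi+\gamma+O(r^3)$). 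For $A$, I would either invoke the bound $|x||A|\le \omega^2$ available from \cref{Rivieregauge}, or recover it by integrating $|\nabla A|\le |x|^{-2}\omega^2$ radially from $\partial\mathrm{B}_1$, up to a constant $A(x_0)$ which can be gauged away. In either case $|x|^3|\mathcal{Q}_2 A|\le C|x|^3|A|\le C|x|^2\omega^2\le C\omega^4$.

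The main obstacle is exactly this zero-order term: the statement as written controls neither $\partial^2 h$ nor $A$ itself. I would resolve it by making explicit that $h$ is smooth with bounded second derivatives, and that $A$ satisfies the same scale-invariant bound $|x||A|\le\omega^2$ as in \cref{Rivieregauge}. Alternatively, I would note that if $\Delta$ is meant as the rough Laplacian acting componentwise on coefficients in the coordinate frame, then no curvature term appears and only the $\partial h\ast\partial A$ and $(h^{-1}-\xi)\partial^2 A$ terms remain, which are handled above. Summing the estimates then gives $|x|^3|\Delta_\xi A-\Delta_h A|\le C\omega^4$.
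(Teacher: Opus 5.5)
Your estimates of the principal part $(h^{\alpha\beta}-\xi^{\alpha\beta})\partial_\alpha\partial_\beta A$ and of the first-order part $\partial h\ast\partial A$ are correct. Your diagnosis is also right: if $\Delta_h$ means the full Hodge (or Bochner) Laplacian, the difference contains a term $(\partial^2 h,(\partial h)^2)\ast A$ that the stated hypotheses do not control. With that reading the estimate is actually false: a constant-coefficient form $A=c\,\diff x^1$ with $c$ large already violates it for generic $h$. However, you close the argument by adding hypotheses ($|\partial^2h|\le C$, $|x||A|\le\omega^2$) or by switching to the componentwise scalar Laplace--Beltrami operator, so what you prove is a different lemma.

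The missing idea is the choice of operator. The paper compares only $\diff^{*_h}\diff A$ with $\diff^{*_\xi}\diff A$. This is the operator the Yang--Mills equation controls, and in a Coulomb gauge $\diff^{*_\xi}A=0$ one has $\diff^{*_\xi}\diff A=\Delta_\xi A$. For this operator there is no zero-order term at all: with $\Omega=\diff A$,
\begin{equation*}
\diff^{*_h}\Omega=-\tfrac12\,\partial_k\Omega_{ij}\,\star_h\bigl(\diff x^k\wedge\star_h(\diff x^i\wedge\diff x^j)\bigr)+\tfrac12\,\Omega_{ij}\,\diff^{*_h}(\diff x^i\wedge\diff x^j).
\end{equation*}
The first coefficient depends algebraically on $h$, so it differs from its flat value by at most $C|h-\xi|$. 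The second depends only on $h,\partial h$ and vanishes for $h=\xi$, so it is at most $C(|h-\xi|+|\nabla h|)$. Since $\Omega$ and $\partial\Omega$ involve only $\nabla A$ and $\nabla^2A$, you get
\begin{equation*}
|x|^3\,|\diff^{*_h}\diff A-\diff^{*_\xi}\diff A|\le C|x|^3\bigl(|\nabla^2A||h-\xi|+|\nabla A|(|h-\xi|+|\nabla h|)\bigr)\le C|x|^2\omega^2\le C\omega^4.
\end{equation*}
This uses exactly the stated hypotheses, which is why neither $\partial^2h$ nor $A$ itself appears in the lemma.

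As for your two fallbacks: the componentwise scalar Laplace--Beltrami operator does satisfy the estimate under the stated hypotheses. But it is neither $\diff^{*_h}\diff$ nor the Hodge Laplacian, so it does not connect to the Yang--Mills equation used in the proof of \cref{asymptoticneckexpansion}. The strengthened-hypothesis version could be pushed through in that application, since $h$ is smooth and \cref{Rivieregauge} gives $r|A_k|\le C\omega_{\lambda_k}^2$. You would then, however, also have to bound $\diff\diff^{*_h}A_k$, which the Yang--Mills equation does not give you and which is again a $\diff^{*_h}-\diff^{*_\xi}$ comparison. Restricting to $\diff^*\diff$ from the start avoids all of this.
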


\begin{proof}
	For any 2-form $\Omega$,
	\begin{eqnarray*}
		\diff^{\ast_h} \Omega & = & - \frac{1}{2} \star_h \diff \star_h
		(\Omega_{i j} \diff x^i \wedge \diff x^j)\\
		& = & - \frac{1}{2} \star_h (\partial_k \Omega_{i j} \diff x^k \wedge
		\star_h (\diff x^i \wedge \diff x^j) + \Omega_{i j} \diff \star_h
		(\diff x^i \wedge \diff x^j))\\
		& = & - \frac{1}{2} \partial_k \Omega_{i j} \star_h (\diff x^k \wedge
		\star_h (\diff x^i \wedge \diff x^j)) + \frac{1}{2} \Omega_{i j}
		\diff^{\ast_h} (\diff x^i \wedge \diff x^j)
	\end{eqnarray*}
	It is clear that
	\begin{eqnarray*}
		| \partial_k \Omega_{i j} \star_h (\diff x^k \wedge \star_h (\diff x^i
		\wedge \diff x^j)) - \partial_k \Omega_{i j} \star_{\xi} (\diff x^k
		\wedge \star_{\xi} (\diff x^i \wedge \diff x^j)) | & \leq & C | \nabla
		\Omega | | h - \xi |\\
		| \Omega_{i j} \diff^{\ast_h} (\diff x^i \wedge \diff x^j) - \Omega_{i
			j} \diff^{\ast_{\xi}} (\diff x^i \wedge \diff x^j) | & \leq & C |
		\Omega | (| h - \xi | + | \nabla h |)
	\end{eqnarray*}
	therefore for $A$ as in the statement of the lemma,
	\begin{eqnarray*}
		| x |^3 | \diff^{\ast_h} \diff A - \diff^{\ast_{\xi}} \diff A | & \leq
		& C (| \nabla^2 A | | h - \xi | + | \nabla A | (| h - \xi | + | \nabla h
		|))\\
		& \leq & C \omega^2 | x |^2\\
		& \leq & C \omega^4 .
	\end{eqnarray*}
	
\end{proof}

	\addcontentsline{toc}{section}{References}
\printbibliography

\end{document}